\DeclareMathAlphabet{\mathbx}{U}{bbold}{m}{n}
\colorlet{shadecolor}{green!20}
\let\csname[\endcsname\relax
\let\csname]\endcsname\relax
\DeclareRobustCommand\csname[\expandafter\endcsname\expandafter{%
    \csname begin\endcsname{equation}%
  }%
\DeclareRobustCommand\csname]\expandafter\endcsname\expandafter{%
    \csname end\endcsname{equation}%
  }%
\declaretheoremstyle[
        headformat=\NUMBER.\hskip.5ex\NAME\NOTE,
        spaceabove=2ex,
        headfont=\bfseries,
        bodyfont=\itshape
        ]{mystyle}
\declaretheoremstyle[
        numbered=no,
        spaceabove=1ex,
        bodyfont=\itshape
        ]{mystyleempty}
\declaretheoremstyle[
        headformat={{\bfseries\NUMBER.}\ {\itshape\NAME}\NOTE},
        spaceabove=1ex,
        headpunct={.},
        headfont=\itshape,
        bodyfont=\normalfont
        ]{mystyleplain}
\declaretheoremstyle[
        spaceabove=1ex,
        headpunct={.},
        headfont=\bfseries,
        bodyfont=\itshape
        ]{mystylespecial}
\declaretheorem[sibling=para, style=mystyle]{Lemma}
\declaretheorem[style=mystyleempty, name=Lemma]{Lemma*}
\declaretheorem[sibling=para, style=mystyle]{Proposition}
\declaretheorem[style=mystyleempty, name=Proposition]{Proposition*}
\declaretheorem[style=mystyleempty, name=Corollary]{Corollary*}
\newcounter{theorem}
\declaretheorem[numberlike=theorem, name=Theorem]{Theorem}
\newlist{thmlist}{enumerate}{1}
\setlist[thmlist]{
        nolistsep,
        ref={\mdseries\textup{(\emph{\roman*})}},
        label={\mdseries\textup{(\emph{\roman*})}},
        before={\advance\mathindent\leftmargin}
        }
\newcommand\thmitem[1]{\textup{(\emph{\romannumeral #1})}}
\newlength{\@thlabel@width}
\newcommand{\fixhspace}{%
        \settowidth{\@thlabel@width}{\hskip0.5em\itshape1.}%
        \sbox{\@labels}{%
                \unhbox\@labels%
                \hspace{\dimexpr-\leftmargin+\labelsep+\@thlabel@width-\itemindent}
                }
        }
\renewenvironment{proof}[1][\proofname]{\par
  \pushQED{\qed}%
  \normalfont \topsep1ex%
  \trivlist
  \item[\hskip\labelsep
        \itshape
    #1\@addpunct{.}]\ignorespaces
}{%
  \popQED\endtrivlist\@endpefalse
}
\newlist{tfae}{enumerate}{1}
\setlist[tfae]{
        nolistsep,
        ref={\mdseries\textup{(\emph{\alph*})}},
        label={\mdseries\textup{(\emph{\alph*})}},
        before={\advance\mathindent\leftmargin}}
\newcommand\tfaeitem[1]{{\textup{(\emph{\@alph #1})}}}
\newcommand\claim[2][.8]{%
  \begin{minipage}{#1\displaywidth}%
  \itshape
  #2
  \end{minipage}%
}
\renewcommand\tableofcontents{%
  \begin{center}
  \begin{minipage}{.6\textwidth}
  \footnotesize
  \@starttoc{toc}
  \end{minipage}
  \end{center}
}
\DeclareFontFamily{OT1}{slmss}{}
\DeclareFontShape{OT1}{slmss}{m}{n}
     {<-8.5> s*[1.1] rm-lmss8
      <8.5-9.5> s*[1.1] rm-lmss9
      <9.5-11> s*[1.1] rm-lmss10
      <11-15.5> s*[1.1] rm-lmss12
      <15.5-> s*[1.1] rm-lmss17
     }{}
\DeclareSymbolFont{sfoperators}{OT1}{slmss}{m}{n}
\DeclareSymbolFontAlphabet{\mathsf}{sfoperators}
\def\operator@font{\mathgroup\symsfoperators}
\newcommand\NN{\mathbb{N}}
\newcommand\ZZ{\mathbb{Z}}
\renewcommand\epsilon{\varepsilon}
\newcommand\D{\mathscr{D}}
\newcommand\id{\mathrm{id}}
\newcommand\U{\mathscr{U}}
\DeclarePairedDelimiter\lin{\langle}{\rangle}
\newcommand\something[1]{\fbox{$#1$}\,}
\DeclareMathOperator{\gr}{gr}
\DeclareMathOperator{\ad}{ad}
\DeclareMathOperator{\gldim}{gldim}
\DeclareMathOperator{\pdim}{pdim}
\DeclareMathOperator{\GL}{GL}
\DeclareMathOperator{\Ext}{Ext}
\DeclareMathOperator{\Der}{Der}
\DeclareMathOperator{\InnDer}{InnDer}
\DeclareMathOperator{\Aut}{Aut}
\DeclareMathOperator{\End}{End}
\DeclareMathOperator{\Exp}{Exp}
\newcommand\lmod[1]{{}_{#1}\mathsf{Mod}}
\newcommand\rmod[1]{\mathsf{Mod}_{#1}}
\newcommand\N{\mathscr{N}}
\renewcommand\O{\mathscr{O}}
\newcommand\s{\mathsf{s}}
\newcommand\B{\mathbf{B}}
\renewcommand\AA{\mathbb{A}}
\newcommand\HH{H\!H}
\newcommand\HC{H\!C}
\renewcommand\H{\mathscr{H}}
\newcommand\kk{\Bbbk}
\newcommand\place{\mathord-}
\newcommand\X{\mathfrak{X}}
\newcommand\pt[1]{_{(#1)}}
\newcommand\A{\mathcal{A}}
\newcommand\F{\mathcal{F}}
\renewcommand\P{\mathbf{P}}
\let\oldS\S
\renewcommand\S{\mathscr{S}} 
\newcommand\xx{\hat{x}}
\newcommand\yy{\hat{y}}
\newcommand\EE{\hat{E}}
\newcommand\DD{\hat{D}}
\newcommand\w{\mathsf{w}}
\newcommand\newterm[1]{\textit{#1}}
\title[Algebras of differential operators]{Hochschild cohomology of
algebras of differential operators tangent to a central arrangement of
lines}
\author{Francisco Kordon}
\address{Departamento de Matem\'atica, Facultad de Ciencias Exactas y
Naturales, Universidad de Buenos Aires. Ciudad Universitaria,
Pabell\'on I (1428) Ciudad de Buenos Aires, Argentina.}
\email{fkordon@dm.uba.ar}
\author{Mariano Suárez-Álvarez}
\address{Departamento de Matem\'atica, Facultad de Ciencias Exactas y
Naturales, Universidad de Buenos Aires. Ciudad Universitaria,
Pabell\'on I (1428) Ciudad de Buenos Aires, Argentina.}
\email{mariano@dm.uba.ar}
\date{July 25, 2018}
\thanks{IMAS (CONICET) -- Universidad de Buenos Aires. This work has been
supported by the projects UBACYT 20020130100533BA, PIP-CONICET 112–201501–
00483CO, PICT 20150366 and MATHAMSUD-REPHOMOL}
\subjclass[2010]{Primary 16E40; Secondary 14N20}
\begin{document}

\begin{abstract}
Given a central arrangement of lines~$\A$ in a $2$-dimensional vector
space~$V$ over a field of characteristic zero, we study the algebra
$\D(\A)$ of differential operators on~$V$ which are logarithmic along~$\A$.
Among other things we determine the Hochschild cohomology of~$\D(\A)$ as a
Gerstenhaber algebra, establish a connection between that cohomology and the
de Rham cohomology of the complement~$M(\A)$ of the arrangement, determine
the isomorphism group of~$\D(\A)$ and classify the algebras of that form up
to isomorphism.
\end{abstract}

\maketitle

Let us fix a ground field~$\kk$ of characteristic zero, a vector space~$V$
and a central arrangement of hyperplanes~$\A$ in~$V$. We let $S$ be
the algebra of polynomial functions of~$V$, fix a defining polynomial~$Q\in
S$ for~$\A$, and consider, following K.~Saito~\cite{Saito}, the Lie algebra
  \[
  \Der(\A) = \{\delta\in\Der(S):\delta(Q)\in QS\}
  \]
of derivations of~$S$ logarithmic with respect to~$\A$, which is,
geometrically speaking, the Lie algebra of vector fields on~$V$ which are
tangent to the hyperplanes of~$\A$. This Lie algebra is a very interesting
invariant of the arrangement and has been the subject of a lot of work~---~we refer to the book of P.~Orlik and H.~Terao \cite{OT} and the one by
A.~Dimca~\cite{Dimca} for surveys on this subject. In particular,
using this Lie algebra we can define an important class of arrangements: we say
that an arrangement $\A$ is \newterm{free} if $\Der(\A)$ is free as a left
$S$-module. For example, central arrangements of lines in the plane are
free, as are, according to a beautiful result of Terao~\cite{Terao}, the
arrangements of reflecting hyperplanes of a finite group generated by
pseudo-reflection

Now, along with $\Der(\A)$ we can consider also the associative algebra~$\D(\A)$
generated inside the algebra $\End_\kk(S)$ of linear endomorphisms of the
vector space~$S$ by $\Der(\A)$ and the set of maps given by left
multiplication by elements of~$S$: we call it the \newterm{algebra of
differential operators tangent} to the arrangement~$\A$. When $\A$ is free,
this coincides with the algebra of differential operators on~$S$ which
preserve the ideal~$QS$ of~$S$ and all its powers, studied for example by
F.\,J.~Calderón-Moreno \cite{CM} or by the second author
in~\cite{SA:idealizer}.

The purpose of this paper is to study, from the point of view of
non-commutative algebra and homological algebra, this algebra~$\D(\A)$ in
the simplest case of free arrangement, that of central line arrangements.

\bigskip

Let us describe briefly our results. We thus assume in what follows that
$\A$ is a central arrangement of $r+2$ lines in a $2$-dimensional vector
space~$V$, and for simplicity we suppose that $\A$ has at least five lines,
so that $r\geq3$. We let $Q\in S$ be a defining polynomial for~$\A$, that is,
a square-free product of linear forms on~$V$ with the union of the
hyperplanes of~$\A$ as zero locus. As $S$ is a subalgebra of~$\D(A)$, we
view $Q$ as an element of the latter.

\begin{Theorem}
The algebra $\D(\A)$ is a noetherian domain, it has global dimension~$4$
and projective dimension as a bimodule over itself also equal to~$4$. The
Hochschild cohomology~$\HH^\bullet(\D(\A))$ of~$\D(\A)$ has Hilbert series
  \[
  \sum_{i\geq0} \dim\HH^i(\D(\A))\cdot t^i
        = 1 + (r+2)t + (2r+3)t^2 + (r+2)t^3.
  \]
The algebra~$\D(\A)$ has Hochschild homology and cyclic homology
isomorphic to those of a polynomial algebra $\kk[X]$, and periodic homology and
higher $K$-theory isomorphic to that of the ground field~$\kk$. It is a twisted
Calabi--Yau algebra of dimension~$4$, the element $Q$ of~$\D(A)$ is normal, 
and the modular automorphism $\sigma:\D(\A)\to\D(\A)$ of~$\D(\A)$ is the
unique one such that for all $a\in\D(\A)$ one has
  \[
  Qa=\sigma(a)Qa.
  \]
\end{Theorem}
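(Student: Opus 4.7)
The plan is to reduce all stated invariants to a single homological computation, namely that of a carefully constructed projective $\D(\A)$\nbd bimodule resolution of length~$4$. Saito's criterion applied to the line arrangement $\A$ furnishes a free basis of $\Der(\A)$ as a left $S$\nbd module consisting of the Euler derivation $E$ of polynomial degree $1$ and a second homogeneous derivation of polynomial degree $r+1$, with Lie bracket determined by the weights. Since $\Der(\A)$ is a free $S$\nbd module of rank $2$ and a Lie subalgebroid of $\Der(S)$, we may view $\D(\A)$ as the universal enveloping algebra of this Lie algebroid, and realizing it as an iterated Ore extension in the variables $x,y,E,D$ yields a PBW theorem and so the noetherian-domain claim.

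Next, I would construct a mixed Koszul--Chevalley--Eilenberg bimodule resolution
\begin{equation*}
0 \to P_4 \to P_3 \to P_2 \to P_1 \to P_0 \to \D(\A) \to 0
\end{equation*}
whose term $P_k$ is the direct sum, over indices $p+q=k$ with $0 \le p,q \le 2$, of $\D(\A) \otimes \wedge^p V \otimes \wedge^q \Der(\A) \otimes \D(\A)$. The differential combines the usual Koszul differential associated with $S \hookrightarrow \D(\A)$ with a Chevalley--Eilenberg-type differential for the Lie-algebroid action of $\Der(\A)$. Acyclicity is checked by passing to the order filtration and reducing to the classical Koszul complex of the commutative polynomial ring $\gr \D(\A) \cong S[E,D]$. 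This gives $\pdim_{\D(\A)^e} \D(\A) \le 4$, and the equality $\gldim \D(\A) = 4$ follows by exhibiting a nonvanishing class in top degree.

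Applying $\operatorname{Hom}_{\D(\A)^e}(\place, \D(\A))$ and $\place \otimes_{\D(\A)^e} \D(\A)$ to this resolution, and unravelling the adjunctions, turns the computation of Hochschild cohomology and homology into bookkeeping: the resulting complex carries a block decomposition indexed by $(p,q)$, and the differentials are governed by multiplication by $V$ and by the Lie-algebroid action of $\Der(\A)$ on $\D(\A)$. A careful count using the weight grading and the explicit bracket relation recovers the Hilbert series $1 + (r+2)t + (2r+3)t^2 + (r+2)t^3$ for $\HH^\bullet(\D(\A))$; the dimensions in degrees $1$ and $3$ reflect the de Rham cohomology of the complement $M(\A)$ suggested by the abstract. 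The dual computation gives $\HH_\bullet(\D(\A)) \cong \HH_\bullet(\kk[X])$, concentrated in degrees $0$ and $1$, and the Connes SBI sequence then produces cyclic, negative cyclic and periodic homology; higher $K$\nbd theory follows via Goodwillie's theorem together with vanishing in relevant degrees.

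Finally, the twisted Calabi--Yau property amounts to showing $\Ext^i_{\D(\A)^e}(\D(\A),\D(\A)^e) = 0$ for $i < 4$ and that $\Ext^4$ is a rank-one invertible bimodule $\D(\A)_\sigma$; both come from the same resolution with coefficients in $\D(\A)^e$. To identify $\sigma$ directly, observe that conjugation by the element~$Q$ maps any differential operator tangent to $\A$ to another such operator, so there is an algebra automorphism $\sigma$ characterized by $Qa = \sigma(a)Q$ for all $a \in \D(\A)$; matching bimodule structures in top $\Ext$ shows that this~$\sigma$ is the modular automorphism. The main obstacle in the program is the explicit determination of the Hochschild differentials in degree $2$: both $V$ and $\Der(\A)$ contribute there, the bracket $[E,D] = rD$ mixes the two kinds of contribution with the $S$\nbd module structure, and the precise kernel-image count producing $2r+3$ — rather than some cleaner guess — must be carried out by hand.
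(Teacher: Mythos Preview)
Your plan is essentially the paper's own strategy: realize $\D(\A)$ as an iterated Ore extension of $S$ (giving the noetherian-domain and PBW claims), build a length-$4$ free bimodule resolution whose underlying graded object is the Koszul complex on the four generators $x,y,D,E$, prove exactness by passing to the associated graded, and then run everything---Hochschild cohomology, the $\Ext^\bullet_{A^e}(A,A^e)$ computation for twisted Calabi--Yau, the identification of $\sigma$ with conjugation by $Q$---through that one resolution, using the weight grading coming from $\ad(E)$ to cut the complexes down to something finite.  Your Lie-algebroid packaging of the resolution is exactly the reorganization the authors say they will present in a sequel; unwound, it is the same complex.

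Two points where your outline diverges from the paper deserve comment.  For Hochschild homology the paper does \emph{not} compute directly from the resolution: instead it observes that the action of the inner derivation $\ad(E)$ on $\HH_\bullet(A)$ is zero (Loday), so the homology is concentrated in internal degree~$0$; since $A$ is non-negatively graded with $A_0=\kk[E]$, this immediately gives $\HH_\bullet(A)\cong\HH_\bullet(\kk[E])$ and then cyclic homology via the SBI sequence.  Your ``dual computation'' would also succeed, but the paper's route is a one-line argument rather than another kernel--image count.

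For $K$-theory your invocation of Goodwillie is a genuine gap.  Goodwillie's theorem compares relative $K$-theory and relative cyclic homology across a \emph{nilpotent} extension, and there is no nilpotent ideal in sight here.  The paper instead uses Quillen's theorem for filtered rings: the order filtration on $A$ has $F_0A=S=\kk[x,y]$ and associated graded a polynomial ring, both of finite global dimension, so $K_i(\kk[x,y])\to K_i(A)$ is an isomorphism; composing with homotopy invariance $K_i(\kk)\cong K_i(\kk[x,y])$ finishes.  The periodic-homology statement likewise comes from Block's filtered analogue, not from SBI alone.  You should replace the Goodwillie step by these filtered-ring results.
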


These claims are contained in Propositions~\pref{prop:hh}, \pref{prop:hom},
\pref{prop:CY} and~\pref{prop:conn}. In Propositions~\pref{prop:cup}
and~\pref{prop:bracket} we describe completely the cup product and the
Gerstenhaber Lie structure on~$\HH^\bullet(\D(\A))$ --- we refer to their
statements for the precise details, which are technical. The
calculations needed in order to do these computations are annoyingly involved.

We obtain a very concrete description of $\HH^1(\D(\A))$ in
Proposition~\pref{prop:partial}:

\begin{Theorem}
Let $Q=\alpha_1\cdots\alpha_{r+2}$ be a factorization of the defining
polynomial as a product of linear factors, so that
$\alpha_1$,~\dots,~$\alpha_{r+2}$ are linear polynomials on~$V$ whose zero
loci are the hyperplanes of~$\A$.
\begin{thmlist}

\item For each $i\in\{1,\dots,r+2\}$ there is a unique derivation
$\partial_i:\D(\A)\to\D(\A)$ such that $\partial_i(f)=0$ for all $f\in S$
and $\partial_i(\delta)=\delta(\alpha_i)/\alpha_i$ for all
$\delta\in\Der(\A)$.

\item The set of classes of $\partial_1$,~\dots,~$\partial_{r+2}$
in~$\HH^1(\D(\A))$, which we view as the space of outer derivations of the
algebra~$\D(\A))$, is a basis.
\end{thmlist}
\end{Theorem}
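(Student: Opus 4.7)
For part (\emph{i}), uniqueness is automatic because $S$ and $\Der(\A)$ generate $\D(\A)$ as a $\kk$-algebra. For existence, I would present $\D(\A)$ as a quotient of the tensor algebra on $S \oplus \Der(\A)$ by the relations $\delta f - f\delta = \delta(f)$ for $f \in S$, $\delta \in \Der(\A)$, and $\delta_1\delta_2 - \delta_2\delta_1 = [\delta_1,\delta_2]$, and then verify that the prescribed values extend to a derivation by checking compatibility with these relations. First one must see that $\partial_i(\delta) \in S$: from $\delta(Q) \in QS$, a UFD divisibility argument in $S$, using that $\alpha_i$ does not divide $\prod_{j\neq i}\alpha_j$, gives $\alpha_i \mid \delta(\alpha_i)$. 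Because $\partial_i(\delta) \in S$ commutes with every element of $S$, the first class of relations is trivially respected, while the second reduces to
\[
\delta_1\bigl(\delta_2(\alpha_i)/\alpha_i\bigr) - \delta_2\bigl(\delta_1(\alpha_i)/\alpha_i\bigr) = [\delta_1,\delta_2](\alpha_i)/\alpha_i,
\]
which is a direct application of the quotient rule.

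For part (\emph{ii}), the Hilbert series of the first theorem yields $\dim\HH^1(\D(\A)) = r+2$, so it suffices to prove that the classes $[\partial_1],\ldots,[\partial_{r+2}]$ are linearly independent. Suppose $\sum c_i\partial_i = \ad(a)$ for some $a \in \D(\A)$. Since each $\partial_i$ vanishes on $S$, the element $a$ commutes with every $f \in S$; but inside $\End_\kk(S)$ the centralizer of $S$ consists precisely of the multiplication operators by elements of $S$, so $a \in S$. Evaluating $\ad(a)$ on $\delta \in \Der(\A)$ and using $\delta a - a\delta = \delta(a)$ yields
\[
\sum_{i=1}^{r+2} c_i\,\frac{\delta(\alpha_i)}{\alpha_i} = -\delta(a) \quad\text{for every } \delta \in \Der(\A).
\]
Testing on the Euler derivation $\delta_E$ (for which each $\delta_E(\alpha_i) = \alpha_i$) gives $\sum c_i = -\delta_E(a)$; decomposing $a$ into homogeneous components then forces $a \in \kk$ and $\sum c_i = 0$.

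The main obstacle is to deduce individual vanishing of each $c_i$ from the surviving polynomial identity $\sum c_i\,\beta_i = 0$ in $S$, where $\beta_i := \theta(\alpha_i)/\alpha_i$ and $\theta$ is a second generator of $\Der(\A)$ as a free $S$-module of rank two, whose existence is guaranteed by Saito's criterion. For this I would fix $i$, choose coordinates so that $\alpha_i = x$, write $\theta = A\partial_x + B\partial_y$, and restrict the identity to the line $x = 0$. A short computation, using $x \mid A$ (since $\theta$ is logarithmic along $\alpha_i$), gives $\beta_k|_{x=0} = B(0,y)/y$ for every $k \neq i$ --- remarkably independent of $k$ --- and $\beta_i|_{x=0} = A_x(0,y)$; combined with $\sum c_k = 0$ the restricted identity collapses to $c_i\bigl(A_x(0,y) - B(0,y)/y\bigr) = 0$. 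Finally, the Saito determinant relation $xB - yA = cQ$ (with $c \in \kk^\times$), differentiated in $x$ and restricted to $x = 0$, exhibits $A_x(0,y) - B(0,y)/y$ as a nonzero scalar multiple of $y^r$, forcing $c_i = 0$. Since $i$ was arbitrary, the classes are linearly independent.
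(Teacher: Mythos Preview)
Your argument is correct, and for part~\thmitem{2} it takes a genuinely different route from the paper. The paper proceeds by transporting each~$\partial_{\alpha_i}$ along the comparison morphism $\phi:\P\to\B A$ to an explicit cocycle in the small complex $A\otimes\Lambda V^*$, namely $\alpha_{iy}\tfrac{F}{\alpha_i}\otimes\hat D+1\otimes\hat E$, and then invokes the lemma that the quotients $F/\alpha_1,\dots,F/\alpha_{r+1}$ form a basis of~$S_r$ to see that these classes span. Your approach instead proves linear independence directly: the centraliser argument forces $a\in S$, evaluation on the Euler field pins $a$ to a scalar and gives $\sum c_i=0$, and your restriction--to--a--line trick, combined with the Saito determinant, isolates each~$c_i$. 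The paper's method has the advantage of producing explicit representatives in the small complex, which are needed later for the cup product and bracket computations; your method is more self-contained, bypasses the comparison morphism entirely, and the observation that all the $\beta_k|_{x=0}$ for $k\neq i$ collapse to a common value $B(0,y)/y$ is a nice piece of geometry that the paper does not exploit. One small point of presentation: you should make explicit that the ``first'' generator paired with~$\theta$ in the Saito determinant is the Euler derivation, so that the matrix whose determinant is $xB-yA$ really is the Saito matrix for a basis of~$\Der(\A)$.
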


The elements $\partial_1$,~\dots,~$\partial_{r+2}$ are canonically
determined and in a natural bijection with the set of hyperplanes. We do
not have a description along the same lines of the rest of the cohomology. 
In Proposition~\pref{prop:partial:sub}, though,  we do obtain the following piece of
information:

\begin{Theorem}
The subalgebra~$\H$ of~$\HH^\bullet(\D(\A))$ generated by the
component~$\HH^1(\D(\A))$ of degree~$1$ is isomorphic to the de Rham
cohomology of the complement $M(\A)$ of the arrangement. It is freely
generated as a graded-commutative algebra by the $r+2$ elements
$\partial_1$,~\dots,~$\partial_{r+2}$ of~$\HH^1(\D(\A))$ subject to the
relations
  \[
  \partial_i\smile\partial_j 
        + \partial_j\smile\partial_k
        + \partial_k\smile\partial_i
        = 0,
  \]
one for each choice of three pairwise distinct elements $i$,~$j$,~$k$
of~$\{1,\dots,r+2\}$. 
\end{Theorem}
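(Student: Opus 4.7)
\emph{Overall strategy.} The statement has two parts: a presentation of~$\H$ by generators and relations, and an identification of~$\H$ with $H^\bullet_{\mathrm{dR}}(M(\A))$. The second follows from the first together with the Orlik--Solomon presentation of the de~Rham cohomology of arrangement complements, since in our rank-two setting every triple of the linear forms $\alpha_i$ is linearly dependent and therefore contributes exactly one Orlik--Solomon relation, which is precisely the three-term relation of the statement. I would therefore concentrate on proving that~$\H$ admits the claimed presentation.

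\emph{Verifying the relations.} With the explicit cup product from Proposition~\pref{prop:cup} in hand, the cocycle $\partial_i\smile\partial_j$ can be described directly via the previous theorem: it vanishes as soon as one of its two arguments lies in~$S$, and on a pair of logarithmic derivations $(\delta,\delta')$ it takes the value $(\delta(\alpha_i)/\alpha_i)\,(\delta'(\alpha_j)/\alpha_j)=\omega_i(\delta)\,\omega_j(\delta')$, where $\omega_i=d\alpha_i/\alpha_i$ is the logarithmic $1$-form on~$M(\A)$ attached to~$\alpha_i$. Because any three of the $\alpha_i$ are linearly dependent in the two-dimensional space of linear forms on~$V$, a direct calculation in $\Omega^\bullet(M(\A))$ gives
\[
\omega_i\wedge\omega_j+\omega_j\wedge\omega_k+\omega_k\wedge\omega_i=0.
\]
Combined with graded commutativity of the Hochschild cup product on odd classes --- which makes symmetric combinations of the cocycles above cohomologically trivial --- this identity translates into the three-term relation in $\HH^2(\D(\A))$.

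\emph{No further relations.} The abstract graded-commutative algebra with the stated presentation has Hilbert series $1+(r+2)t+(r+1)t^2$ and is concentrated in degrees~$\le 2$; indeed, setting $\xi_i=\partial_i-\partial_{r+2}$ for $i\le r+1$, the OS relation applied to $\{i,j,r+2\}$ gives $\xi_i\xi_j=0$, so that every triple product in the generators already vanishes. The canonical map from this algebra to~$\H$ is surjective by construction, and the previous theorem provides $\dim\H^1=r+2$, so it remains to exhibit $r+1$ linearly independent classes in~$\H^2$, for instance $\partial_1\smile\partial_{r+2},\dots,\partial_{r+1}\smile\partial_{r+2}$. Their independence can be verified either by evaluating them on suitably chosen pairs of logarithmic derivations, or by pairing them with the explicit basis of Hochschild homology produced in Proposition~\pref{prop:hom}.

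\emph{Main obstacle.} Everything above is a fairly formal bookkeeping argument organised around Propositions~\pref{prop:cup} and~\pref{prop:partial}; the one point that requires genuine work is the lower bound on $\dim\H^2$, which has to be extracted from the bimodule resolution used throughout the paper. That step is where I expect the bulk of the effort to lie.
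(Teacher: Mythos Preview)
Your overall architecture is correct, but you take a detour that is not justified by the tools the paper has set up, and you misidentify where the work lies.

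For the verification of the three-term relation you want to interpret $\partial_i\smile\partial_j$ as the bilinear map $(\delta,\delta')\mapsto\omega_i(\delta)\omega_j(\delta')$ on logarithmic derivations and then import the Arnold identity from $\Omega^\bullet(M(\A))$. That interpretation presupposes a comparison between the Hochschild cup product on the resolution~$\P$ and a Chevalley--Eilenberg/Lie--Rinehart cup product, and no such comparison is available in the paper. The paper instead stays inside the complex $A\otimes\Lambda V^*$: from Proposition~\pref{prop:partial} each $\partial_{\alpha_i}$ is represented by $\alpha_{iy}\tfrac{F}{\alpha_i}\otimes\hat D+1\otimes\hat E$, and Proposition~\pref{prop:cup} gives $S_r\hat D\smile S_r\hat D=0$ and $\phi\hat D\smile\hat E=\phi\,\hat D\wedge\hat E$. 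A two-line computation then yields
\[
\partial_{\alpha_i}\smile\partial_{\alpha_j}=-\begin{vmatrix}\alpha_{ix}&\alpha_{jx}\\\alpha_{iy}&\alpha_{jy}\end{vmatrix}\frac{Q}{\alpha_i\alpha_j}\otimes\hat D\wedge\hat E,
\]
and the three-term relation is the vanishing of the $3\times3$ determinant with first row $(\alpha_i,\alpha_j,\alpha_k)$. No antisymmetrisation argument or passage through differential forms is needed.

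Your ``main obstacle'' is not one. The same formulas from Proposition~\pref{prop:cup} show immediately that $\HH^1(A)\smile\HH^1(A)=S_r\otimes\hat D\wedge\hat E$, so $\dim\H^2=\dim S_r=r+1$ on the nose; there is nothing further to extract from the resolution. Your alternative proposal, to pair the candidate classes with Hochschild homology via Proposition~\pref{prop:hom}, cannot work: that proposition says $\HH_2(A)=0$, so the pairing carries no information. Once $\dim\H^2=r+1$ is known, your dimension count for the abstract Orlik--Solomon-type algebra (which is the paper's argument too) finishes the proof.
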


Using our precise description of~$\HH^1(\D(\A))$ and the techniques of
J.~Alev and M.~Chamarie \cite{AC}, we arrive in Section~\pref{sect:autos}
at a description of the
automorphism group of the algebra~$\D(\A)$. Since the arrangement~$\A$ is
central, the Lie algebra $\Der(\A)$ is a \emph{graded} $S$-module, and that
grading turns $\D(\A)$ into a graded algebra: we will use this structure
in the following result.

\begin{Theorem}
Let $G$ be the subgroup of~$\GL(V)$ of maps which preserve the
arrangement~$\A$. 
\begin{thmlist}

\item There is an action of~$G$ on a vector space~$W$ of
dimension~$r+2$ such that the semidirect product $G\ltimes W$ is isomorphic
to the group $\Aut_0(\D(\A))$ of algebra automorphisms of~$\D(\A)$ which
respect the grading.

\item An element of~$\D(\A)$ is locally $\ad$-nilpotent if and only if it
belongs to~$S$. The set $\Exp(\A)=\{\exp\ad(f):f\in S\}$ of the
automorphisms of~$\D(\A)$ obtained as exponentials of locally
$\ad$-nilpotent elements is a subgroup of the full group of
automorphisms~$\Aut(\D(\A))$.

\item There is an action of~$\Aut_0(\D(\A))$ on~$\Exp(\A)$ such that there
is an isomorphism of groups
$\Aut(\D(\A)) = \Aut_0(\D(\A))\ltimes\Exp(\A)$.

\end{thmlist}
\end{Theorem}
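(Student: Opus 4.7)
The plan is to treat the three parts in sequence, using throughout the $\kk^\times$-grading on $\D(\A)$ coming from the scaling action on $V$, under which $S_n$ has degree $n$, the Euler derivation $E$ has degree $0$, and the second Saito generator $\theta$ of $\Der(\A)$ has degree $r$.

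I start with part (ii), since it feeds into the other parts. That elements of $S$ are locally $\ad$-nilpotent is immediate from the order filtration on $\D(\A)$: commutation with any $f \in S$ lowers the order of an operator by one, so $\ad(f)^{n+1}$ vanishes on any operator of order $n$. For the converse, if $x \in \D(\A)$ has positive order $n$, I pass to the associated graded algebra, which is a commutative Poisson algebra, and exhibit an element whose iterated Poisson brackets with the principal symbol $\sigma_n(x)$ are nonzero, contradicting local $\ad$-nilpotence. That $\Exp(\A)$ is a subgroup of $\Aut(\D(\A))$ follows from the commutativity of $S$: since $\ad(f)$ and $\ad(g)$ commute for $f, g \in S$, we have $\exp\ad(f) \circ \exp\ad(g) = \exp\ad(f+g)$, and $\Exp(\A) \cong S/\kk$ additively.

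Turning to part (i), let $\phi$ be a graded automorphism. By part (ii), $\phi$ preserves $S$, so $\phi|_S$ is a graded automorphism of $S$ determined by its restriction to $S_1 = V^*$. The normality of $Q$ and the canonical form of the modular automorphism identified in the first theorem force $\phi(Q) \in \kk Q$, so the induced linear map on $V^*$ preserves $\A$ and defines an element of $G$; this yields a homomorphism $\Aut_0(\D(\A)) \to G$ split by the natural action of $G$ on $\D(\A)$. The kernel consists of graded automorphisms with $\phi|_S = \id$; writing $\phi(E) = E + c$ and $\phi(\theta) = \theta + s$, commutation with $S$ forces $c, s \in S$, while the grading gives $c \in S_0 = \kk$ and $s \in S_r$. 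The cocycle condition coming from $[E, \theta] = r\theta$ reduces to $E(s) = rs$, which holds automatically. Thus $W = \kk \oplus S_r$ has dimension $1 + (r+1) = r+2$, and $G$ acts on it via its action on $S_r$.

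For part (iii), the goal is to conjugate each $\phi \in \Aut(\D(\A))$ into $\Aut_0(\D(\A))$ by an element of $\Exp(\A)$. The element $\phi(E)$ satisfies: $\ad(\phi(E))$ is semisimple with the same integer spectrum as $\ad(E)$. Following the techniques of Alev--Chamarie~\cite{AC}, I would prove the existence of $f \in S$ with $\exp\ad(f)(E) = \phi(E)$; since $[f, E] = -E(f) \in S$ and $[f, [f, E]] = 0$, this reduces to solving $\phi(E) - E = -E(f)$ in $S$, whose solvability depends on showing $\phi(E) - E \in E(S) = S^{\geq 1}$. Establishing this membership is the main obstacle of the proof and requires a careful analysis of how $\phi$ interacts with the order filtration on $\D(\A)$ and with the basis $\partial_1, \dots, \partial_{r+2}$ of $\HH^1(\D(\A))$ obtained in the second theorem. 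Once this is in hand, $\psi = \exp\ad(-f) \circ \phi$ fixes $E$, commutes with the $\kk^\times$-action that $\ad(E)$ generates, and so lies in $\Aut_0(\D(\A))$. Normality of $\Exp(\A)$ in $\Aut(\D(\A))$ then follows from the identity $\phi \circ \exp\ad(f) \circ \phi^{-1} = \exp\ad(\phi(f))$, and $\Aut_0(\D(\A)) \cap \Exp(\A) = \{\id\}$ because any $f \in S$ whose $\ad$ preserves the grading must lie in $\kk$; together these give $\Aut(\D(\A)) = \Aut_0(\D(\A)) \ltimes \Exp(\A)$.
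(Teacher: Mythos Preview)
Your outline for parts~(i) and~(ii) is broadly in the right spirit, but part~(iii) contains a genuine gap, and the fix reveals that the paper's route is rather different from yours.

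The central problem is your claim that one should establish $\phi(E)-E\in E(S)=S^{\geq1}$. This is simply false for many automorphisms: take the graded automorphism fixing $x$, $y$, $D$ and sending $E\mapsto E+1$ (this is the case $v=1$, $\phi_0=0$, identity matrix in Lemma~\ref{lemma:iso:gr}). Then $\phi(E)-E=1\in S_0$, which is not in $S^{\geq1}$, and there is no $f\in S$ with $\exp\ad(f)(E)=\phi(E)$. Your scheme of forcing $\psi=\exp\ad(-f)\circ\phi$ to \emph{fix} $E$ is therefore too strong: graded automorphisms need not fix $E$, only commute with $\ad(E)$. The repair is easy to state---split $\phi(E)-E=v+g$ with $v\in\kk$ and $g\in S^{\geq1}$, then solve $E(f)=-g$---but it presupposes that $\phi(E)-E\in S$ in the first place. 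You assume this when you write ``solving $\phi(E)-E=-E(f)$ in $S$'', but you never argue it, and it is not obvious: a priori $\phi(E)$ could involve $D$ or higher powers of $E$. Establishing $\phi(E)-E\in S$ directly is essentially as hard as the whole theorem.

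The paper proceeds in the opposite order. Rather than correcting by an exponential first, it analyzes the homogeneous components of $\theta(x)$, $\theta(y)$, $\theta(E)$, $\theta(D)$ for an arbitrary automorphism $\theta$ and shows, by a careful computation with the relations (and a small trick ruling out $s>1$ via the quotient $A/(A_s)$), that the lowest-degree pieces assemble into a graded automorphism $\theta_0$ of the form in Lemma~\ref{lemma:iso:gr}. Only then does one consider $\theta_0^{-1}\theta$, which now satisfies $\theta_0^{-1}\theta(a)\in a+\bigoplus_{j>i}A_j$ for $a\in A_i$. The Alev--Chamarie input is used at this stage, and not the way you suggest: the homogeneous pieces of $\theta_0^{-1}\theta$ form a \emph{higher derivation}, and Lemma~\ref{lemma:ac} forces each piece to lie in the subalgebra generated by $\Der(A)$. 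Combined with the explicit description of $\HH^1(A)$, this shows $\theta_0^{-1}\theta$ preserves each ideal $\alpha_iA$, hence fixes $x$ and $y$ individually; only after that does one read off $\theta_0^{-1}\theta(E)-E\in S$ from commutation with $x,y$ (Lemma~\ref{lemma:iso:xy}) and identify the automorphism as $\exp\ad(f)$.

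A smaller remark on~(ii): your Poisson-bracket sketch for the converse is delicate, since local $\ad$-nilpotence in $A$ does not pass cleanly to local Poisson-nilpotence of the symbol (and in the ambient Weyl algebra $\partial_y$ has positive order yet is locally $\ad$-nilpotent). The paper instead uses that $\ker\ad(e)$ is factorially closed together with $[A,xA]\subseteq xA$ and $[A,FA]\subseteq FA$ to force $e$ to commute with $x$ and with a linear factor of $F$, whence $e\in S$ by Lemma~\ref{lemma:iso:xy}.
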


\smallskip

This knowledge of the automorphism group of~$\D(\A))$ allows us to describe
the set of normal elements of the algebra and its birational class:

\begin{Theorem}
The set of normal elements of~$\D(\A)$ is the saturated multiplicatively
closed subset generated by~$Q$. The maximal normal localization
of~$\D(\A)$, which is therefore $\D(\A)[\tfrac1Q]$, is isomorphic to the
localization of the Weyl algebra $\mathit{Diff}(S)[\tfrac1Q]$.
\end{Theorem}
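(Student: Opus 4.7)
The plan is to first establish the identification of localizations in Part~(ii) by means of Saito's criterion, and then to read off the classification of normal elements in Part~(i) from the classification of units of the Weyl algebra localization.

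For Part~(ii), we have $S\subseteq\D(\A)\subseteq\mathit{Diff}(S)$, and since $Q$ is normal in $\D(\A)$ the Ore localization $\D(\A)[\tfrac1Q]$ exists and embeds in $\mathit{Diff}(S)[\tfrac1Q]$. For the reverse inclusion, we pick a free basis $E_1,E_2$ of the $S$-module $\Der(\A)$ and write $E_i=\sum_j m_{ij}\partial_{x_j}$ with $m_{ij}\in S$. Saito's criterion ensures that $\det(m_{ij})$ is a nonzero scalar multiple of~$Q$, so the matrix $(m_{ij})$ is invertible over $S[\tfrac1Q]$, and the partial derivatives $\partial_{x_j}$ are thereby expressed as elements of $\D(\A)[\tfrac1Q]$. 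Since $\mathit{Diff}(S)$ is generated over~$S$ by the $\partial_{x_j}$, this yields $\mathit{Diff}(S)\subseteq\D(\A)[\tfrac1Q]$ and hence the equality.

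For Part~(i), the inclusion $(\supseteq)$ is direct: each $\alpha_i$ is normal in~$\D(\A)$. Indeed, expanding $\delta(Q)=\sum_j\delta(\alpha_j)\prod_{k\neq j}\alpha_k$ and reducing the condition $\delta(Q)\in QS$ modulo~$\alpha_i$ gives $\alpha_i\mid\delta(\alpha_i)$ in~$S$ for every $\delta\in\Der(\A)$; consequently $\delta\alpha_i=\alpha_i(\delta+\delta(\alpha_i)/\alpha_i)\in\alpha_i\D(\A)$ and symmetrically, so $\alpha_i\D(\A)=\D(\A)\alpha_i$. Products and scalar multiples of the $\alpha_i$ therefore cover the saturated multiplicative closure of $Q=\alpha_1\cdots\alpha_{r+2}$.

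For the reverse inclusion $(\subseteq)$, let $a\in\D(\A)$ be normal; since both $a$ and~$Q$ are normal in~$\D(\A)$, the relation $a\D(\A)=\D(\A)a$ passes to $\D(\A)[\tfrac1Q]=\mathit{Diff}(S)[\tfrac1Q]$, so $a$ is normal there as well. The Weyl algebra $\mathit{Diff}(S)$ is simple in characteristic zero, and simplicity is preserved by Ore localization; in a simple ring every nonzero normal element is a unit, so $a$ is a unit of $\mathit{Diff}(S)[\tfrac1Q]$. The order filtration extends to the localization with associated graded the polynomial ring $S[\tfrac1Q][\xi_1,\xi_2]$, a domain; a symbol-degree argument applied to $uv=1$ forces $u$ and~$v$ to have order~$0$, i.e.\ to lie in $S[\tfrac1Q]$. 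The units of $S[\tfrac1Q]$ are $\{c\prod_i\alpha_i^{n_i}:c\in\kk^\times,\,n_i\in\ZZ\}$, and since $a$ acts as an operator on~$S$, evaluation at $1\in S$ forces all $n_i\geq 0$, placing $a$ in the saturated multiplicative closure of~$Q$. The main subtlety will be the bookkeeping of the two Ore localizations and the passage of normality between $\D(\A)$ and $\mathit{Diff}(S)[\tfrac1Q]$; once Saito's determinant identity and the order filtration are in place, the rest is a direct consequence of the simplicity of the Weyl algebra.
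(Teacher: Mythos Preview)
Your approach to Part~(ii) is essentially the paper's: the paper writes out the explicit formulas $\partial_y=\tfrac1F D$ and $\partial_x=\tfrac1x E-\tfrac{y}{Q}D$, which is just Saito's determinant identity unpacked for the chosen basis $\{E,D\}$.

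Your approach to the hard inclusion in Part~(i) is genuinely different and more conceptual than the paper's. The paper works internally in $\D(\A)$: it uses the $\NN_0$-grading to show that a normal element $u$ must be homogeneous, then argues from the structure of $A_1$ that the associated automorphism $\theta_u$ fixes $x$ and $y$, whence $u\in S$ by the commutation lemma, and finally analyzes the condition $\phi u+u_yF=0$ on the linear factors of $u$. You instead pass to the ambient Weyl algebra, use its simplicity, and read everything off from the order filtration. Your route is shorter and avoids the grading gymnastics entirely, at the cost of invoking simplicity of $\mathit{Diff}(S)[\tfrac1Q]$.

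There is, however, a genuine gap at the step you yourself flag as ``the main subtlety'': the claim that $a\D(\A)=\D(\A)a$ \emph{passes to} $\D(\A)[\tfrac1Q]$. Normality does not in general survive Ore localization at another normal element; what you need is that $\theta_a(Q)$ becomes a unit in $\D(\A)[\tfrac1Q]$, and you have not argued this. The fix is short and uses exactly the tool you invoke later: from $aQ=\theta_a(Q)\,a$ in $\mathit{Diff}(S)$, comparing orders gives $\mathrm{ord}(\theta_a(Q))=0$, so $\theta_a(Q)\in S$; then taking principal symbols in the commutative domain $\gr\mathit{Diff}(S)=S[\xi_1,\xi_2]$ yields $\sigma(a)\,Q=\theta_a(Q)\,\sigma(a)$, hence $\theta_a(Q)=Q$. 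Thus $a$ actually commutes with $Q$, and normality passes trivially. With this one line inserted, your argument is complete. (Your final step is also slightly roundabout: once $a\in S[\tfrac1Q]$ and $a\in\mathit{Diff}(S)$, the order filtration already gives $a\in\mathit{Diff}(S)\cap S[\tfrac1Q]=S$ directly, without appealing to the action on $1$.)
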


Finally, using ---as it is often done--- normal elements, we are able to
classify the algebras under study up to isomorphism:

\begin{Theorem}
Let $\A$ and $\A'$ be two central arrangements of lines in~$V$. The
algebras $\D(\A)$ and~$\D(\A')$ are isomorphic if and only if the
arrangements $\A$ and~$\A'$ themselves are linearly isomorphic.
\end{Theorem}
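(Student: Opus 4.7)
The plan is to treat the two implications separately. The ``if'' direction is immediate: a linear automorphism $f:V\to V$ with $f(\A)=\A'$ induces an algebra isomorphism of~$S$ that carries $\Der(\A)$ to $\Der(\A')$ inside $\Der(S)$, and thus extends uniquely to an algebra isomorphism $\D(\A)\to\D(\A')$. The substance lies entirely in the converse, and my strategy is to extract the arrangement from the preceding theorem's description of the normal elements, without ever assuming that an isomorphism $\psi:\D(\A)\to\D(\A')$ respects either the subalgebra~$S$ or the natural grading.

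Fix such a~$\psi$. Since normality is preserved by any algebra isomorphism, $\psi$ identifies the set of normal elements of~$\D(\A)$ with that of~$\D(\A')$. By the preceding theorem these two sets are $\kk^\times\cdot\{\prod_i\alpha_i^{n_i}\}$ and $\kk^\times\cdot\{\prod_j(\alpha_j')^{m_j}\}$ respectively, both contained in the respective polynomial subalgebras, and modulo~$\kk^\times$ each becomes a free commutative monoid on the classes of its linear factors. The induced monoid isomorphism must carry irreducibles to irreducibles, so $\A$ and $\A'$ have the same number $r+2$ of lines, and there exist a permutation~$\pi$ of~$\{1,\dots,r+2\}$ and scalars $\lambda_i\in\kk^\times$ with $\psi(\alpha_i)=\lambda_i\alpha_{\pi(i)}'$ for every~$i$.

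Next, since the~$\alpha_i$ span~$V^*$ (there being $r+2\geq5>2=\dim V$ distinct linear forms) and $V^*$ generates~$S$ as an algebra, the $\kk$-subalgebra of~$\D(\A)$ generated by the set of normal elements is exactly~$S$; the analogous statement holds for~$S'$. Therefore $\psi(S)=S'$, and the explicit formula for~$\psi$ on the~$\alpha_i$ forces $\psi$ to restrict to a linear isomorphism $V^*\to V^*$ taking $\kk\alpha_i$ to~$\kk\alpha_{\pi(i)}'$. Dualizing, one obtains a linear automorphism of~$V$ sending $\ker\alpha_{\pi(i)}'$ to $\ker\alpha_i$ for each~$i$, which is the desired linear isomorphism of arrangements.

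The main obstacle is conceptual rather than computational: $\psi$ is not compatible a priori with either the polynomial subalgebra or the grading, so the whole argument depends on recovering~$S$ in purely intrinsic terms. The preceding theorem makes this recovery possible by pinpointing the normal elements exactly, and the pivotal step is then the characterization of the~$\alpha_i$, up to scalar, as the irreducibles of the normal monoid --- everything else is a short bookkeeping exercise once that description is in hand.
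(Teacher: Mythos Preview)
Your argument is correct and follows the paper's overall strategy of using the description of normal elements to recover the arrangement. The one genuine difference is in how you pin down the polynomial subalgebra~$S$ intrinsically. The paper invokes a separate result (the characterization of~$S$ as exactly the set of locally $\ad$-nilpotent elements of~$\D(\A)$) to conclude $\psi(S)=S'$, and only afterwards brings in the normal monoid to obtain the permutation of linear factors. You instead observe that $S$ is already the subalgebra generated by the normal elements, which lets you dispense with the $\ad$-nilpotency argument entirely. Your route is a bit more economical, since it relies on a single structural invariant; the paper's route has the mild advantage that the identification $\psi(S)=S'$ is established before one knows anything about how the linear factors match up, which makes the logic slightly more modular. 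Either way, once $\psi(S)=S'$ and $\psi(\alpha_i)\in\kk^\times\alpha'_{\pi(i)}$ are in hand, the conclusion that $\psi|_{S_1}$ is a linear isomorphism carrying the arrangement to the arrangement is immediate, exactly as you say. (Your parenthetical $r+2\geq5$ is stronger than needed here: two distinct lines already suffice for the~$\alpha_i$ to span~$V^*$.)
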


This means, essentially, that we can reconstruct the arrangement from the
algebra~$\D(\A)$ of its differential operators.

\bigskip

We expect most of the above results to hold in the general case of a free
arrangement of hyperplanes of arbitrary rank. As our computations here make
clear, some technology is needed in order to deal with more complicated
cases. In future work, we will show how to organize this computation using
the language of Lie--Rinehart pairs \cite{Rinehart} and their cohomology theory.
On the other hand, one can interpret the second cohomology space
$\HH^2(\D(\A))$ as classifying infinitesimal deformations of
the algebra~$\D(\A)$ and use $\HH^3(\D(\A))$ and our description of the
Gerstenhaber bracket to study the deformation theory of~$\D(\A)$. This
produces a somewhat concrete interpretation of the second cohomology space
in geometrical terms. As this involves quite a bit of calculation, we defer
the exposition of these results to a later paper.

The contents of this paper are part of the doctoral thesis of the first
author.

\bigskip

The paper is organized as follows. We start in Section~\pref{sect:algebra}
by giving a concrete realization of the algebra~$\D(\A)$ as an iterated Ore
extension of a polynomial ring and proving some useful lemmas. In
Section~\pref{sect:res} we construct a resolution for~$\D(\A)$ and in
Sections~\pref{sect:hh} and~\pref{sect:cup} we present the
computation of the Hochschild cohomology $\HH^\bullet(\D(\A))$ and
its Gerstenhaber algebra structure. Section~\pref{sect:rest} gives the much
easier determination of the Hochschild homology, cyclic homology, periodic
homology and $K$-theory of our algebra, followed by the proof, in
Section~\pref{sect:cy}, of the twisted Calabi--Yau property. Finally, in
the last section we determine the automorphism group of $\D(\A)$ and
classify the algebras of this form up to isomorphism.

\smallskip

\noindent\textbf{Some notations} We will use the symbols $\triangleright$
and $\triangleleft$ to denote the left and right actions of an algebra on a
bimodule whenever this improves clarity. We will have a ground field~$\kk$
of characteristic zero. All vector spaces and algebras are implicitly
defined over~$\kk$, and unadorned $\otimes$ and $\hom$ are taken with
respect to~$\kk$. If $M$ is a vector space, we will often denote by
  \[
  \something{M}
  \]
an element of $M$ about which we do not need to be specific.

We refer to the book \cite{OT} for a general reference about hyperplane
arrangements and their derivations, and to C.~Weibel's book \cite{Weibel}
for generalities about homological algebra and, in particular, Hochschild,
cyclic and periodic theories.

\section{The algebra of differential operators associated to a central
arrangement of lines}
\label{sect:algebra}

\paragraph We fix once and for all a ground field~$\kk$ of characteristic
zero and put $S=\kk[x,y]$. We view $S$ as a graded algebra as usual, with
both~$x$ and~$y$ of degree~$1$, and for each $p\geq0$ we write $S_p$ the
homogeneous component of~$S$ of degree~$p$.

We write $\Der(S)$ the Lie algebra of derivations of~$S$, which is a free left
graded $S$-module, freely generated by the usual partial derivatives
$\partial_x$,~$\partial_y:S\to S$, which are homogeneous elements
of~$\Der(S)$ of degree~$-1$. On the other hand, we write
$\D(S)$ the associative algebra of regular differential operators on~$S$,
as defined, for example, in~\cite{MR}*{\oldS15.5}.
As this is by definition a subalgebra of~$\End_\kk(S)$, there is a tautological
structure of left $\D(S)$-module on~$S$.

There is an injective morphism of algebras $\phi:S\to\D(S)$ such that
$\phi(s)(a)=as$ for all $s$,~$a\in S$ which we will view as an
identification; elements in its image are the differential operators of
order zero. Since $S$~is a regular algebra, the algebra~$\D(S)$ is
generated as a subalgebra of~$\End_\kk(S)$  by~$S$ and~$\Der(S)$;
see~\cite{MR}*{Corollary~15.5.6}. A consequence of this is that $\D(S)$ is
generated as an algebra by $x$,~$y$,~$\partial_x$ and~$\partial_y$, and in
fact these elements generate it freely subject to the relations
  \begin{align}
  &  [x,y]=[\partial_x,y]=[\partial_y,x]=[\partial_x,\partial_y]=0,
  && [\partial_x,x]=[\partial_y,y]=1.
  \end{align}
It follows easily from this that $\D(S)$ has a $\ZZ$-grading with $x$ and $y$ in
degree~$1$ and $\partial_x$ and~$\partial_y$ in degree~$-1$, and that with respect
to this grading, $S$ is a graded $\D(S)$-module.

\paragraph\label{p:start} We fix an integer $r\geq-1$ and consider a central
arrangement $\A$ of $r+2$ lines in the plane~$\AA^2$. Up to a change of
coordinates, we may assume that the line with equation $x=0$ is one of the
lines in~$\A$, so that the defining polynomial~$Q$ of the arrangement is of the
form $xF$ for some square-free homogeneous polynomial $F\in S$ of degree
$r+1$ which does not have $x$ as a factor. Up to multiplying by a scalar,
which does not change anything substantial, we may assume that
  \(
  F=x\bar F+y^{r+1}
  \)
for some $\bar F\in S_r$.

We let $\Der(\A)$ be the Lie
algebra of derivations of~$S$ that preserve the arrangement, as
in~\cite{OT}*{\oldS4.1}, so that
  \[
  \Der(\A)=\{d\in\Der(S):d(Q)\in QS\}.
  \]
This a graded Lie subalgebra of~$\Der(S)$.
The two derivations
  \begin{align}
  &  E=x\partial_x+y\partial_y,
  && D=F\partial_y
  \end{align}
are elements of~$\Der(\A)$ of degrees~$0$ and~$r$, and it follows
immediately from Saito's criterion \cite{OT}*{Theorem 4.19} that the set
$\{E,D\}$ is a basis of $\Der(\A)$ as a graded $S$-module; this is the
content of Example 4.20 in that book.

The \newterm{algebra of differential operators tangent to the
arrangement~$\A$} is the subalgebra~$\D(\A)$ of~$\D(S)$ generated by~$S$
and~$\Der(\A)$. It follows immediately from the remarks above that $\D(\A)$
is generated by $x$,~$y$,~$E$ and~$D$, and a computation shows that the
following commutation relations hold in~$\D(\A)$:
  \begin{align}
  &  [y,x] = 0, \\
  &  [D,x] = 0,
  && [D,y] = F, \label{eq:relations}\\
  &  [E,x] = x,
  && [E,y] = y,
  && [E,D] = rD.
  \end{align}
Since these generators are homogeneous elements in~$\D(S)$ ---with $E$ of
degree~$0$, $x$ and~$y$ of degree~$1$ and $D$ of degree~$r$--- we see that
the algebra $\D(\A)$ is a graded subalgebra of~$\D(S)$ and, by restricting
the structure from~$\D(S)$, that $S$ is a graded $\D(\A)$-module.

The set of commutation relations given above is in fact a
presentation of the algebra~$\D(\A)$. More precisely, we have:

\begin{Lemma*}\plabel{lemma:ore}
The algebra~$\D(\A)$ is isomorphic to the iterated Ore extension
$S[D][E]$. It is a noetherian domain and the set
$\{x^iy^jD^kE^l:i,j,k,l\geq0\}$ is a $\kk$-basis for~$\D(\A)$.
\end{Lemma*}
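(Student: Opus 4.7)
The plan is to realize $\D(\A)$ as an iterated Ore extension by building the presentation abstractly and then comparing it with the concrete subalgebra of $\D(S)$.

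First I would construct the iterated Ore extension $T = S[\DD;\id_S,\delta][\EE;\id,\Delta]$, where $\delta: S\to S$ is the derivation $F\partial_y$ and $\Delta: S[\DD]\to S[\DD]$ is the derivation determined on generators by $\Delta(x)=x$, $\Delta(y)=y$ and $\Delta(\DD)=r\DD$. The only non-trivial point is that $\Delta$ is a well-defined derivation of $S[\DD]$: since $S[\DD]$ is presented by the relations $[\DD,x]=0$ and $[\DD,y]=F$, one must verify that $\Delta(F)=(r+1)F$ in $S$. But $\Delta$ restricts on $S$ to the Euler derivation $x\partial_x+y\partial_y$ and $F$ is homogeneous of degree $r+1$, so this is precisely Euler's identity. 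Standard Ore extension theory then gives that $T$ is a noetherian domain with $\kk$-basis $\{x^iy^j\DD^k\EE^l : i,j,k,l\geq 0\}$.

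Second, because the relations imposed on the generators of $T$ are exactly the commutation relations already satisfied by $x,y,D,E$ in $\D(\A)$, the assignment $x\mapsto x$, $y\mapsto y$, $\DD\mapsto D$, $\EE\mapsto E$ defines an algebra homomorphism $\phi:T\to\D(\A)$; this map is surjective since $x$, $y$, $D$, $E$ generate $\D(\A)$.

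To establish injectivity I would pass to the order filtration on the ambient Weyl algebra $\D(S)$, whose associated graded is the commutative polynomial ring $S[\xi,\eta]$, where $\xi$ and $\eta$ are the symbols of $\partial_x$ and $\partial_y$. One has $\gr(D) = F\eta$ and $\gr(E) = x\xi+y\eta$, so a putative non-zero element of $\ker\phi$ would, upon passing to the top of the order filtration, produce a non-trivial $\kk$-linear relation among the monomials $x^iy^j(F\eta)^k(x\xi+y\eta)^l$ in $S[\xi,\eta]$. It thus suffices to show that $F\eta$ and $x\xi+y\eta$ are algebraically independent over $S$ inside $S[\xi,\eta]$. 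Extending scalars to the fraction field $K=\kk(x,y)$, both elements are linear forms in $K[\xi,\eta]$, and they are $K$-linearly independent because $F\eta$ involves no $\xi$ while $x\xi+y\eta$ does; hence they are algebraically independent over $K$, and therefore over $S$.

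The main technical hurdle is the compatibility check for $\Delta$, which as noted reduces cleanly to Euler's identity, together with the algebraic-independence step used in the injectivity argument. Once these are in place, the noetherian and domain properties and the monomial basis are immediate from Ore extension theory.
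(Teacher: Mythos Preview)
Your proof is correct, and it takes a genuinely different route from the paper's argument for injectivity. You work with the order filtration on the ambient Weyl algebra $\D(S)$ and reduce the question to the algebraic independence of the two linear forms $F\eta$ and $x\xi+y\eta$ over $\kk(x,y)$, which is immediate. The paper instead argues analytically: it picks a point $p$ off the arrangement, passes to the completion $\O_p$, and constructs formal series $\phi,\psi\in\O_p$ satisfying $E\cdot\phi=1$, $D\cdot\phi=0$, $E\cdot\psi=0$, $D\cdot\psi=x^r$ (the existence of $\psi$ requires solving a small integrability condition); evaluating a putative kernel element on products $\phi^s\psi^t$ then forces all coefficients to vanish.

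Your approach is more elementary and purely algebraic, avoiding completions and the PDE-flavored construction of $\phi$ and $\psi$; it also makes transparent why the result holds, since it reduces to a Jacobian-type nondegeneracy of the symbols of $D$ and $E$. The paper's approach, on the other hand, has the virtue of working directly with the module $S$ (or its completion) on which $\D(\A)$ acts, which fits the operator-theoretic viewpoint. One small remark: in your compatibility check for $\Delta$ you verify $\Delta(F)=(r+1)F$ for the relation $[\DD,y]=F$; strictly speaking you should also note that the relation $[\DD,x]=0$ is preserved, but this is trivial since $\Delta(x)=x$ and $\Delta(\DD)=r\DD$ both commute with $x$ modulo the relation itself.
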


Here we view $D$ as a derivation of~$S$, so that we way
construct the Ore extension $S[D]$, and view $E$ as a derivation of this
last algebra, so as to be able extend once more to obtain~$S[D][E]$.

\begin{proof}
It is clear at this point that the obvious map
$\pi:S[D][E]\to\D(\A)$ is a surjective morphism of algebras, so we
need only prove that it is injective. To do that, let us suppose that there
exists a non-zero element $L$ in~$S[D][E]$ whose image under the
map~$\pi$ is zero, and suppose that $L=\sum_{i,j\geq0}f_{i,j}D^iE^j$, with
coefficients~$f_{i,j}\in S$ for all $i$,~$j\geq0$, almost all of which are
zero. As $L$ is non-zero, we may consider the number
$m=\max\{i+j:f_{i,j}\neq0\}$.

Let us now fix a point $p=(a,b)\in\AA^2$ which is not on any line of the
arrangement~$\A$, so that $aF(a,b)\neq0$, and let $\O_p$ be the completion
of $S$ at the ideal $(x-a,y-b)$ or, more concretely, the algebra of formal
series in $x-a$ and $y-b$. We view $\O_p$ as a left module over $\D(S)$ in
the tautological way and, by restriction, as a left $\D(\A)$-module.
There exist formal series $\phi$ and~$\psi$ in~$\O_p$ such that
  \begin{align}
  &  E\cdot\phi = 1,
  && D\cdot\phi = 0,
  && E\cdot\psi = 0,
  && D\cdot\psi = x^r.
\intertext{Indeed, we may choose $\phi=\ln x$ to satisfy the first two
conditions, and the last two ones are equivalent to the equations}
  &  \partial_x\psi = -\frac{x^{r-1}y}{F},
  && \partial_y\psi = \frac{x^r}{F},
  \end{align}
which can be solved for~$\psi$, as the usual well-known sufficient
integrability condition from elementary calculus holds. If now
$s$,~$t\in\NN_0$ are such that $s+t=m$, a straightforward computation shows
that
  \(
  L\cdot\phi^s\psi^t=s!t!x^{rt}f_{s,t}
  \)
in~$\O_p$, and this implies that $f_{s,t}=0$. This contradicts the choice
of~$m$ and this contradiction proves what we want.
\end{proof}

\paragraph We will use the following two simple lemmas a few
times:

\begin{Lemma*}\plabel{lemma:indep}
Suppose that $r\geq2$.
If $\alpha$,~$\beta\in S_1$ are such that $\alpha F_x+\beta F_y=0$,
then $\alpha=\beta=0$.
\end{Lemma*}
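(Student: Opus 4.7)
The plan is to reduce the statement to the coprimality of the partial derivatives $F_x$ and $F_y$ in the UFD $S$, and then exploit the mismatch in degrees. First I would observe that the identity $\alpha F_x + \beta F_y = 0$ rewrites as $\alpha F_x = -\beta F_y$, so that $F_y$ divides $\alpha F_x$ in $S$. If we can show $\gcd(F_x, F_y) = 1$, then $F_y \mid \alpha$, and since $\deg F_y = r \geq 2 > 1 \geq \deg\alpha$ this forces $\alpha = 0$; then $\beta F_y = 0$ in the domain $S$, and because $F = x\bar F + y^{r+1}$ makes $F_y$ nonzero (its coefficient on $y^r$ is $r+1$), we conclude $\beta = 0$.

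The work therefore reduces to showing that $F_x$ and $F_y$ are coprime in $S = \kk[x,y]$. I would argue by contradiction: suppose an irreducible homogeneous $p \in S$ divides both. Euler's identity $xF_x + yF_y = (r+1)F$, together with the hypothesis $\mathrm{char}\,\kk = 0$, gives $p \mid F$, so we may write $F = p h$. Differentiating yields $F_x = p_x h + p h_x$, whence $p \mid p_x h$. Since $p$ is irreducible and homogeneous and we are in characteristic zero, $\gcd(p, p_x) = 1$ (Euler applied to $p$ shows that a common factor of $p$ and $p_x$ would have to be proportional to $p$, which is impossible by degree). Hence $p \mid h$, and so $p^2 \mid F$, contradicting the hypothesis that $F$ is square-free.

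The only delicate point is this coprimality lemma for square-free homogeneous bivariate polynomials; everything else is immediate from degree-counting and the explicit form $F = x\bar F + y^{r+1}$. The assumption $r \geq 2$ enters exactly where we compare $\deg F_y = r$ with $\deg\alpha = 1$, which is what makes the conclusion fail for smaller $r$.
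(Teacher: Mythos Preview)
Your approach is correct and takes a genuinely different route from the paper's, but there is one small patch needed in the coprimality step.

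You claim that for an irreducible homogeneous $p$ one has $\gcd(p,p_x)=1$. This fails when $p_x=0$, which occurs precisely when $p$ is a scalar multiple of~$y$ (since $p$ is irreducible in characteristic zero). In that case $p\mid p_xh$ tells you nothing. The fix is immediate: use \emph{both} partial derivatives. From $p\mid F_x$ and $p\mid F_y$ together with $F=ph$ you get $p\mid p_xh$ and $p\mid p_yh$; if $p\nmid h$ then $p\mid p_x$ and $p\mid p_y$, forcing $p_x=p_y=0$ by degree, which contradicts Euler's relation $xp_x+yp_y=(\deg p)\,p$ in characteristic zero. Hence $p\mid h$ and $p^2\mid F$, contradicting square-freeness.

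As for the comparison: the paper does not argue via coprimality of $F_x$ and $F_y$ at all. It instead exploits the splitting of $F$ into distinct linear factors (available because $\A$ is a line arrangement): choosing three factors $F_1,F_2,F_3$ and reducing $\alpha F_x+\beta F_y$ modulo each~$F_i$, one finds that the $2\times2$ matrix of coefficients of $\alpha,\beta$ has three pairwise independent eigenvectors in~$\kk^2$, hence is a scalar matrix; Euler's identity then kills the scalar. Your argument is more robust in that it applies to any square-free homogeneous $F$ of degree $\geq3$ over a field of characteristic zero, whether or not $F$ splits into linear factors, and it isolates cleanly where the hypothesis $r\geq2$ enters (the degree comparison $\deg F_y=r>1=\deg\alpha$). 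The paper's argument, on the other hand, is more in keeping with the arrangement point of view and uses the linear factors of~$F$ directly.
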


The conclusion of this statement is false if $r<2$.

\begin{proof}
Suppose that $F_1$, $F_2$ and $F_3$ are three distinct linear
factors of~$F$ (here is where we need the hypothesis that $r$ is at least
$2$) so that $F=F_1F_2F_3F'$ for some $F'\in S_{r-2}$; as
$F$ has degree at least $3$, this is possible. We have
$F_x\equiv F_{1x}F_2F_3F'$ and $F_y\equiv F_{1y}F_2F_3F'$ modulo~$F_1$, so
that $(\alpha F_{1x}+\beta F_{1y})F_2F_3F'\equiv0\mod F_1$. Since $F$ is
square free, this tells us that $F_1$ divides $\alpha F_{1x}+\beta F_{1y}$
and, since both polynomials have the same degree and $F_1\neq0$, that there exists a
scalar $\lambda$ such that $\alpha F_{1x}+\beta F_{1y}=\lambda F_1$. Of
course, we can do the same with the other two factors~$F_2$ and~$F_3$. We
can state this by saying that the matrix
$\begin{psmallmatrix}\alpha_x&\beta_x\\ \alpha_y&\beta_y\end{psmallmatrix}$
has the three vectors
$\begin{psmallmatrix}F_{1x}\\F_{1y}\end{psmallmatrix}$,
$\begin{psmallmatrix}F_{2x}\\F_{2y}\end{psmallmatrix}$ and
$\begin{psmallmatrix}F_{3x}\\F_{3y}\end{psmallmatrix}$ as eigenvectors.
Since no two of these are linearly dependent, because $F$ is square-free,
this implies that the matrix
is in fact a scalar multiple of the identity, and there is a $\mu\in\kk$
such that $\alpha=\mu x$ and $\beta=\mu y$. The hypothesis is then that
$\mu (r+1)F=\mu(xF_x+yF_y)=0$, so that $\mu=0$. This proves the claim.
\end{proof}

\begin{Lemma}
If $\alpha_1$,~\dots,~$\alpha_{r+1}\in S_1$ are such that
$F=\prod_{i=1}^{r+1}\alpha_i$, then the set of quotients
  \(
  \{\tfrac{F}{\alpha_1},\dots,\tfrac{F}{\alpha_{r+1}}\}
  \)
is a basis for~$S_r$.

\end{Lemma}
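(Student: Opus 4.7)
The strategy is to use a dimension count together with a separation-of-points argument. Since $S_r$ is the space of homogeneous polynomials of degree $r$ in two variables, we have $\dim S_r = r+1$, which matches the number of elements in the proposed set. It therefore suffices to show that $F/\alpha_1,\dots,F/\alpha_{r+1}$ are linearly independent.

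First I would observe that because $F$ is square-free, the linear forms $\alpha_1,\dots,\alpha_{r+1}$ are pairwise non-proportional: if two of them were scalar multiples of each other, the corresponding factor would appear squared in $F$. Consequently, for each $j$ the polynomial $F/\alpha_j$ is divisible by every $\alpha_i$ with $i\neq j$ but is not divisible by $\alpha_j$.

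Next, to establish linear independence, suppose that $\sum_{i=1}^{r+1} c_i\, F/\alpha_i = 0$ for some scalars $c_i \in \kk$. Fix an index $j$ and choose a point $p_j \in V$ that lies on the line $\alpha_j = 0$ but on no other line of the arrangement; such a point exists because the $\alpha_i$ define pairwise distinct lines through the origin. Evaluating the relation at $p_j$, every term with $i \neq j$ vanishes because $\alpha_j(p_j)=0$ and $\alpha_j$ divides $F/\alpha_i$ when $i \neq j$. The surviving term is $c_j (F/\alpha_j)(p_j)$, and $(F/\alpha_j)(p_j) = \prod_{i\neq j}\alpha_i(p_j)\neq 0$ by the choice of $p_j$. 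Hence $c_j = 0$, and since $j$ was arbitrary, all coefficients vanish.

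This step is essentially the whole proof; there is no serious obstacle, just the observation that square-freeness of $F$ yields pairwise non-proportional factors, after which the standard trick of evaluating at a generic point of each line separates the quotients $F/\alpha_j$. The conclusion then follows from the dimension count $\dim S_r = r+1$.
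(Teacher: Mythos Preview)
Your proof is correct and takes essentially the same approach as the paper: both use the dimension count $\dim S_r=r+1$ and establish linear independence by isolating each coefficient $c_j$ via the observation that $\alpha_j$ divides $F/\alpha_i$ for $i\neq j$ but not $F/\alpha_j$. The only cosmetic difference is that the paper phrases this algebraically (reducing the relation modulo~$\alpha_j$) while you phrase it geometrically (evaluating at a nonzero point of the line $\alpha_j=0$); these are two ways of saying the same thing.
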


\begin{proof}
Suppose $c_1$,~\dots,~$c_{r+1}\in\kk$ are scalars such that
$\sum_{i=1}^{r+1}c_i\frac{F}{\alpha_i}=0$. If $j\in\{1,\dots,r+1\}$, we
then have $c_j\frac{F}{\alpha_j}\equiv0$ modulo~$\alpha_j$ and, since
$F$ is square-free, this implies that in~fact $c_j=0$. The set
$\{\tfrac{F}{\alpha_1},\dots,\tfrac{F}{\alpha_{r+1}}\}$ is therefore linearly
independent. Since $\dim S_r=r+1$, this completes the proof.
\end{proof}

\section{A projective resolution}
\label{sect:res}

\paragraph We keep the situation of the previous section, and write from
now on~$A$ instead of~$\D(\A)$. Our immediate objective is to construct a
projective resolution of~$A$ as an $A$\nobreakdash-bimodule, and we do this by looking
at~$A$ as a deformation of a commutative polynomial algebra, which suggests
that it should have a resolution resembling the usual Koszul complex.

\paragraph If $U$ is a vector space and~$u\in U$, there are
derivations $\nabla_x^u$,~$\nabla_y^u:S\to S\otimes U\otimes S$ of~$S$ into
the $S$-bimodule $S\otimes U\otimes S$ uniquely determined by the condition
that
  \begin{align}
  &   \nabla_x^u(x) = 1\otimes u\otimes 1,
  &&  \nabla_x^u(y) = 0,
  &&  \nabla_y^u(x) = 0,
  &&  \nabla_y^u(y) = 1\otimes u\otimes 1,
  \end{align}
and in fact we have, for every $i$,~$j\geq0$, that
  \begin{align}
  &  \nabla_x^u(x^iy^j) = \sum_{s+t+1=i}x^s\otimes u\otimes x^ty^j,
  && \nabla_y^u(x^iy^j) = \sum_{s+t+1=j}x^iy^s\otimes u\otimes y^s.
  \end{align}

We consider the derivation $\nabla=\nabla_x^x+\nabla_y^y:S\to S\otimes
S_1\otimes S$; it is the unique derivation such that
$\nabla(\alpha)=1\otimes\alpha\otimes 1$ for all $\alpha\in S_1$.
There is, on the other hand, a unique morphism of $S$-bimodules $d:S\otimes
S_1\otimes S\to S\otimes S$ such that
$d(1\otimes\alpha\otimes1)=\alpha\otimes1-1\otimes\alpha$ for all
$\alpha\in S_1$, and we have
  \[
  d(\nabla(f)) = f\otimes1-1\otimes f
  \]
for all $f\in S$. To check this last equality, it is enough to notice that
$d\circ\nabla:S\to S\otimes S$ is a derivation and, since $S_1$ generates
$S$ as an algebra, that the equality holds when $f\in S_1$.

\paragraph Let $V$ be the subspace of~$A$ spanned by $x$,~$y$,~$D$ and~$E$.
This is a graded subspace and its grading induces on the
exterior algebra~$\Lambda^\bullet(V)$ an internal grading.
If $\omega$ is an element of an exterior power $\Lambda^p(V)$ of~$V$, we write
$(\place)\wedge\omega$ the map of $A$-bimodules
  \[
  A\otimes S_1\otimes A\to A\otimes\Lambda^{p+1}V\otimes A
  \]
such that $(1\otimes\alpha\otimes 1)\wedge\omega=1\otimes
\alpha\wedge\omega\otimes 1$ for all $\alpha\in S_1$.

\paragraph\label{p:P} There is a chain complex~$\P$ of free graded
$A$-bimodules of the form
  \[ \label{eq:res}
  \begin{tikzcd}[column sep=1.75em]
  A|\Lambda^4V|A \arrow[r, "d_4"]
        & A|\Lambda^3V|A \arrow[r, "d_3"]
        & A|\Lambda^2V|A \arrow[r, "d_2"]
        & A|V|A \arrow[r, "d_1"]
        & A|A
  \end{tikzcd}
  \]
with $A^e$-linear maps homogeneous of degree zero and such that
\def\tmp{\color{blue}}
  \begin{gather*}
  d_1(1|v|1) = [v,1|1], \qquad\forall v\in V; \\[3pt]
  d_2(1|x\wedge y|1) = [x,1|y|1]-[y,1|x|1]; \\
  d_2(1|x\wedge E|1) = [x,1|E|1]-[E,1|x|1] + 1|x|1; \\
  d_2(1|y\wedge E|1) = [y,1|E|1]-[E,1|y|1] + 1|y|1; \\
  d_2(1|x\wedge D|1) = [x,1|D|1]-[D,1|x|1]; \\
  d_2(1|y\wedge D|1) = [y,1|D|1]-[D,1|y|1] + \nabla(F); \\
  d_2(1|D\wedge E|1) = [D,1|E|1]-[E,1|D|1] + r|D|1; \\[3pt]
  d_3(1|x\wedge y\wedge D|1)
        =  [x,1|y\wedge D|1] - [y,1|x\wedge D|1] + [D,1|x\wedge y|1] + \nabla(F)\wedge x;\\
  d_3(1|x\wedge y\wedge E|1)
        =  [x,1|y\wedge E|1] - [y,1|x\wedge E|1] + [E,1|x\wedge y|1] - 2|x\wedge y|1;  \\
  \!\begin{multlined}[0.85\displaywidth]
  d_3(1|x\wedge D\wedge E|1)
        =  [x,1|D\wedge E|1] - [D,1|x\wedge E|1] + [E,1|x\wedge D|1] \\
        - (r+1)|x\wedge D|1;
  \end{multlined}
        \\
  \!\begin{multlined}[0.9\displaywidth]
  d_3(1|y\wedge D\wedge E|1)
        =  [y,1|D\wedge E|1] - [D,1|y\wedge E|1] + [E,1|y\wedge D|1] \\
                + \nabla(F)\wedge E - (r+1)|y\wedge D|1;
  \end{multlined}
                        \\[3pt]
  d_4(1|x\wedge y\wedge D\wedge E|1) =
  \!\begin{multlined}[0.625\displaywidth][t]
  [x,1|y\wedge D\wedge E|1] - [y,1|x\wedge D\wedge E|1]  \\[3pt]
        + [D,1|x\wedge y\wedge E|1] - [E,1|x\wedge y\wedge D|1] \\[0pt]
        + \nabla(F)\wedge x\wedge E + (r+2)|x\wedge y\wedge D|1.
  \end{multlined}
  \end{gather*}
That $\P$ is indeed a complex follows from a direct calculation. More
interestingly, it is exact:

\begin{Lemma*}
The complex~$\P$ is a projective resolution of~$A$ as an $A$-bimodule, with
augmentation $d_0:A|A\to A$ such that $d_0(1|1)=1$.
\end{Lemma*}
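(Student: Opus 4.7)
The plan is to prove projectivity and then exactness separately. Projectivity is immediate: each $A\otimes \Lambda^p V\otimes A$ is a free $A$-bimodule of finite rank, with basis indexed by a basis of $\Lambda^p V$. Exactness I would establish by a standard PBW-deformation argument, showing that $\P$ is a filtered deformation of the bimodule Koszul resolution of a commutative polynomial algebra in four variables.

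First, I would assign to $x,y\in V$ the weight~$0$ and to $D,E\in V$ the weight~$1$, and put on $A$ the corresponding filtration by order of differential operator: $F_nA$ is the span of the PBW monomials $x^iy^jD^kE^l$ with $k+l\leq n$. The commutation relations from~\pref{lemma:ore} ensure that $[F_mA,F_nA]\subset F_{m+n-1}A$, so the associated graded $\gr A$ is commutative and canonically isomorphic to the polynomial algebra $\kk[x,y,\bar D,\bar E]$.

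I would then extend this filtration to $\P$ by assigning to each wedge $\omega=v_{i_1}\wedge\cdots\wedge v_{i_p}$ the weight $w(\omega)$ equal to the sum of the weights of its factors, and equipping $A\otimes \Lambda^pV\otimes A$ with the tensor-product filtration in which a basis element $a|\omega|b$ has degree $\deg a+w(\omega)+\deg b$. A direct inspection of the formulas in~\pref{p:P} shows that each $d_i$ preserves this filtration and that, in every formula, the ``principal'' commutator terms $[v,1|\sigma|1]$ attain the maximal degree, whereas the correction terms---namely the $\nabla(F)\wedge(\cdots)$ contributions and the scalar corrections such as $1|x|1$, $r|D|1$, $(r+1)|y\wedge D|1$ and $(r+2)|x\wedge y\wedge D|1$---have strictly smaller degree. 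Consequently, on the associated graded only the commutator terms survive, and they produce in the commutative algebra $\gr A$ exactly the standard Koszul differential. Hence $\gr\P$, augmented by~$d_0$, is the bimodule Koszul resolution of $\kk[x,y,\bar D,\bar E]$, which is well known to be exact.

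To conclude I would apply the standard principle that an exhaustively and non-negatively filtered complex is acyclic provided its associated graded is acyclic---a consequence, for instance, of the convergence of the first-quadrant spectral sequence of the filtration, or of a simple induction on filtration degree---to deduce the exactness of~$\P$. The main obstacle is the verification in the second step: one must check, term by term, that each of the many correction terms appearing in the formulas for~$d_2$, $d_3$ and~$d_4$ genuinely lowers the filtration degree. This relies on the specific fact that $F\in S$, so that $\nabla(F)$ lives entirely in the weight-zero part of $A\otimes V\otimes A$, and on the fact that the scalar corrections carry no $D$ or $E$ factors in positions that would raise the weight past that of the principal commutator terms. Once this bookkeeping is complete, the identification of $\gr\P$ with the Koszul complex is formal.
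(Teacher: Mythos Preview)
Your proposal is correct and follows essentially the same approach as the paper: both filter $A$ by order in $D$ and $E$ (equivalently, your weight assignment), extend this filtration to $\P$ via the induced filtration on $\Lambda^\bullet V$, verify by inspection that the differentials respect the filtration with the correction terms dropping in degree, and conclude by identifying $\gr\P$ with the bimodule Koszul resolution of $\gr A\cong\kk[x,y,D,E]$. The paper compresses your careful bookkeeping of the correction terms into the phrase ``as can be checked by inspection,'' but the argument is the same.
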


\begin{proof}
For each $p\in\NN_0$ we consider the subspace
  \(
  \F_pA = \lin{x^iy^jD^kE^l:k+l\leq p}
  \)
of~$A$. As a consequence of Lemma~\ref{lemma:ore},
one sees that $\F A=(\F_pA)_{p\geq0}$ is an exhaustive and increasing algebra
filtration on~$A$ and that the corresponding associated graded algebra~$\gr(A)$ is
isomorphic to the usual commutative polynomial ring $\kk[x,y,D,E]$. Since
$V$ is a subspace of~$A$, we can restrict the filtration of~$A$ to one
on~$V$, and the latter induces as usual a filtration on each exterior
power~$\Lambda^pV$. In this way we obtain a filtration on each component
of the complex~$\P$, which turns out to be  compatible with its
differentials, as can be checked by inspection. The complex~$\gr(\P)$
obtained from~$\P$ by passing to associated graded objects in each degree
is isomorphic to the Koszul resolution of~$\gr(A)$ as a $\gr(A)$-bimodule
and it is therefore acyclic over~$\gr(A)$. A standard argument using the
filtration of~$\P$ concludes from this that the complex~$\P$ itself acyclic over~$A$.
As its components are manifestly free $A$-bimodules, this proves the lemma.
\end{proof}

\paragraph One almost immediate application of having a bimodule projective
resolution for our algebra is in computing its global dimension.

\begin{Proposition*}\label{prop:gldim}
The global dimension of~$A$ is equal to~$4$.
\end{Proposition*}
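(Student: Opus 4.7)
The plan is to sandwich the dimension: prove $\gldim A\leq 4$ and $\gldim A\geq 4$.

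The upper bound is immediate from the resolution~$\P$. Its components are free, hence flat, as right $A$-modules, so tensoring~$\P$ on the right with any left $A$-module~$M$ yields a length-$4$ projective resolution of $M=A\otimes_A M$ as a left $A$-module. Hence $\pdim_A M \leq 4$ for every~$M$, and $\gldim A \leq 4$.

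For the lower bound, I would exhibit a left $A$-module of projective dimension exactly~$4$. The natural candidates are the one-dimensional modules~$\kk_e$ (with $e\in\kk$) on which $x$,~$y$ and $D$ act as zero and $E$ as the scalar~$e$; the relations of~$A$ are satisfied trivially because $F$ is homogeneous of positive degree and thus lies in the augmentation ideal of~$S$. Tensoring~$\P$ on the right by~$\kk_e$ gives a length-$4$ free resolution of~$\kk_e$ whose $p$-th term is $A\otimes\Lambda^p V$, and so $\Ext^\bullet_A(\kk_e,\kk_{e'})$ is computed by the cohomology of $\hom_A(\P\otimes_A\kk_e,\kk_{e'})\cong(\Lambda^\bullet V)^*$. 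The key calculation is of the top differential $d_4^*\colon(\Lambda^3V)^*\to(\Lambda^4V)^*$: applying the augmentations $\epsilon_e$ on the right and $\epsilon_{e'}$ on the left in the formula for $d_4(1|x\wedge y\wedge D\wedge E|1)$, most terms vanish --- in particular the $\nabla(F)\wedge x\wedge E$ contribution, since $F$, $\bar F(x,0)$ and $y^r$ all lie in the augmentation ideal of~$S$ --- leaving only the commutator $-[E,1|x\wedge y\wedge D|1]$ and the correction $(r+2)|x\wedge y\wedge D|1$. These combine into $d_4^*(\psi)=(e+r+2-e')\,\psi(x\wedge y\wedge D)$ times the generator of~$(\Lambda^4V)^*$. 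Choosing $e'=e+r+2$ kills $d_4^*$, so $\Ext^4_A(\kk_e,\kk_{e+r+2})\cong(\Lambda^4V)^*\neq 0$ and $\pdim_A\kk_e\geq 4$, giving $\gldim A\geq 4$.

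The principal delicate point is the bookkeeping in~$d_4^*$, i.e.\ verifying that each term in the long formula for $d_4$ reduces as described under the two augmentations. The standing hypothesis $r\geq 3$ is what guarantees $r+2\neq 0$, so that a suitable~$e'$ exists in~$\kk$.
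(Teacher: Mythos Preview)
Your proof is correct and follows essentially the same route as the paper: the upper bound comes from the length-$4$ bimodule resolution~$\P$ (you tensor it down to a module resolution; the paper cites the equivalent inequality $\gldim A\leq\pdim_{A^e}A$), and the lower bound comes from exhibiting $\Ext^4_A(\kk_e,\kk_{e+r+2})\neq0$ for the one-dimensional modules~$\kk_e$, which is exactly the paper's computation with~$M_\lambda$. One small correction: the hypothesis $r\geq3$ is not yet in force at this point of the paper (that restriction enters only in Section~\ref{sect:hh}), and in any case your closing remark is unnecessary---setting $e'=e+r+2$ always kills the relevant coefficient, whether or not $r+2$ vanishes.
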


Of course, as $A$ is noetherian, there is no need to distinguish between the left and
the right global dimensions.

\begin{proof}
If $\lambda\in\kk$ let $M_\lambda$ be the left $A$-module which as a vector
space is freely spanned by an element~$u_\lambda$ and on which the action
of~$A$ is such that $x\cdot u_\lambda=y\cdot u_\lambda=D\cdot u_\lambda=0$
and $E\cdot u_\lambda=\lambda u_\lambda$. It is easy to see that all
$1$-dimensional $A$-modules are of this form and that $M_\lambda\cong
M_\mu$ iff $\lambda=\mu$, but we will not need this.

The complex $\P\otimes_AM_\lambda$ is a projective resolution
of~$M_\lambda$ as a left $A$-module, and therefore the cohomology of
$\hom_A(\P\otimes_AM_\lambda,M_\mu)$ is canonically isomorphic to
$\Ext_A^\bullet(M_\lambda,M_\mu)$. Identifying as usual
$\hom_A(\P\otimes_AM_\lambda,M_\mu)$ to $M_\mu\otimes\Lambda^\bullet
V^*\otimes M_\lambda^*$, we compute that the complex is
  \begin{multline}
  \begin{tikzcd}[ampersand replacement=\&, cramped]
  M_\mu\otimes M_\lambda^*
        \arrow[r, "\delta^0"]
    \& M_\mu\otimes V^*\otimes M_\lambda^*
        \arrow[r, "\delta^1"]
    \& M_\mu\otimes \Lambda^2V^*\otimes M_\lambda^*
        \arrow[r, "\delta^2"]
    \& {}
 \end{tikzcd}
 \\
 \begin{tikzcd}[ampersand replacement=\&, cramped]
 {}
        \arrow[r]
    \& M_\mu\otimes \Lambda^3V^*\otimes M_\lambda^*
        \arrow[r, "\delta^3"]
    \& M_\mu\otimes \Lambda^4V^*\otimes M_\lambda^*
  \end{tikzcd}
  \end{multline}
with differentials given by
  \begin{align*}
  \MoveEqLeft
  \delta^0(1)
        = (\mu-\lambda)\otimes\hat E, \\
  \MoveEqLeft
  \delta^1(a\otimes\hat x+b\otimes\hat y+c\otimes\hat D+d\otimes\hat D) \\
       &= (\lambda+1-\mu)a\otimes\hat x\wedge\hat E
          +(\lambda+1-\mu)b\otimes\hat y\wedge\hat E
          +(\lambda+r-\mu)c\otimes\hat D\wedge\hat E, 
  \\
  \MoveEqLeft
  \delta^2(a\otimes\hat x\wedge\hat y+b\otimes\hat x\wedge\hat E
        +c\otimes\hat y\wedge\hat E+d\otimes\hat x\wedge\hat D
        +e\hat y\wedge\hat D+f\hat D\wedge\hat E) \\
       &= \begin{multlined}[.8\displaywidth][t]
          (\mu-\lambda-2)a\otimes\hat x\wedge\hat y\wedge\hat E
          + (\mu-\lambda-r-1)d\otimes\hat x\wedge\hat D\wedge\hat E \\
          + (\mu-\lambda-r-1)e\otimes\hat y\wedge\hat D\wedge\hat E,
          \end{multlined}
  \\
  \MoveEqLeft
  \delta^3(a\otimes\hat x\wedge\hat y\wedge\hat D
        +b\otimes\hat x\wedge\hat y\wedge\hat E
        +c\otimes\hat x\wedge\hat D\wedge\hat E
        +d\otimes\hat y\wedge\hat D\wedge\hat E) \\
      &= (\lambda+r+2-\mu)a\otimes\hat x\wedge\hat y\wedge\hat D\wedge\hat E.
  \end{align*}
An easy computation shows that
  \[
  \dim\Ext^p_A(M_\lambda,M_{\lambda+r+2}) =
    \begin{cases*}
    1, & if $p=3$ or $p=4$; \\
    0, & in any other case.
    \end{cases*}
  \]
In particular, $\Ext_A^4(M_\lambda,M_{\lambda+r+2})\neq0$ and therefore
$\gldim A\geq4$. On the other hand, we have constructed a projective
resolution of~$A$ as an $A$-bimodule of length~$4$, so that the projective
dimension of~$A$ as a bimodule is
$\pdim_{A^e}A\leq 4$. Since $\gldim A\leq\pdim_{A^e}A$, the proposition
follows from this.
\end{proof}

\section{The Hochschild cohomology of \texorpdfstring{$\D(\A)$}{D(A)}}
\label{sect:hh}

\paragraph\plabel{p:hhdifferentials} We want to compute the Hochschild
cohomology of the algebra~$A$. Applying the functor $\hom_{A^e}(\place,A)$
to the resolution~$\P$ of~\pref{p:P} we get, after standard
identifications, the cochain complex
  \[
  \begin{tikzcd}
  A \arrow[r, "d^0"]
        & A\otimes V^* \arrow[r, "d^1"]
                \arrow[l, shift left=1ex, dashed, "s^1"]
        & A\otimes \Lambda^2V^* \arrow[r, "d^1"]
                \arrow[l, shift left=1ex, dashed, "s^2"]
        & A\otimes \Lambda^3V^* \arrow[r, "d^2"]
                \arrow[l, shift left=1ex, dashed, "s^3"]
        & A\otimes \Lambda^4V^* \arrow[r]
                \arrow[l, shift left=1ex, dashed, "s^4"]
        & 0
  \end{tikzcd}
  \]
which we denote simply by $A\otimes\Lambda V^*$, with differentials such that
  \begin{gather*}
  d^0(a)
              = [x,a]\otimes \hat x
                +[y,a]\otimes \hat y
                +[D,a]\otimes \hat D
                +[E,a]\otimes \hat E;
                \\[5pt]
  \!\begin{multlined}[0.9\displaywidth]
  d^1(a\otimes \hat x)
              = -[y,a]\otimes \hat x\wedge\hat y
                +(a-[E,a])\otimes \hat x\wedge\hat E
                -[D,a]\otimes \hat x\wedge\hat D
                \\
                +\nabla_x^a(F)\otimes \hat y\wedge\hat D;
  \end{multlined}                \\
  d^1(a\otimes \hat y)
             = [x,a]\otimes \hat x\wedge\hat y
               +(a-[E,a])\otimes \hat y\wedge\hat E
               +(\nabla_y^a(F)-[D,a])\otimes \hat y\wedge\hat D;
               \\
  d^1(a\otimes \hat D)
            = [x,a]\otimes \hat x\wedge\hat D
              +[y,a]\otimes \hat y\wedge\hat D
              +(ra-[E,a])\otimes \hat D\wedge\hat E;
              \\
  d^1(a\otimes \hat E)
            = [x,a]\otimes \hat x\wedge\hat E
              + [y,a]\otimes \hat y\wedge\hat E
              + [D,a]\otimes \hat D\wedge\hat E;
              \\[5pt]
  d^2(a\otimes \hat x\wedge\hat y)
               = ([D,a] - \nabla_y^a(F))\otimes \hat x\wedge\hat y\wedge\hat D
                 + ([E,a]-2a)\otimes \hat x\wedge\hat y\wedge\hat E;
                \\
  d^2(a\otimes \hat x\wedge\hat E)
              = -[y,a]\otimes \hat x\wedge\hat y\wedge\hat E
                -[D,a]\otimes \hat x\wedge\hat D\wedge\hat E
                +\nabla_x^a(F)\otimes \hat y\wedge\hat D\wedge\hat E;
                \\
  d^2(a\otimes \hat y\wedge\hat E)
             = [x,a]\otimes \hat x\wedge\hat y\wedge\hat E
               +(\nabla_y^a(F)-[D,a])\otimes \hat y\wedge\hat D\wedge\hat E;
                \\
  d^2(a\otimes \hat x\wedge\hat D)
             = -[y,a]\otimes \hat x\wedge\hat y\wedge\hat D
               +([E,a]-(r+1)a)\otimes \hat x\wedge\hat D\wedge\hat E;
                \\
  d^2(a\otimes \hat y\wedge\hat D)
             = [x,a]\otimes \hat x\wedge\hat y\wedge\hat D
               +([E,a]-(r+1)a)\otimes \hat y\wedge\hat D\wedge\hat E;
                \\
  d^2(a\otimes \hat D\wedge\hat E)
             = [x,a]\otimes \hat x\wedge\hat D\wedge\hat E
               + [y,a]\otimes \hat y\wedge\hat D\wedge\hat E;
                \\[5pt]
  d^3(a\otimes \hat x\wedge\hat y\wedge\hat D)
             = (-[E,a]+(r+2)a)\otimes \hat x\wedge\hat y\wedge\hat D\wedge\hat E;
                \\
  d^3(a\otimes \hat x\wedge\hat y\wedge\hat E)
             = ([D,a]-\nabla_y^a(F))\otimes \hat x\wedge\hat y\wedge\hat D\wedge\hat E;
                \\
  d^3(a\otimes \hat x\wedge\hat D\wedge\hat E)
             = -[y,a]\otimes \hat x\wedge\hat y\wedge\hat D\wedge\hat E;
                \\
  d^3(a\otimes \hat y\wedge\hat D\wedge\hat E)
             = [x,a]\otimes \hat x\wedge\hat y\wedge\hat D\wedge\hat E.
  \end{gather*}
These differentials are homogeneous with respect to the natural
internal grading on the complex~$A\otimes\Lambda V^*$ coming from the
grading of $A$. We denote
$\gamma: A\otimes\Lambda V^*\to A\otimes\Lambda V^*$ the $\kk$-linear map whose
restriction to each homogeneous component
of the complex~$A\otimes\Lambda V^*$ is simply the multiplication by the
degree. There is a homotopy, drawn in
the diagram~\eqref{eq:res} with dashed arrows, with
  \begin{gather*}
  s^1(a\otimes\hat x+b\otimes\hat y+c\otimes\hat D+d\otimes\hat E)
        = d, \\
  \!\begin{multlined}[0.9\displaywidth]
  s^2(a\otimes \hat x\wedge\hat y
      + b\otimes \hat x\wedge\hat E
      + c\otimes \hat y\wedge\hat E
      + d\otimes \hat x\wedge\hat D
      + e\otimes \hat y\wedge\hat D
      + f\otimes \hat D\wedge\hat E) \\
        = -b\otimes\hat x-c\otimes\hat y-f\otimes\hat D,
  \end{multlined} \\
  \!\begin{multlined}[0.9\displaywidth]
  s^3(a\otimes \hat x\wedge\hat y\wedge\hat D
      + b\otimes \hat x\wedge\hat y\wedge\hat E
      + c\otimes \hat x\wedge\hat D\wedge\hat E
      + d\otimes \hat y\wedge\hat D\wedge\hat E)
        \\
        = b\otimes\hat x\wedge\hat y + c\otimes\hat x\wedge\hat D
          + d\otimes\hat y\wedge\hat D,
  \end{multlined} \\
  s^4(a\otimes\hat x\wedge\hat y\wedge\hat D\wedge\hat E)
        = -a\otimes\hat x\wedge\hat y\wedge\hat D
  \end{gather*}
and such that $d\circ s+s\circ d = \gamma$: this tells us that $\gamma$
induces the zero map on cohomology. Since our
ground field~$\kk$ has characteristic zero, this implies that the inclusion
$(A\otimes\Lambda V^*)_0\to A\otimes\Lambda V^*$ of the component of degree
zero of our complex $A\otimes\Lambda V^*$ is a quasi-isomorphism.

\paragraph\label{p:r3} \textbf{From now on and until the end of this
section, we will assume that $r\geq3$.} Let us write the complex
$(A\otimes\Lambda V^*)_0$ simply $\X$ and let us put $T=\kk[E]$, which
coincides with $A_0$. The complex~$\X$ has components
  \begin{gather*}
  \X^0 = A_0, \\
  \X^1 = A_1\otimes(\kk\hat x\oplus\kk\hat y)
        \oplus A_r\otimes\kk\hat D
        \oplus A_0\otimes\kk\hat E, \\
  \!\begin{multlined}[0.9\displaywidth]
  \X^2 = A_2\otimes\kk\hat x\wedge\hat y
        \oplus A_1\otimes(\kk\hat x\wedge\hat E\oplus\kk\hat y\wedge\hat E)
        \oplus A_r\otimes\kk\hat D\wedge\hat E
        \\
        \oplus A_{r+1}\otimes(\kk\hat x\wedge\hat D\oplus\kk\hat y\wedge\hat D),
  \end{multlined}\\
  \begin{multlined}[0.9\displaywidth]
  \X^3 = A_2\otimes\hat x\wedge\hat y\wedge\hat E
        \oplus A_{r+1}\otimes(\kk\hat x\wedge\hat D\wedge\hat E
        \oplus\kk\hat y\wedge\hat D\wedge\hat E) \\
        \oplus A_{r+2}\otimes\kk\hat x\wedge\hat y\wedge\hat D, 
  \end{multlined}
        \\
  \X^4 = A_{r+2}\otimes\hat x\wedge\hat y\wedge\hat D\wedge\hat E
  \end{gather*}
and, since $r>2$, we have
  \begin{align}
  &  A_0 = T,
  && A_1 = S_1T,
  && A_2 = S_2T, \\
  &  A_r = (S_r\oplus\kk D)T,
  && A_{r+1} = (S_{r+1}\oplus S_1D)T,
  && A_{r+2} = (S_{r+2}\oplus S_2D)T.
  \end{align}
In fact, this is where our assumption that $r\geq3$ intervenes: if $r\leq
2$, then these subspaces of~$A$ have different descriptions.

The differentials in~$\X$ can be computed to be given by 
  \begin{gather*}
  \delta^0(a) = x\tau_1(a)\otimes\hat x+y\tau_1(a)\otimes\hat y 
                        + D\tau_r(a)\otimes\hat D, \\[5pt]
  \delta^1(\phi a\otimes\hat x)
        = -\phi y\tau_1(a)\otimes\hat x\wedge\hat y
          -(F\phi_ya+\phi D\tau_r(a))\otimes\hat x\wedge\hat D
          +\nabla_x^{\phi a}(F)\otimes\hat y\wedge\hat D, \\
  \delta^1(\phi a\otimes\hat y)
        = \phi x\tau_1(a)\otimes\hat x\wedge\hat y
          +(\nabla_y^{\phi a}(F)-F\phi_ya-\phi D\tau_r(a))\otimes\hat y\wedge\hat D, \\
  \!\begin{multlined}[.9\displaywidth]
  \delta^1((\phi +\lambda D)a\otimes\hat D)
        = (\phi x\tau_1(a)+\lambda xD\tau_1(a))\otimes\hat x\wedge\hat D \\
          + (\phi y\tau_1(a)+\lambda F(\tau_1(a)-a)+\lambda yD\tau_1(a))
          \otimes\hat y\wedge\hat D,
  \end{multlined} \\
  \delta^1(a\otimes\hat E)
        = x\tau_1(a)\otimes\hat x\wedge\hat E
          +  y\tau_1(a)\otimes\hat y\wedge\hat E
          +  D\tau_r(a)\otimes\hat D\wedge\hat E, \\[5pt]
  \delta^2(\phi a\otimes\hat x\wedge\hat y)
        = (F\phi_ya+\phi D\tau_r(a)-\nabla_y^{\phi a}(F))
                \otimes\hat x\wedge\hat y\wedge\hat D, \\
  \!\begin{multlined}[0.9\displaywidth]
  \delta^2(\phi a\otimes\hat x\wedge\hat E)
        = -\phi y\tau_1(a)\otimes\hat x\wedge\hat y\wedge\hat E
          -(F\phi_ya+\phi D\tau_r(a))\otimes\hat x\wedge\hat D\wedge\hat E
          \\
          +\nabla_x^{\phi a}(F)\otimes\hat y\wedge\hat D\wedge\hat E,
  \end{multlined} \\
  \!\begin{multlined}[0.9\displaywidth]
  \delta^2(\phi a\otimes\hat y\wedge\hat E)
        = \phi x\tau_1(a)\otimes\hat x\wedge\hat y\wedge\hat E \\
          +(\nabla_y^{\phi a}(F)-F\phi_ya-\phi D\tau_r(a))
                \otimes\hat y\wedge\hat D\wedge\hat E, 
  \end{multlined} \\
  \delta^2((\phi+\psi D)a\otimes\hat x\wedge\hat D)
        = (-\phi y\tau_1(a)-\psi F(\tau_1(a)-a)-\psi yD\tau_1(a))
          \otimes\hat x\wedge\hat y\wedge\hat D, \\
  \delta^2((\phi+\psi D)a\otimes\hat y\wedge\hat D)
        = (\phi x\tau_1(a)+\psi xD\tau_1(a))
                \otimes\hat x\wedge\hat y\wedge\hat D, \\
  \!\begin{multlined}[.9\displaywidth]
  \delta^2((\phi+\lambda D)a\otimes\hat D\wedge\hat E)
        = (\phi x\tau_1(a)+\lambda xD\tau_1(a))
                \otimes\hat x\wedge\hat D\wedge\hat E \\
          +(\phi y\tau_1(a)+\lambda yD\tau_1(a)+\lambda F(\tau_1(a)-a))
          \otimes\hat y\wedge\hat D\wedge\hat E,
  \end{multlined} \\[5pt]
  \delta^3((\phi+\psi D)a\otimes\hat x\wedge\hat y\wedge\hat D)
        = 0, \\
  \delta^3(\phi a\otimes\hat x\wedge\hat y\wedge\hat E)
        = (F\phi_ya+\phi D\tau_r(a)-\nabla_y^{\phi a}(F))
          \otimes\hat x\wedge\hat y\wedge\hat D\wedge\hat E, \\
  \!\begin{multlined}[.9\displaywidth]
  \delta^3((\phi+\psi D)a\otimes\hat x\wedge\hat D\wedge\hat E)
        \\
        = -(\phi y\tau_1(a)+\psi y D\tau_1(a) +\psi F(\tau_1(a)-a))
                \otimes\hat x\wedge\hat y\wedge\hat D\wedge\hat E,
  \end{multlined} \\
  \delta^3((\phi+\psi D)a\otimes\hat y\wedge\hat D\wedge\hat E)
        = (\phi x\tau_1(a)+\psi xD\tau_1(a))
                \otimes\hat x\wedge\hat y\wedge\hat D\wedge\hat E.
  \end{gather*}
Here and below $\tau_t:T\to  T$ is the $\kk$-linear map such that
$\tau_t(E^n)=E^n-(E+t)^n$ for all $n\in\NN_0$, and $\phi$ and $\phi$ denote
homogeneous elements of~$r$ of appropriate degrees and $\lambda$ a scalar.

\paragraph We proceed to compute the cohomology of the complex~$\X$,
starting with degrees zero and four, for which the computation is almost
immediate. Indeed, since the kernel of $\tau_1$ and of $\tau_r$ is
$\kk\subseteq T$, it is clear that $H^0(\X)=\ker\delta^0=\kk$. On the other
hand, if $\psi\in S_2$ and $a\in T$, we can write $\psi=\psi_1x+\psi_2y$
for some $\psi_1$,~$\psi_2\in S_1$ and there is a $b\in T$ such that
$\tau_1(b)=a$, so that
  \[
  \delta^3(-\psi_2Db\otimes\hat x\wedge\hat D\wedge\hat E
     + \psi_1Db\otimes\hat y\wedge\hat D\wedge\hat E)
       = (\psi Da + \something{S_{r+2}T})
                \otimes\hat x\wedge\hat y\wedge\hat D\wedge\hat E.
  \]
Similarly, we have
  \(
  \delta^3(S_{r+1}T\otimes\hat x\wedge\hat D\wedge\hat E
           +S_{r+1}T\otimes\hat y\wedge\hat D\wedge\hat E)
        = S_{r+2}T\otimes\hat x\wedge\hat y\wedge\hat D\wedge\hat E
  \).
These two facts imply that the map~$\delta^3$ is surjective, so that
$H^4(\X)=0$.

\paragraph Let $\omega\in \X^1$ be a $1$-cocycle in~$\X$. There are then
$a$,~$b$,~$c$,~$d$,~$e$,~$f\in T$, $k\in\NN_0$ and
$\phi_0$,~\dots,~$\phi_k\in S_{r}$ such that either $k=0$ or $\phi_k\neq0$,
and
  \[
  \omega
    = (xa+yb)\otimes\hat x
     +(xc+yd)\otimes\hat y
     +\left(\sum_{i=0}^k\phi_i E^i+De\right)\otimes\hat D
     +f\otimes\hat E.
  \]
If $\bar e\in T$ is such that $\tau_r(\bar e)=e$, then by replacing
$\omega$ by $\omega-\delta^0(\bar e)$, which does not change the cohomology
class of~$\omega$, we can assume that $e=0$. The
formula for~$\delta^0$ then shows that $\omega$ is a coboundary iff it is
equal to zero. The coefficient of~$\hat x\wedge\hat y$ in~$\delta^1(\omega)$ is
  \[
  x^2\tau_1(c) + xy(\tau_1(d)-\tau_1(a)) - y^2\tau_1(b) = 0.
  \]
We therefore have $b$,~$c$,~$d-a\in\kk$. The coefficient of~$\hat D\wedge\hat E$,
on the other hand, is $D\tau_r(f)=0$, so that also $f\in\kk$; exactly the
same information comes from the vanishing of the coefficients of~$\hat x\wedge\hat
E$ and of~$\hat y\wedge\hat E$. Since $b\in\kk$, 
the coefficient of~$\hat x\wedge\hat D$ is
  \[
  -Fb-xD\tau_r(a)+\sum_{i=0}^k \phi_ix\tau_1(E^i) = 0.
  \]
We see that $\tau_r(a)=0$,
so that $a\in\kk$, and that $\sum_{i=0}^k \phi_ix\tau_1(E^i)=Fb$. This
implies that $k\leq1$, that $-\phi_1x=Fb$
and therefore, since $x$ is not a factor of~$F$ by hypothesis, that
$\phi_1=0$ and $b=0$.

Finally, using all the information we have so far, we can see that
the vanishing of the
coefficient of $\hat y\wedge\hat D$ in $\delta^1(\omega)$ implies that
$F_xxa + F_y(xc+yd) = Fd$. Together with Euler's relation
$F_xx+F_yy=(r+1)F$ this tells us that
  \[ \label{eq:qq}
  (cx+(d-a)y)F_y = (d-(r+1)a)F.
  \]
As $F$ is square-free, it follows\footnote{Suppose that $u=cx+(d-a)y$ is not zero.
Differentiating in~\eqref{eq:qq} with respect to~$y$, we find that
$-raF_y=uF_{yy}$. Since $x$ does not divide~$F$, we have $F_{yy}\neq0$, and
then $a\neq0$ and $u$ divides~$F_y$: from~\eqref{eq:qq} it follows then
that $u^2$ divides~$F$, since the left hand side of that equality is
non-zero, and this is absurd because $F$ is square-free.}
from this equality the polynomial $cx+(d-a)y$ is zero
so that $c=0$ and $d=a$ and, finally, that $a=0$.
We conclude in this way that the set of $1$-cocycles
  \[
  \phi\otimes\hat D+f\otimes\hat E, \qquad \phi\in S_r,f\in\kk
  \]
is a complete, irredundant set of representatives for the elements of
$H^1(\X)$.

\paragraph\label{p:3-cocycles} Let $\omega\in \X^3$ be a $3$-cocycle, so that
  \[
  \omega
        = a\otimes\hat x\wedge\hat y\wedge\hat D
        + b\otimes\hat x\wedge\hat y\wedge\hat E
        + c\otimes\hat x\wedge\hat D\wedge\hat E
        + d\otimes\hat y\wedge\hat D\wedge\hat E
  \]
for some $a\in (S_{r+2}\oplus S_2D)T$, $b\in S_2T$, $c$,~$d\in (S_{r+1}\oplus
S_1D)T$ and $\delta^3(\omega)=0$.
For all $\phi\in S_1$ and $e\in T$ we have
  \begin{gather}
  \!\begin{multlined}[.9\displaywidth]
  \delta^2(\phi e\otimes\hat x\wedge\hat E)
        = - \phi y\tau_1(e)\otimes\hat x\wedge\hat y\wedge\hat E
                + \something{A_{r+1}}\otimes\hat x\wedge\hat D\wedge\hat E \\
                + \something{A_{r+1}}\otimes\hat y\wedge\hat D\wedge\hat E 
  \end{multlined} \\
\shortintertext{and}
  \delta^2(\phi e\otimes\hat y\wedge\hat E)
        = \phi x\tau_1(e)\otimes\hat x\wedge\hat y\wedge\hat E
                + \something{A_{r+1}}\otimes\hat y\wedge\hat D\wedge\hat E,
  \end{gather}
so that by adding to $\omega$ an element of 
$\delta^2(S_1T\otimes\hat x\wedge\hat E+S_1T\otimes\hat y\wedge\hat E)$, 
which does not change the cohomology class of~$\omega$, we can
suppose that $b=0$. Similarly, for all $\phi\in S_2$ and all $e\in T$ we
have
  \begin{gather}
  \delta^2(\phi e\otimes\hat x\wedge\hat y) = 
        (\something{S_{r+2}T} + \phi D\tau_r(e))\otimes\hat x\wedge\hat y\wedge\hat D, \\
\intertext{and for all $\phi\in S_{r+1}$ and all $e\in T$ we have}
  \delta^2(\phi e\otimes\hat x\wedge\hat D) 
        = -\phi y \tau_1(e)\otimes\hat x\wedge\hat y\wedge\hat D
\shortintertext{and}
  \delta^2(\phi e\otimes\hat y\wedge\hat D) 
        = \phi x\tau_1(e)\otimes\hat x\wedge\hat y\wedge\hat D.
  \end{gather}
Using this we see that, up to changing~$\omega$ by adding to it a
$3$-coboundary, we can suppose that $a=0$. Finally, for each $\phi\in S_r$
and all $e\in T$ we have
  \begin{gather}
  \delta^2(\phi e\otimes\hat D\wedge\hat E)
        = \phi x\tau_1(e)\otimes\hat x\wedge\hat D\wedge\hat E
          + \something{A_{r+1}} \otimes\hat y\wedge\hat D\wedge\hat E,
          \\
  \delta^2(De\otimes\hat D\wedge\hat E)
        = xD\tau_1(e)\otimes\hat x\wedge\hat D\wedge\hat E
          + \something{A_{r+1}}\otimes\hat y\wedge\hat D\wedge\hat E
\shortintertext{and}
  \delta^2(-y\otimes\hat x\wedge\hat E+\bar FE\otimes\hat D\wedge\hat E)
        = y^{r+1}\otimes\hat x\wedge\hat D\wedge\hat E
          +\something{A_{r+1}}\otimes\hat y\wedge\hat D\wedge\hat E,
  \end{gather}
so we can also suppose that $c\in y^{r+1}ET+yDT$.

There are $l\geq0$, $\lambda_1$,~\dots,~$\lambda_l$, $\mu_0$,~\dots,~$\mu_l\in\kk$,
$\phi_0$,~\dots,~$\phi_l\in S_{r+1}$, $\psi_0$,~\dots,~$\psi_l\in S_1$,
$\zeta_0$,~\dots,~$\zeta_l\in S_1$ such that
$c=\sum_{i=1}^l\lambda_i y^{r+1}E^i+\sum_{i=0}^l\mu_i yDE^i$ and
$d=\sum_{i=0}^l(\phi_i+\psi_iD)E^i$. The vanishing of
$\delta^3(\omega)$ means precisely that
  \begin{equation*}
  \sum_{i=1}^l\lambda_iy^{r+2}\tau_1(E^i)
  +
  \sum_{i=0}^l
        \Bigl(
        \mu_iy^2D\tau_1(E^i)
        -\mu_iyF(E+1)^i
        -\phi_ix\tau_1(E^i)
        -\psi_ixD\tau_1(E^i)
        \Bigr)
        = 0.
  \end{equation*}
The left hand side of this equation is an element of $S_{r+2}T\oplus
S_2DT$. The component in~$S_2DT$ is
  \(
  \sum_{i=0}^l(\mu_iy^2-\psi_ix)D\tau_1(E^i) = 0
  \)
and therefore $\mu_i=\psi_i=0$ for all $i\in\{1,\dots,l\}$. On the other
hand, the component in~$S_{r+2}T$ is
  \[
  \sum_{i=1}^l\lambda_iy^{r+2}\tau_1(E^i)
  - \mu_0yF - \sum_{i=0}^l\phi_ix\tau_1(E^i) = 0.
  \]
This implies that $\lambda_iy^{r+2}-\phi_ix=0$ if $i\in\{2,\dots,l\}$,
so that $\lambda_i=\phi_i=0$ for such $i$, and then the equation reduces to
$\lambda_1y^{r+2}+\mu_0yF-\phi_1x=0$.
Recalling from~\pref{p:start} that $F=y^{r+1}+x\bar F$,
we deduce from this that $\lambda_1=-\mu_0$ and $\phi_1=\mu_0y\bar F$.
We conclude in this way that every $3$-cocycle is cohomologous to one of
the form
  \[ \label{eq:q3}
  (\mu_0yD-\mu_0y^{r+1}E)\hat x\wedge\hat D\wedge\hat E
  + (\phi_0+\psi_0D+\mu_0y\bar FE)\hat y\wedge\hat D\wedge\hat E
  \]
with $\mu_0\in\kk$, $\phi_0\in S_{r+1}$ and $\psi_0\in S_1$, and a direct
computation shows that moreover every $3$-cochain of this form is a $3$-cocycle.

Let now $\eta$ be a $2$-cochain~$\eta$ in~$\X$, so that
  \begin{multline}
  \eta = \something{A_2\otimes\kk\hat x\wedge\hat y
        \oplus A_{r+1}\otimes(\kk\hat x\wedge\hat D\oplus\kk\hat y\wedge\hat D)} 
        + u\otimes\hat x\wedge\hat E 
        + v\otimes\hat y\wedge\hat E \\
        + w\otimes\hat D\wedge\hat E
  \end{multline}
with $u$,~$v\in A_1$ and $w\in A_{r}$, and let us
suppose that $\delta^2(\eta)$ is equal to the $3$-cocycle~\eqref{eq:q3}.
There are $l\geq0$, $\alpha_0$,~\dots,~$\alpha_l$, $\beta_0$,~\dots,~$\beta_l\in S_1$,
$\gamma_0$,~\dots,~$\gamma_l\in S_r$ and $\xi_0$,~\dots,~$\xi_l\in\kk$
such that $u=\sum_{i=0}^l\alpha_iE^i$, $v=\sum_{i=0}^l\beta_iE^i$ and
$w=\sum_{i=0}^l(\gamma_i+\xi_iD)E^i$. The coefficient of~$\hat x\wedge\hat
y\wedge\hat E$ in~$\delta^2(\eta)$ must be equal to zero, so that
  \[
  \sum_{i=0}^l(-\alpha_iy+\beta_ix)\tau_1(E^i) = 0,
  \]
and this implies that there are scalars $\rho_1$,~\dots,~$\rho_l\in\kk$ such that
$\alpha_i=\rho_i x$ and $\beta_i=\rho_i y$ for all $i\in\{1,\dots,l\}$.

Looking now at the coefficient of $\hat x\wedge\hat D\wedge\hat E$ in~$\delta^2(\eta)$
and comparing with~\eqref{eq:q3} we find that
  \[ \label{eq:qt}
  \sum_{i=0}^l\left(-F\alpha_{iy}E^i-\alpha_iD\tau_r(E^i)
        + \gamma_ix\tau_1(E^i) + \xi_i xD\tau_1(E^i)\right)
        = \mu_0yD-\mu_0y^{r+1} E.
  \]
This is an equality of two elements of $ST\oplus SDT$. Considering the
components in~$DT$, we find that
  \(
  xD\sum_{i=1}^l(-\rho_i\tau_r(E^i)+\xi_i\tau_1(E^i)) = \mu_0yD
  \), 
and this tells us that $\mu_0=0$ and that
  \[ \label{eq:q4}
  \sum_{i=1}^l\left(-\rho_i\tau_r(E^i)+\xi_i\tau_1(E^i)\right) = 0.
  \]
On the other hand, as the components in~$ST$ of the two sides
of~\eqref{eq:qt} are equal, we have
  \[
  -F\alpha_{0y} +
  \sum_{i=0}^l\gamma_ix\tau_1(E^i)
        = 0,
  \]
so that $\gamma_i=0$ for all $i\in\{2,\dots,l\}$ and
$F\alpha_{0y}+\gamma_1x=0$. As $x$ does not divide~$F$, we must have
$\alpha_{0y}=0$ and $\gamma_1=0$; in particular, there is a $\rho_0\in\kk$
such that $\alpha_0=\rho_0 x$.

Finally, considering the coefficient of~$\hat y\wedge\hat D\wedge\hat E$
of~$\delta^2(\eta)$ and of~\eqref{eq:q3} we see that
  \begin{multline*}
  \sum_{i=0}^l\Bigl(\nabla_x^{\alpha_iE^i}(F)
        + \nabla_y^{\beta_iE^i}(F)-F\beta_{iy}E^i - \beta_iD\tau_r(E^i) \\
        + \gamma_iy\tau_1(E^i) + \xi_iyD\tau_1(E^i) - \xi_iF(E+1)^i
        \Bigr)
        = \phi_0 + \psi_0D,
  \end{multline*}
which at this point we can rewrite (using in the process the
equality~\eqref{eq:q4} above and the fact that
$\nabla_x^{xE^i}(F)+\nabla_x^{yE^i}(F)=F\sum_{t=0}^r(E+t)^i$) as
  \[
  \rho_0xF_x+\beta_0F_y
        -F\left(\beta_{0y}+\xi_0-
        \sum_{i=1}^l\left(\rho_i\sum_{t=1}^r(E+t)^i-\xi_i(E+1)^i\right)
        \right)
        = \phi_0 + \psi_0D.
  \]
It follows at once that $\psi_0=0$ and that, in fact,
  \[
  \rho_0xF_x+\beta_0F_y
        -F\left(\beta_{0y}+\xi_0-
        \sum_{i=1}^l\left(\rho_i\sum_{t=1}^rt^i-\xi_i\right)
        \right)
        = \phi_0.
  \]
The polynomial $\phi_0$ is then in the linear span of $xF_x$,
$xF_y$, $yF_y$ and $F$ inside~$S_{r+1}$. Euler's relation implies that
already the first three polynomials span this subspace, and we have
  \[\label{eq:q4p}
  \begin{lgathered}
   \delta(x\otimes\hat x\wedge\hat E) = xF_x\otimes\hat y\wedge\hat D\wedge\hat E, \\
   \delta(x\otimes\hat y\wedge\hat E) = xF_y\otimes\hat y\wedge\hat D\wedge\hat E, \\
   \delta(y\otimes\hat y\wedge\hat E-D\otimes\hat D\hat E) 
                = yF_y\otimes\hat y\wedge\hat D\wedge\hat E.
  \end{lgathered}
  \]
We conclude in this way that the only $3$-coboundaries among the cocycles
of the form~\eqref{eq:q3} are the linear combinations of the right hand
sides of the equalities~\eqref{eq:q4p}; these three cocycles are, moreover,
linearly independent. This means that there is an isomorphism
  \[
  H^3(\X) \cong
        \kk\omega_3
        \oplus S_1D\otimes\hat y\wedge\hat D\wedge\hat E
        \oplus \frac{S_{r+1}}{\lin{xF_x,xF_y,yF_y}}\otimes\hat y\wedge\hat D\wedge\hat E,
  \]
with
  \[
  \omega_3=
    (yD-y^{r+1}E)\otimes\hat x\wedge\hat D\wedge\hat E
    +y\bar FE\otimes\hat y\wedge\hat D\wedge\hat E,
  \]
and that, in particular, $\dim H^3(\X)=r+2$, since the denominator
appearing in the right hand side of this isomorphism is a $3$-dimensional
vector space ---this follows at once from Lemma~\pref{lemma:indep}.

\paragraph We consider now a $2$-cocycle $\omega\in\X^2$ and $a\in S_2T$,
$b$,~$c\in S_1T$, $d$,~$e\in S_{r+1}T\oplus S_1DT$ and $f\in S_rT\oplus DT$
such that
  \[
  \omega = a\otimes\hat x\wedge\hat y+b\otimes\hat x\wedge\hat E
           + c\otimes\hat y\wedge\hat E
           + d\otimes\hat x\wedge\hat D
           + e\otimes\hat y\wedge\hat D
           + f\otimes\hat E\wedge\hat D.
  \]
Adding to~$\omega$ an element of $\delta^1(T\otimes\hat E)$,
we can assume that $f\in S_rT$; adding an element of $\delta^1(S_1T\otimes\hat
x\oplus S_1T\otimes\hat y)$, we can suppose that $a=0$; finally, adding
an element of $\delta^1((S_rT\oplus DT)\otimes\hat D)$ we can suppose that $d\in
y^{r+1}T\oplus yDT$.
In this situation, there are an integer $l\geq0$,
$\alpha_0$,~\dots,~$\alpha_l$, $\beta_0$,~\dots,~$\beta_l\in S_1$,
$\lambda_0$,~\dots,~$\lambda_l$, $\mu_0$,~\dots,~$\mu_l\in\kk$,
$\phi_0$,~\dots,~$\phi_l\in S_{r+1}$, $\psi_0$,~\dots,~$\psi_l\in S_1$ and
$\xi_0$,~\dots,~$\xi_l\in S_r$ such that $b=\sum_{i=0}^l\alpha_iE^i$,
$c=\sum_{i=0}^l\beta_iE^i$, $d=\sum_{i=0}^l(\lambda_iy^{r+1}+\mu_iyD)E^i$,
$e=\sum_{i=0}^l(\phi_i+\psi_iD)E^i$ and $f=\sum_{i=0}^l\xi_iE^i$. As
  \[
  \delta^1(-y\otimes\hat x+\bar FE\otimes\hat D)
        = y^{r+1}\otimes\hat x\wedge\hat D
          + \something{S_{r+1}}\otimes\hat y\wedge\hat D,
  \]
we can assume that $\lambda_0=0$.

The coefficient of $\hat x\wedge\hat y\wedge\hat E$ in~$\delta^2(\omega)$ is
$\sum_{i=0}^l(-\alpha_iy+\beta_ix)\tau_1(E^i)=0$, and this implies that
there are scalars $\rho_1$,~\dots,~$\rho_l\in\kk$ such that
$\alpha_i=\rho_ix$ and $\beta_i=\rho_iy$ for each $i\in\{1,\dots,l\}$. The
coefficient of $\hat x\wedge\hat D\wedge\hat E$ in $\delta^2(\omega)$ is
  \[ \label{eq:w1}
  \sum_{i=0}^l
        \bigl(-F\alpha_{iy}E^i-\alpha_iD\tau_r(E^i)+\xi_ix\tau_1(E^i)\bigr)
        = 0.
  \]
It follows that $\sum_{i=0}^l\alpha_iD\tau_r(E^i)=0$, so that
$\alpha_1=\cdots=\alpha_l=0$; as a consequence of this, we have that
$\rho_1=\cdots=\rho_l=0$ and $\beta_1=\cdots=\beta_l=0$. The
equality~\eqref{eq:w1} also tells us that
$-F\alpha_{0y}+\sum_{i=0}^l\xi_ix\tau_1(E^i)=0$, and from this we see that
$\xi_2=\cdots=\xi_l=0$ and $-F\alpha_{0y}-\xi_1x=0$, so that
$\alpha_{0y}=0$ and $\xi_1=0$, since $x$ does not divide~$F$. In
particular, there is a $\rho_0\in\kk$ such that $\alpha_0=\rho_0x$.

The coefficient of $\hat y\wedge\hat D\wedge\hat E$
in~$\delta^2(\omega)$ is
  \begin{align}
  \MoveEqLeft
  \sum_{i=0}^l
        \Bigl(
        \nabla_x^{\alpha_iE^i}(F) + \nabla_y^{\beta_iE^i}(F)
        - F \beta_{iy}E^i - \beta_iD\tau_r(E^i)
        +\xi_iy\tau_1(E^i)
        \Bigr) \\
    &= \rho_0xF_x+\beta_0F_y-\beta_{0y}F \\
    &= \bigl(\rho_0-(r+1)^{-1}\beta_{0y}\bigr)xF_x
       + \bigl(\beta_{0x}x+(1-(1+r)^{-1})\beta_{0y}y)F_y
     = 0,
  \end{align}
and our Lemma~\pref{lemma:indep} implies then that $\beta_0=0$ and $\rho_0=0$.
Finally, we consider the coefficient of $\hat x\wedge\hat y\wedge\hat D$:
  \[
  \sum_{i=0}^l
        \Bigl(
        -\lambda_iy^{r+2}\tau_1(E^i)
        +\mu_iyF(E+1)^i
        -\mu_iy^2D\tau_1(E^i)
        +\phi_ix\tau_1(E^i)+\psi_ixD\tau_1(E^i)
        \Bigr)
        = 0.
  \]
Looking at the terms involving~$D$ in this equation, we see that
  \[
  \sum_{i=0}^l(-\mu_iy^2+\psi_ix)D\tau_1(E^i))=0,
  \]
so $\mu_1=\cdots=\mu_l=0$ and $\psi_1=\cdots=\psi_l=0$. The terms not
involving~$D$ add up to
  \[
  \mu_0yF+\sum_{i=0}^l(-\lambda_iy^{r+2}+\phi_ix)\tau_1(E^i)=0,
  \]
so that $\lambda_2=\cdots=\lambda_l=0$, $\phi_2=\cdots=\phi_l=0$ and
$\mu_0yF+\lambda_1y^{r+1}-\phi_1x=0$, which implies that $\lambda_1=-\mu_0$
and $\phi_1=\mu_0y\bar F$.

After all this, we see that every $2$-cocycle in our complex is
cohomologous to one of the form
  \[ \label{eq:w2}
  (\mu_0yD-\mu_0y^{r+1}E)\hat x\wedge\hat D
  + (\phi_0+\psi_0D+\mu_0y\bar FE)\hat y\wedge\hat D
  + \xi_0\hat D\wedge\hat E
  \]
with $\mu_0\in\kk$, $\phi_0\in S_{r+1}$, $\psi_0\in S_1$ and
$\xi_0\in S_r$. Computing we find that all elements of this form are in
fact $2$-cocycles.

Let us now suppose that the cocycle~\eqref{eq:w2}, which we call again~$\omega$,
is a coboundary, so that
there exist $k\geq0$, $\alpha_0$,~\dots,~$\alpha_k$,
$\beta_0$,~\dots,~$\beta_k\in S_1$, $\sigma_1$,~\dots,~$\sigma_k\in S_r$,
$\zeta_0$,~\dots,~$\zeta_k\in\kk$ and $u\in T$ such that if
  \[
  \eta = \sum_{i=0}^k\alpha_iE^i\hat x
        +\sum_{i=0}^k\beta_iE^i\hat y
        +\sum_{i=0}^k(\sigma_i+\zeta_iD)E^i\hat D
        +u\hat E,
  \]
we have $\delta^1(\eta)=\omega$. The coefficient of~$\hat D\wedge\hat E$
in~$\delta^1(\eta)$ is $D\tau_r(u)$ so, comparing with~\eqref{eq:w2}, we
see that we must have $\xi_0=0$ and $u\in\kk$; it follows from this that
the coefficients of $\hat E\wedge\hat E$ and of $\hat y\wedge\hat E$
in~$\delta^1(\eta)$ vanish. On the other hand, the coefficient of~$\hat
x\wedge\hat y$ in~$\delta^1(\eta)$ is
$\sum_{i=0}^k(-\alpha_iy+\beta_ix)\tau_1(E^i)$: as this has to be zero, we
see that there exist $\rho_1$,~\dots,~$\rho_k\in\kk$ such that
$\alpha_i=\rho_ix$ and $\beta_i=\rho_iy$ for each $i\in\{1,\dots,k\}$.

The coefficient of~$\hat x\wedge\hat D$ in~$\delta^1(\eta)$ is
  \[ \label{eq:w3}
  \sum_{i=0}^k
        \bigl(
        -F\alpha_{iy}E^i-\alpha_iD\tau_r(E^i)+\sigma_ix\tau_1(E^i)
        +\zeta_ixD\tau_1(E^i)
        \bigr) = \mu_0yD-\mu_0y^{r+1}E.
  \]
This means, first, that
  \(
  \sum_{i=1}^k
        \bigl(
        -\rho_ixD\tau_r(E^i)+\zeta_ixD\tau_1(E^i)
        \bigr)
        =\mu_0yD
  \)
and this is only possible if $\mu_0=0$ and
  \[ \label{eq:w0}
  \sum_{i=1}^k\bigl(-\rho_i\tau_r(E^i)+\zeta_i\tau_1(E^i)\bigr)=0.
  \]
Second, the equality~\eqref{eq:w3} implies that
  \[
  \sum_{i=0}^k
        \bigl(
        -F\alpha_{iy}E^i+\sigma_ix\tau_1(E^i)
        \bigr)
        = -F\alpha_{0y}+\sum_{i=1}^k\sigma_ix\tau_1(E^i)
        = 0,
  \]
so that $\sigma_2=\cdots=\sigma_k=0$ and
$F\alpha_{0y}+\sigma_1x=0$, which tells us that
$\sigma_1=0$ and $\alpha_{0y}=0$; there is then a $\rho_0\in\kk$ such that
$\alpha_0=\rho_0x$.

Finally, the coefficient of $\hat y\wedge\hat D$ in~$\delta^1(\eta)$ is
  \begin{multline}
  \sum_{i=0}^k
        \bigl(
        \nabla_x^{\alpha_iE^i}(F)+\nabla_y^{\beta_iE^i}(F)
        - F\beta_{iy}E^i
        - \beta_iD\tau_r(E^i)
        +\sigma_iy\tau_1(E^i)
        \\
        -\zeta_iF(E+1)^i+\zeta_iyD\tau_1(E^i)
        \bigr)
        = \phi_0+\psi_0D.
  \end{multline}
Looking only at the terms which are in $S_1DT$, we see that
  \[
  yD\sum_{i=1}^k(-\rho_i\tau_r(E^i)+\zeta_i\tau(E^i))=\psi_0D
  \]
and, in view of~\eqref{eq:w0}, it follows from this that $\psi_0=0$. The
terms in~$S_{r+1}T$, on the other hand, are
  \[
  \rho_0xF_x+\beta_0F_y
    +F
    \left(
    -\beta_{0y}-\zeta_0
    +\sum_{i=0}^k
        \Bigl(
        \rho_i\sum_{t=1}^r(E+t)^i-\zeta_i(E+1)^i
        \Bigr)
    \right) = \phi_0,
  \]
and proceeding as before we see that $\phi_0$ is in the linear span of
$xF_x$, $xF_y$ and $yF_y$. Computing, we find that
  \begin{gather}
  \delta^1(x\otimes\hat x)=xF_x\otimes\hat y\wedge\hat D, \\
  \delta^1(x\otimes\hat y)=xF_y\otimes\hat y\wedge\hat D, \\
  \delta^1(y\otimes\hat y-D\hat D)=yF_y\otimes\hat y\wedge\hat D.
  \end{gather}
We thus conclude that there is an isomorphism
  \[
  H^2(\X)
        \cong
        \kk\omega_2
        \oplus \frac{S_{r+1}}{\lin{xF_x,xF_y,yF_y}}\otimes\hat y\wedge\hat D
        \oplus S_1D\otimes\hat y\wedge\hat D
        \oplus S_r\otimes\hat D\wedge\hat E,
  \]
with $\omega_2=(yD-y^{r+1}E)\otimes\hat x\wedge\hat D+y\bar FE\otimes\hat
y\wedge\hat D$, and that, in particular, the dimension of~$H^2(\X)$ is~$2r+3$.

\paragraph We can summarize our findings as follows:

\begin{Proposition*}\plabel{prop:hh}\pushQED{\qed}
Suppose that $r\geq3$.
For all $p\geq4$ we have $\HH^p(A)=0$. There are
isomorphisms 
  \begin{gather}
  \HH^0(A) \cong \kk, \\
  \HH^1(A) \cong S_r\otimes\hat D\oplus\kk\otimes\hat E, \\
  \HH^2(A) \cong
        \kk\omega_2
        \oplus \frac{S_{r+1}}{\lin{xF_x,xF_y,yF_y}}\otimes\hat y\wedge\hat D
        \oplus S_1D\otimes\hat y\wedge\hat D
        \oplus S_r\otimes\hat D\wedge\hat E, \\
  \HH^3(A) \cong
        \kk\omega_3
        \oplus \frac{S_{r+1}}{\lin{xF_x,xF_y,yF_y}}\otimes\hat y\wedge\hat D\wedge\hat E
        \oplus S_1D\otimes\hat y\wedge\hat D\wedge\hat E, 
  \end{gather}
with 
  \begin{gather}
  \omega_2=(yD-y^{r+1}E)\otimes\hat x\wedge\hat D+y\bar FE\otimes\hat y\wedge\hat D, \\
  \omega_3=
    (yD-y^{r+1}E)\otimes\hat x\wedge\hat D\wedge\hat E
    + y\bar FE\otimes\hat y\wedge\hat D\wedge\hat E.
  \end{gather}
The Hilbert series of the Hochschild cohomology of~$A$ is
  \begin{align}
  h_{\HH^\bullet(A)}(t) 
       &= 1 + (r+2)t + (2r+3)t^2 + (r+2)t^3 \\
       &= (1+t)(1+(r+1)t+(r+2)t^2). \qedhere
  \end{align}
\end{Proposition*}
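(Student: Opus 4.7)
The proposition is a summary of the case-by-case cocycle analyses carried out in the preceding paragraphs, so the plan is to organize those results rather than to introduce any new argument.

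First I would invoke the homotopy constructed in~\pref{p:hhdifferentials}: it shows that the multiplication-by-internal-degree map~$\gamma$ acts as zero on the cohomology of $A\otimes\Lambda V^*$, so the inclusion $(A\otimes\Lambda V^*)_0 \inc A\otimes\Lambda V^*$ is a quasi-isomorphism. Writing $\X$ for the degree-zero subcomplex, this gives $\HH^p(A) \cong H^p(\X)$ for all $p$. Since $\Lambda^p V = 0$ for $p\geq 5$, the vanishing $\HH^p(A) = 0$ for $p\geq 5$ is immediate, while $H^4(\X) = 0$ follows from the explicit demonstration above that $\delta^3$ is surjective.

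Next I would collect the degree-by-degree computations already performed. For degree~$0$, one has $H^0(\X) = \ker\delta^0 = \kk$ directly from the injectivity of $\tau_1$ and $\tau_r$ on $ET$. For degree~$1$, the analysis yielded the complete irredundant set of representatives $\phi\otimes\hat D + f\otimes\hat E$ with $\phi\in S_r$ and $f\in\kk$, giving the isomorphism $\HH^1(A) \cong S_r\otimes\hat D \oplus \kk\otimes\hat E$ and the dimension $r+2$. For degrees~$2$ and~$3$ the method is identical in both cases: first use the image of $\delta^1$ (respectively $\delta^2$) to kill redundant components of an arbitrary cocycle, then solve the cocycle equations to obtain a normal form; this yields the stated direct-sum decompositions, with the classes $\omega_2$ and $\omega_3$ appearing as precisely those obstructions not killed by coboundaries. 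The one nontrivial dimension count in these decompositions is that of the quotient $S_{r+1}/\lin{xF_x, xF_y, yF_y}$, which by Lemma~\pref{lemma:indep} is $(r+2) - 3 = r-1$.

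Assembling dimensions gives the Hilbert series $1 + (r+2)t + (2r+3)t^2 + (r+2)t^3$, and the factorization $(1+t)\bigl(1+(r+1)t+(r+2)t^2\bigr)$ is a direct multiplication. Since the genuine calculation has already been done, there is no remaining obstacle beyond bookkeeping: matching the cocycle representatives $\omega_2$ and $\omega_3$ displayed in the statement against the ones identified along the way, and verifying that the summands in the displayed isomorphisms account exactly for the classes produced by the normal-form reduction.
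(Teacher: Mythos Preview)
Your proposal is correct and matches the paper's approach exactly: the proposition is stated with a \qedhere precisely because it is a summary of the degree-by-degree computations already carried out in the preceding paragraphs, and your organization of those results (the homotopy reduction to the subcomplex~$\X$, the surjectivity of~$\delta^3$ for~$H^4$, the normal-form analyses for $H^1$, $H^2$, $H^3$, and the dimension count via Lemma~\pref{lemma:indep}) is the same bookkeeping the paper performs.
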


In fact, in each of the isomorphisms appearing in the statement of the
proposition we have given a set of representing cocycles. This will be
important in what follows, when we compute the Gerstenhaber algebra
structure on the cohomology of~$A$.

We have chosen a system of coordinates in the vector space
containing the arrangement~$A$ in such a way that one of the lines is given
by the equation $x=0$. This was useful in picking a basis for the $S$-module
of derivations~$\Der(\A)$ and, as a consequence, obtaining a presentation
of the algebra~$A$ amenable to the computations we wanted to carry out, but
the unnaturality of our choice is reflected in the rather unpleasant form
of the representatives that we have found for cohomology classes ---a
consequence of the combination of the truth of Hermann Weyl's dictum that
the introduction of coordinates is an act of violence together with that of
the everyday observation that violence does not lead to anything good. In
the next section we will be able to obtain a more natural description.

\paragraph In Proposition~\pref{prop:hh} we considered only line
arrangements with $r\geq3$, that is, with at least $5$ lines. As we
explained in~\pref{p:r3}, without the restriction the method of
calculation that we followed has to be modified, and it turns out that this
is not only a technical difference: the actual results are different. Let
us describe what happens, starting with the factorizable cases:
\begin{itemize}

\item If there are no lines, so that $r=-2$, the arrangement is empty and
$\D(\A)$ is the second Weyl algebra $\kk[x,y,\partial_x,\partial_y]$.

\item If there is one line, then $\D(\A)$ is
$\kk[x,y,x\partial_x,\partial_y]$ and this is isomorphic to $\U(\mathfrak
s)\otimes A_1$, with $\U(\mathfrak s)$ the enveloping algebra of the
non-abelian $2$-dimensional Lie algebra~$\mathfrak s$ and $A_1$ the first
Weyl algebra.

\item If there are two lines, so that $r=0$, then $\D(\A)$ is
$\kk[x,y,x\partial_x,y\partial_y]$, which is isomorphic to $\U(\mathfrak
s)\otimes\U(\mathfrak s)$. 

\end{itemize}
The Hochschild cohomology of the Weyl algebras is well-known ---for
example, from~\cite{Sridharan}--- as is that of~$\U(s)$. Using this and Künneth's
formula we find that when $-2\leq r\leq0$ we have for all $i\in\NN_0$.
  \[
  \dim\HH^i(\D(\A)) = \binom{r+2}{i}.
  \]
Finally, we have the cases of three and four lines. Up to isomorphism of
arrangements, one can assume that the defining polynomials are $Q=xy(x-y)$
and $Q=xy(x-y)(x-\lambda y)$ for some $\lambda\in\kk\setminus\{0,1\}$,
respectively. One can compute the cohomology of $\D(\A)$ in these cases
along the lines of what we did above, but the computation is surprisingly
much more involved. We have done the computation using an alternative, much
more efficient approach ---using a spectral sequence that computes in
general the Hochschild cohomology of the enveloping algebra of a
Lie--Rinehart pair--- on which we will report in an upcoming paper. Let us
for now simply summarize the result: when $r$ is $2$ or~$3$, the Hilbert
series of $\HH^\bullet(A)$ is
  \[
  h_{\HH^\bullet(A)}(t) 
        = 1 + (r+2)t + (2r+4)t^2 + (r+3)t^3.
  \]
This differs from the general case of Proposition~\pref{prop:hh} in the
coefficients of~$t^2$ and~$t^3$.

For our immediate purposes, we remark that in all cases $\HH^1(\D(\A))$ has
dimension equal to the number of lines in the arrangement~$\A$, and that
its concrete description is the same in all cases.

\section{The Gerstenhaber algebra structure on 
\texorpdfstring{$\HH^\bullet(\D(\A))$}{HH(D(A))}}
\label{sect:cup}

\paragraph\label{p:comparison} 
Let $\B A$ be the usual bar resolution for $A$ as an $A$-bimodule.
There is a morphism of complexes $\phi:\P\to\B A$ over
the identity map of~$A$ such that $\phi=\phi_K+\phi_N$ with
$\phi_K$,~$\phi_N:\P\to\B A$ maps of $A$-bimodules such that
  \[
  \phi_K(1|v_1\wedge\cdots\wedge v_p|1)
        = \sum_{\pi\in S_p}(-1)^{\epsilon(\pi)}
                1|v_{\pi(1)}|\cdots|v_{\pi(p)}|1,
  \]
whenever $p\geq0$ and $v_1$,~\dots,~$v_p\in V$,
with the sum running over permutations of degree~$p$,
and
  \begin{gather}
  \phi_N(1|1)
        = 0; \\
  \phi_N(1|v|1)
        = 0, \quad\forall v\in V; \\
  \!\begin{multlined}[.9\displaywidth]
  \phi_N(1|x\wedge y|1)
        = \phi_N(1|x\wedge E|1)
        = \phi_N(1|y\wedge E|1)
        = \phi_N(1|x\wedge D|1) \\
        = \phi_N(1|D\wedge E|1)
        = 0;
  \end{multlined}\\
  \phi_N(1|y\wedge D|1)
        = q\pt1|\bar q\pt2|q\pt3|1 - F|1|1|1; \\
  \phi_N(1|x\wedge y\wedge E|1)
        = \phi_N(1|x\wedge D\wedge E|1)
        = 0; \\
  \!\begin{multlined}[.9\displaywidth]
  \phi_N(1|x\wedge y\wedge D|1)
        = q\pt1|\bar q\pt2|q\pt3|x|1
                - q\pt1|\bar q\pt2|x|q\pt3|1
                + q\pt1|x|\bar q\pt2|q\pt3|1 \\
                - F|x|1|1|1 - F|1|1|x|1;
  \end{multlined} \\
  \!\begin{multlined}[.9\displaywidth]
  \phi_N(1|y\wedge D\wedge E|1)
        = q\pt1|\bar q\pt2|q\pt3|E|1
                - q\pt1|\bar q\pt2|E|q\pt3|1
                + q\pt1|E|\bar q\pt2|q\pt3|1 \\
                - F|E|1|1|1 - F|1|1|E|1.
  \end{multlined}
  \end{gather}
Here $q\pt{1}|\bar q\pt{2}|q\pt{3}$ denotes the element $\nabla(F)\in
S\otimes S_1\otimes S$, with an omitted sum.

On the other hand, there is a morphism of complexes of $A$-bimodules
$\psi:\B A\to\P$ over the identity map of~$A$ such that
  \begin{gather}
  \psi_0(1|1) = 1|1, \\
  \psi_1(1|w|1) = w\pt1|w\pt2|w\pt 3,
        \qquad\text{for all standard monomials $w$}; \\
  \psi_2(1|yD|y|1) = -y|y\wedge D|1 - q\pt{1}|q\pt{2}\wedge y|q\pt{3}; \\
  \psi_2(1|y^{r+1}E|y|1) = -y^{r+1}|y\wedge E|1; \\
  \psi_2(1|E|w|1) = -w\pt1|w\pt2\wedge E|w\pt3
        \qquad\text{for all standard monomials $w$}; \\
  \psi_2 (1|v|w|1) = - 1|w\wedge v |1, \label{eq:psi2vw}
	\qquad\text{if $v,w\in\{x,y,D,E\}$ and $vw$ is not standard}; \\
  \psi_2(1|w|x|1) = -w\pt1|x\wedge w\pt2|w\pt3
        \qquad\text{for all standard monomials $w$}; \\
\shortintertext{and}
  \psi_2(1|u|v|1) = 0 \label{eq:psi2uv}
  \end{gather}
whenever $u$ and $v$ are standard monomials of~$A$ such that the
concatenation $uv$ is also a standard monomial. This morphism~$\psi$ can be
taken ---and we will take it--- to be normalized, so that it vanishes on
elementary tensors of~$\B A$ with a scalar factor.

\paragraph We need the comparison morphisms that we have just described in
order to compute the Gerstenhaber bracket on~$\HH^\bullet(A)$, but we start
with a more immediate application: obtaining a natural basis of the first
cohomology space~$\HH^1(A)$.

\begin{Proposition*}\plabel{prop:partial}
\begin{thmlist}\fixhspace

\item If $\alpha$ is a non-zero element of~$S_1$ that divides $Q$, then
there exists a unique derivation $\partial_\alpha:A\to A$ such that
$\partial_\alpha(f)=0$ for all $f\in S$ and
  \[
  \partial_\alpha(\delta) = \frac{\delta(\alpha)}{\alpha}
  \]
for all $\delta\in\Der(\A)$. 

\item If $Q=\alpha_0\dots\alpha_{r+1}$ is a factorization of~$Q$ as a
product of elements of~$S_1$, then the cohomology classes of the $r+2$
derivations $\partial_{\alpha_0}$,~\dots,~$\partial_{\alpha_{r+1}}$ of~$A$
freely span the vector space~$\HH^1(A)$.

\end{thmlist}
\end{Proposition*}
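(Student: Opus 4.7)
The plan is to build $\partial_\alpha$ by specifying it on the four generators $x$,~$y$,~$D$,~$E$ of~$A$ and checking compatibility with the commutation relations~\eqref{eq:relations}, and then to identify the cohomology class of each $\partial_{\alpha_i}$ inside the explicit description of $\HH^1(A)$ from Proposition~\pref{prop:hh}.

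For part~(i), uniqueness is immediate because $A$ is generated by $x$,~$y$,~$D$,~$E$. For existence, I first need to know that $\delta(\alpha)/\alpha$ lies in~$S$ for every $\delta\in\Der(\A)$. Writing $Q=\alpha\beta$ with $\gcd(\alpha,\beta)=1$, the condition $\delta(Q)\in QS$ applied modulo~$\alpha$ to $\delta(Q)=\delta(\alpha)\beta+\alpha\delta(\beta)$ shows that $\alpha$ divides $\delta(\alpha)$. I then set $\partial_\alpha(x)=\partial_\alpha(y)=0$, $\partial_\alpha(D)=D(\alpha)/\alpha\in S_r$ and $\partial_\alpha(E)=E(\alpha)/\alpha=1$, and check each of the relations~\eqref{eq:relations}. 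The delicate ones are $[D,y]=F$ and $[E,D]=rD$. The first reduces to $0=0$, because $\partial_\alpha(F)=0$ and $D(\alpha)/\alpha\in S$ commutes with~$y$. The second uses that $\partial_\alpha(E)=1$ commutes with~$D$ and that $D(\alpha)/\alpha$ is homogeneous of degree~$r$, so $[E,D(\alpha)/\alpha]=r\,D(\alpha)/\alpha$ as required. By the universal property of the iterated Ore extension of Lemma~\pref{lemma:ore}, this data extends uniquely to a derivation of~$A$.

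For part~(ii), Proposition~\pref{prop:hh} gives $\dim\HH^1(A)=r+2$, so it is enough to show that the $r+2$ classes $[\partial_{\alpha_i}]$ are linearly independent. Using the comparison map $\psi:\B A\to\P$ of~\pref{p:comparison}, which sends $1|v|1$ to $1|v|1$ for $v\in V$, the cocycle in the Koszul-type complex representing~$\partial_{\alpha_i}$ is simply
\[
\frac{D(\alpha_i)}{\alpha_i}\otimes\hat D+1\otimes\hat E.
\]
After the change of coordinates of~\pref{p:start} I may take $\alpha_0=x$, for which $D(x)/x=0$; for $i\geq 1$, writing $\alpha_i=a_ix+b_iy$, we have $b_i\neq 0$ (because $x\nmid F$) and $D(\alpha_i)/\alpha_i=b_iF/\alpha_i\in S_r$. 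The calculation in Proposition~\pref{prop:hh} exhibits cocycles of the form $\phi\otimes\hat D+f\otimes\hat E$ with $\phi\in S_r$, $f\in\kk$ as an irredundant system of representatives for $\HH^1(A)$. Therefore $\sum_ic_i[\partial_{\alpha_i}]=0$ forces $\sum_ic_i=0$ and $\sum_{i=1}^{r+1}c_ib_iF/\alpha_i=0$; by the second lemma of Section~\pref{sect:algebra}, asserting that $\{F/\alpha_1,\dots,F/\alpha_{r+1}\}$ is a basis of~$S_r$, together with $b_i\neq 0$, this yields $c_i=0$ for all~$i$.

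The only point demanding any real care is the verification of $[E,D]=rD$ in part~(i), which depends in an essential way on the homogeneity of~$\alpha$ and of $D(\alpha)/\alpha$. Everything else is a matter of collecting the ingredients already built: the Ore-extension presentation for existence, the explicit basis from Proposition~\pref{prop:hh} for counting, and the combinatorial basis lemma for~$S_r$ for the independence argument.
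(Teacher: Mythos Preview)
Your argument is correct and follows essentially the same route as the paper: check the derivation on the presentation~\eqref{eq:relations}, then transport each $\partial_{\alpha_i}$ to the complex $A\otimes\Lambda V^*$ and invoke the basis $\{F/\alpha_1,\dots,F/\alpha_{r+1}\}$ of~$S_r$ together with the explicit description of $\HH^1(A)$ from Proposition~\pref{prop:hh}. One small slip: to pass from the bar cocycle $\tilde\delta$ to a cocycle on~$\P$ you must precompose with the comparison map $\phi:\P\to\B A$, not with $\psi:\B A\to\P$ (the functor $\hom_{A^e}(\place,A)$ is contravariant); since $\phi_1(1|v|1)=1|v|1$ for $v\in V$, this yields exactly the cocycle you wrote down, so the computation is unaffected.
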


Here we are viewing $\HH^1(A)$ as the vector space of outer
derivations of~$A$, as usual. It should be noticed that the
derivation~$\partial_\alpha$ associated to a linear factor of~$Q$ does not
change if we replace~$\alpha$ by one of its non-zero scalar multiples: this
means that the basis of~$\HH^1(A)$ is really indexed by the lines of the
arrangement~$\A$.

\begin{proof}
\thmitem{1} Let us fix a non-zero element~$\alpha$ in~$S_1$ dividing~$Q$.
There is at most one derivation~$\partial_\alpha:A\to A$ as in the
statement of the proposition simply because the algebra~$A$ is generated by
the set $S\cup\Der(\A)$. In order to prove that there is such a derivation,
we need only recall from \cite{OT}*{Proposition 4.8} that
$\delta(\alpha)\in\alpha S$ for all $\delta\in\Der(\A)$ and check that the
candidate derivation respects the relations~\eqref{eq:relations}
of~\pref{p:start} that present the algebra~$A$.

\thmitem{2} We need to pass from the description of~$\HH^1(A)$ as the
space of outer derivations to its description in terms of the complex~$\X$
that was used to compute it: we do this with the comparison morphism 
$\phi:\P\to\B A$ over the identity map that we described
in~\pref{p:comparison}.
If $\delta:A\to A$ is a
derivation of~$A$ and $\tilde\delta:A\otimes A\otimes A\to A$ is the map
such that $\tilde\delta(a\otimes b\otimes c)=a\delta(b)c$ for all
$a$,~$b$,~$c\in A$, which is a $1$-cocycle on~$\B A$, the composition
$\bar\delta\circ\phi_1:A\otimes V\otimes A\to A$ is a $1$-cocycle in the
complex~$\hom_{A^e}(\P,A)$ whose cohomology class corresponds to~$\delta$
in the usual description of~$\HH^1(A)$ as the space of outer derivations
of~$A$. In the notation that we used in~\pref{p:hhdifferentials}, this
cohomology class is that of 
  \[
  \delta(x)\otimes\hat x + \delta(y)\otimes\hat y
    + \delta(D)\otimes\hat D + \delta(E)\otimes\hat E
    \in A\otimes\hat V.
  \]
Using this, we can now prove the second part of the proposition.
We can suppose without loss of generality that $\alpha_0=x$,
and then the class of $\delta_{\alpha_0}$ in~$\HH^1(A)$ is that of
  \[
  1\otimes \hat E.
  \]
On the other hand, for each
$i\in\{1,\dots,r+1\}$, computing we find that the class
of~$\partial_{\alpha_i}$ is
  \[
  \alpha_{iy}\frac{F}{\alpha_i}\otimes\hat D + 1\otimes\hat E.
  \]
It follows easily from the second part of Lemma~\pref{lemma:indep} that
these $r+2$ classes span~$\HH^1(A)$ and, since the dimension of
this space is exactly~$r+2$, do so freely.
\end{proof}

\goodbreak

\subsection*{The cup product}

\paragraph\label{prop:cup} We describe the associative algebra
structure on~$\HH^\bullet(A)$ given by the cup product.

\begin{Proposition*}
The cup product on $\HH^\bullet(A)$ is such that
  \begin{alignat}{3}
  & S_r\otimes\hat D\smile S_r\otimes\hat D = 0; \\
  & \phi\hat D\smile\hat E = \phi\hat D\wedge\hat E,
        &\qquad& \forall\phi\in S_r; \\
  & S_r\otimes\hat D\smile \HH^2(A) = 0; \\
  & 1\otimes\hat E\smile\omega_2 = \omega_3; \\
  & 1\otimes\hat E\smile\kappa\otimes\hat y\wedge\hat D 
        = \kappa\otimes\hat y\wedge\hat E\wedge\hat D,
        && \forall\kappa\in S_{r+1}/\lin{xF_x,xF_y,yF_y}; \\
  & 1\otimes\hat E\smile\psi D\otimes\hat y\wedge\hat D
        = \psi D\otimes\hat y\wedge\hat D\wedge\hat E,
        &&\forall\psi\in S_1; \\
  & 1\otimes\hat E\smile S_r\otimes\hat D\wedge\hat E = 0.
  \end{alignat}
\end{Proposition*}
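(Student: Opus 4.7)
The plan is to compute each claimed cup product by transporting the multiplicative structure from the bar resolution to our resolution~$\P$ via the comparison morphisms $\phi:\P\to\B A$ and $\psi:\B A\to\P$ constructed in~\ref{p:comparison}. Given cocycles $f:\P^p\to A$ and $g:\P^q\to A$ representing classes in $\HH^p(A)$ and $\HH^q(A)$, a representative of $f\smile g$ in $\hom_{A^e}(\P^{p+q},A)$ is obtained by the usual recipe: lift via $\phi_{p+q}$ to a chain in $A^{\otimes(p+q+2)}$, split each elementary tensor $a_0\otimes\cdots\otimes a_{p+q+1}$ at position $p+1$ into two bar tensors of lengths $p$ and $q$, apply $\psi_p$ and $\psi_q$ respectively to obtain elements of $\P^p$ and $\P^q$, evaluate $f$ and $g$, and multiply the results in~$A$. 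Under this procedure the chosen representatives from Proposition~\pref{prop:hh} can be combined cocycle-by-cocycle.

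For the identities whose left factor is $1\otimes\hat E$ or $\phi\otimes\hat D$ with $\phi\in S_r$, the Koszul component $\phi_K$ produces alternating sums of elementary tensors in the generators, while the normal component $\phi_N$ vanishes on most of the relevant inputs. The product $\phi\hat D\smile\hat E=\phi\hat D\wedge\hat E$ and the analogous formulas $1\otimes\hat E\smile(\kappa\otimes\hat y\wedge\hat D)=\kappa\otimes\hat y\wedge\hat D\wedge\hat E$ and $1\otimes\hat E\smile(\psi D\otimes\hat y\wedge\hat D)=\psi D\otimes\hat y\wedge\hat D\wedge\hat E$ then follow directly, since wedging with $\hat E$ corresponds on the bar side to appending that basis element. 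The vanishings $S_r\otimes\hat D\smile S_r\otimes\hat D=0$, $S_r\otimes\hat D\smile \HH^2(A)=0$ and $1\otimes\hat E\smile(S_r\otimes\hat D\wedge\hat E)=0$ should follow because, after computing the cup product on cochains and examining its components in the decomposition of $\HH^\bullet(A)$ from Proposition~\pref{prop:hh}, the resulting cochain lies entirely in summands of $A\otimes\Lambda^\bullet V^*$ that are either manifestly coboundaries or that were shown during the cohomology computation of Section~\pref{sect:hh} to contribute nothing to cohomology.

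The main obstacle will be the identity $1\otimes\hat E\smile\omega_2=\omega_3$, since $\omega_2$ has summands involving $\hat y\wedge\hat D$ and the comparison morphism $\phi$ has a non-trivial normal component $\phi_N$ precisely on those basis elements of $\P$ that involve $\hat y\wedge\hat D$ (and on the higher-degree basis elements containing it). Those correction terms are built from $\nabla(F)$ and will contribute additional summands to $(1\otimes\hat E)\smile\omega_2$ which must be recognized, after reduction modulo the $3$-coboundaries that were made explicit in paragraph~\pref{p:3-cocycles}, as the representative of $\omega_3$ exhibited in Proposition~\pref{prop:hh}. The bookkeeping of these extra $\nabla(F)$-contributions, combined with careful identification of the resulting $3$-cochain inside the decomposition of $\HH^3(A)$, is where the calculation becomes most delicate; once this is handled, the remaining identities follow by the same procedure applied to inputs for which $\phi_N$ contributes nothing or only coboundary terms.
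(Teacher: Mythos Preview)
Your approach is valid in principle but differs from the paper's. You propose to compute cup products by transporting along the comparison maps $\phi:\P\to\B A$ and $\psi:\B A\to\P$, using the standard diagonal on the bar resolution in the middle. The paper instead constructs an explicit diagonal morphism $\Delta:\P\to\P\otimes_A\P$ directly, written as a Koszul part $\Delta_K$ plus a small correction $\Delta_N$ supported only on basis elements containing $y\wedge D$, and then reads off each cup product as the composition $P_{p+q}\xrightarrow{\Delta_{p,q}}P_p\otimes_A P_q\xrightarrow{f\otimes g}A$. This is considerably more economical: the correction $\Delta_N$ involves only the iterated $\nabla(F)$, and once $\Delta$ is in hand every product in the statement drops out of a short computation without ever passing through bar tensors. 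Your route would require, at each step, expanding $\phi_{p+q}$ (which for $p+q=3$ already has a nontrivial $\phi_N$ on $x\wedge y\wedge D$ and $y\wedge D\wedge E$), applying the bar diagonal, and then collapsing back through $\psi_p$ and $\psi_q$, whose values you only know on a small list of inputs; in practice this reproduces the diagonal $(\psi\otimes\psi)\circ\Delta_{\B}\circ\phi$ on $\P$, so you are computing $\Delta$ anyway, just less directly. Your identification of the delicate point (the $\omega_2$ case and the $\nabla(F)$ bookkeeping) is accurate, but note that with the paper's explicit $\Delta$ even $1\otimes\hat E\smile\omega_2=\omega_3$ is a one-line check, since the only nonzero contribution of $\Delta_N$ lands in bidegrees that $\hat E$ or $\omega_2$ kill.
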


These equalities completely describe the multiplicative
structure on~$\HH^\bullet(A)$.

\begin{proof}
There is a morphism of complexes
of~$A$-bimodules~$\Delta:\P\to\P\otimes_A\P$ that lifts the canonical
isomorphism $A\to A\otimes_AA$ such that $\Delta=\Delta_K+\Delta_N$, with
\begin{itemize}

\item $\Delta_K:\P\to\P\otimes_A\P$ the map of $A$-bimodules such that
for whenever $p\geq0$ and $v_1$,~\dots,~$v_p\in V$ we have
  \[
  \Delta_K(1|v_1\wedge \cdots\wedge  v_p|1)
    = \sum(-1)^{\epsilon}
      1|v_{i_1}\wedge \cdots\wedge  v_{i_r}|1
        \otimes1|v_{j_1}\wedge \cdots\wedge  v_{j_s}|1,
  \]
with the sum taken over all decompositions $r+s=p$ with $r$,~$s\geq0$, and
all permutations $(i_1,\dots,i_r,j_1,\dots,j_s)$ of $(1,\dots,p)$ such that
$i_1<\cdots<i_r$ and $j_1<\cdots<j_s$, and where $\epsilon$ is the
signature of the permutations,

\item and $\Delta_N:\P\to\P\otimes_A\P$ the map of $A$-bimodules such that
  \begin{gather}
  \Delta_N(1|1) = 0; \\
  \Delta_N(1|v|1) = 0, \qquad \forall v\in V; \\
  \Delta_N(1|v\wedge w|1) = 0, \qquad\text{if $v$,~$w\in\{x,y,D,E\}$, $v\neq
  w$ and $\{v,w\}\neq\{y,D\}$}; \\
  \Delta_N(1|y\wedge D|1)
        = f\pt{1}|f\pt{2}|f\pt{3}\otimes1|f\pt{4}|f\pt{5}; \\
  \Delta_N(1|x\wedge y\wedge D|1) = \Delta_N(1|x\wedge y\wedge E|1)
        = \Delta_N(1|x\wedge D\wedge E|1) = 0; \\
  \!\begin{multlined}[.9\displaywidth]
  \Delta_N(1|y\wedge D\wedge E|1) =
          - f\pt{1}|f\pt{2}\wedge E|f\pt{3}\otimes1|f\pt{4}|f\pt{5} \\
          + f\pt{1}|f\pt{2}|f\pt{3}\otimes1|f\pt{4}\wedge E|f\pt{5}.
  \end{multlined}
  \end{gather}
Here we have written $f\pt{1}|f\pt{2}|f\pt{3}|f\pt{4}|f\pt{5}$
the image of~$F$ under the composition
  \[
  \begin{tikzcd}
  S \arrow[r, "\nabla"]
    & S \otimes S_1\otimes S \arrow[r, "\id_S\otimes\id_{S_1}\otimes\nabla"]
    &[4em] S \otimes S_1\otimes S\otimes S_1\otimes S,
  \end{tikzcd}
  \]
with an omitted sum, \emph{à la} Sweedler.

\end{itemize}
We leave the verification that this does define a morphism of complexes to
the reader.

One can compute the cup product on $\HH^\bullet(A)$ using this diagonal
morphism~$\Delta$. Indeed, we view $\HH^\bullet(A)$ as the cohomology of
the complex~$\hom_{A^e}(\P,A)$, and if $\phi$ and~$\psi$ are a $p$- and a
$q$-cocycle in that complex, the cup product of their cohomology classes is
represented by the composition
  \[
  \begin{tikzcd}
  P_{p+q} \arrow[r, "\Delta_{p,q}"]
    & P_p\otimes_A P_q \arrow[r, "\phi\otimes\psi"]
    & A\otimes_AA=A,
  \end{tikzcd}
  \]
with $\Delta_{p,q}$ the component $P_{p+q}\to P_p\otimes P_q$ of the
morphism~$\Delta$. The multiplication table given in the statement of the
composition can be computed in this way, item by item.
\end{proof}

\begin{Proposition}\label{prop:partial:sub}
\begin{thmlist}\fixhspace

\item For all $i$,~$j$,~$k\in\{0,\dots,r+1\}$ we have
  \[ \label{eq:partial:rels}
  \partial_{\alpha_i}\smile\partial_{\alpha_j}
  + \partial_{\alpha_j}\smile\partial_{\alpha_k}
  + \partial_{\alpha_k}\smile\partial_{\alpha_i} 
  = 0
  \]
and $\HH^1(A)\smile\HH^1(A)=S_r\otimes\hat D\wedge\hat E$. 

\item The subalgebra~$\H$ of~$\HH^\bullet(A)$ generated by~$\HH^1(A)$ is the
graded-commutative algebra freely generated by its elements
$\partial_{\alpha_0}$,~\dots,~$\partial_{\alpha_{r+1}}$ of degree~$1$
subject to the $\binom{r+2}{3}$ relations~\eqref{eq:partial:rels}.

\end{thmlist}
\end{Proposition}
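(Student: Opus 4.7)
The plan is to compute the products $\partial_{\alpha_i}\smile\partial_{\alpha_j}$ directly using the representatives produced in the proof of Proposition~\pref{prop:partial} together with the multiplication table of Proposition~\pref{prop:cup}, and then match Hilbert series with the abstract presentation.

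\textbf{Pairwise products.} The proof of Proposition~\pref{prop:partial} exhibits the cocycle
\[
\partial_{\alpha_i} = \alpha_{iy}\tfrac{F}{\alpha_i}\otimes\hat D + 1\otimes\hat E
\qquad (i\in\{1,\dots,r+1\}),
\]
and $\partial_{\alpha_0}=1\otimes\hat E$ if we assume $\alpha_0=x$; with the convention $\alpha_{0y}(F/\alpha_0)=0$ this formula becomes uniform. From Proposition~\pref{prop:cup} we have $S_r\hat D\smile S_r\hat D=0$ and $\phi\hat D\smile\hat E=\phi\hat D\wedge\hat E$, while graded commutativity of $\HH^\bullet(A)$ gives $\hat E\smile\phi\hat D=-\phi\hat D\wedge\hat E$ and $\hat E\smile\hat E=0$. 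Expanding bilinearly, I would obtain
\[
\partial_{\alpha_i}\smile\partial_{\alpha_j} = \Bigl(\alpha_{iy}\tfrac{F}{\alpha_i}-\alpha_{jy}\tfrac{F}{\alpha_j}\Bigr)\otimes\hat D\wedge\hat E
\]
for all $i,j\in\{0,\dots,r+1\}$. The OS-type relation~\eqref{eq:partial:rels} is then immediate as a telescoping cyclic sum in the scalar factor.

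\textbf{Spanning and degree-two dimension.} The second lemma of~\pref{p:start} says $\{F/\alpha_1,\dots,F/\alpha_{r+1}\}$ is a basis of $S_r$, and since the assumption $F=x\bar F+y^{r+1}$ prevents $x$ from dividing any~$\alpha_i$ with $i\geq1$, each scalar $\alpha_{iy}$ is non-zero; hence $\{\alpha_{iy}F/\alpha_i\}_{i=1}^{r+1}$ is again a basis of~$S_r$. Combined with $\partial_{\alpha_0}\smile\partial_{\alpha_i}=-\alpha_{iy}(F/\alpha_i)\otimes\hat D\wedge\hat E$, this gives the equality $\HH^1(A)\smile\HH^1(A)=S_r\otimes\hat D\wedge\hat E$, completing part~(i). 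In particular $\dim\H^2=r+1$.

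\textbf{Vanishing in degrees $\geq 3$.} Proposition~\pref{prop:cup} states both $S_r\otimes\hat D\smile\HH^2(A)=0$ and $1\otimes\hat E\smile S_r\otimes\hat D\wedge\hat E=0$. Writing $\partial_{\alpha_k}=\phi_k\hat D+\hat E$ and expanding $\partial_{\alpha_k}\smile(\psi\otimes\hat D\wedge\hat E)$ into its two summands, both terms vanish; hence $\H^p=0$ for $p\geq3$ and $\H$ has Hilbert series $1+(r+2)t+(r+1)t^2$.

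\textbf{Comparison with the abstract presentation.} Let $\tilde\H$ denote the graded-commutative algebra generated by degree-one elements $t_0,\dots,t_{r+1}$ modulo the abstract relations $t_it_j+t_jt_k+t_kt_i=0$, and let $\pi\colon\tilde\H\to\H$ be the obvious surjection with $\pi(t_i)=\partial_{\alpha_i}$. The relation $t_0t_i+t_it_j+t_jt_0=0$, together with graded commutativity, gives $t_it_j=t_0t_j-t_0t_i$ for $1\le i<j$, so $\tilde\H^2$ is spanned by $t_0t_1,\dots,t_0t_{r+1}$ and $\dim\tilde\H^2\le r+1$; matching against $\dim\H^2=r+1$ forces $\pi$ to be an isomorphism in degree two. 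For degree three, the same reduction shows that any triple product reduces to something of the form $t_0\cdot(t_0t_j-t_0t_i)=0$, so $\tilde\H^{p}=0$ for $p\ge3$, and hence $\pi$ is an isomorphism. The expected obstacle is only book-keeping: getting the graded-commutativity sign right in $\hat E\smile\phi\hat D$ and invoking the basis lemma to identify $r+1$ independent pairwise products among the $\binom{r+2}{2}$ possibilities; everything else is mechanical.
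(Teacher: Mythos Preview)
Your proof is correct and follows the paper's strategy: compute the pairwise products via the cocycle representatives of Proposition~\pref{prop:partial} and the multiplication table of Proposition~\pref{prop:cup}, then match Hilbert series against the abstract presentation. The only notable difference is cosmetic---you write $\partial_{\alpha_i}\smile\partial_{\alpha_j}=(\alpha_{iy}F/\alpha_i-\alpha_{jy}F/\alpha_j)\otimes\hat D\wedge\hat E$, making the cyclic relation a transparent telescoping sum, whereas the paper rewrites the same quantity as $-\det\begin{psmallmatrix}\alpha_{ix}&\alpha_{jx}\\\alpha_{iy}&\alpha_{jy}\end{psmallmatrix}\tfrac{Q}{\alpha_i\alpha_j}$ and deduces the relation from a vanishing $3\times3$ determinant; similarly, you bound $\dim\tilde\H^2$ via the spanning set $\{t_0t_k\}$ rather than the paper's $\{w_iw_{i+1}\}$.
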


This subalgebra~$\H$ is isomorphic to the de Rham
cohomology of the complement of the arrangement of lines~$\A$. This follows
from a direct computation of this cohomology or, in fact, from the solution
of Arnold's conjecture by Brieskorn; this is discussed in detail in
\cite{OT}*{Section 5.4}.

\begin{proof}
Using Proposition~\pref{prop:cup} and the description given in the proof
of Proposition~\pref{prop:partial} for the
derivations~$\partial_{\alpha_i}$ we compute immediately that
  \[
  \partial_{\alpha_i}\smile\partial_{\alpha_j}
        = - \begin{vmatrix}
            \alpha_{ix} & \alpha_{jx} \\
            \alpha_{iy} & \alpha_{jy} 
            \end{vmatrix}
            \frac{Q}{\alpha_i\alpha_j}
  \]
for all $i$,~$j\in\{0,\dots,r+1\}$. Using this, we see that for all
$i$,~$j$,~$k\in\{0,\dots,r+1\}$ we have
  \[
  \partial_{\alpha_i}\smile\partial_{\alpha_j}
  + \partial_{\alpha_j}\smile\partial_{\alpha_k}
  + \partial_{\alpha_k}\smile\partial_{\alpha_i} 
  =
  - \begin{vmatrix}
    \alpha_i & \alpha_j & \alpha_k \\
    \alpha_{ix} & \alpha_{jx} & \alpha_{kx} \\
    \alpha_{iy} & \alpha_{jy} & \alpha_{ky} \\
    \end{vmatrix}
    \frac{Q}{\alpha_i\alpha_j\alpha_k}
    = 0,
  \]
as the determinant vanishes. This proves the first claim of~\thmitem{1}.
The second one follows immediately from the description of the cup product
of Proposition~\pref{prop:cup}.

\thmitem{2} Let $\mathcal F=\bigoplus_{n\geq0}\mathcal F_n$ be the free graded-commutative
algebra generated by $r+2$ generators $w_0$,~\dots,~$w_{r+1}$ of degree~$1$
subject to the relations $w_iw_j+w_jw_k+w_kw_i=0$, one for each choice of
$i$,~$j$,~$k\in\{0,\dots,r+1\}$. We have $\mathcal F_n=0$ if $n\geq3$: if
$i$,~$j$,~$k\in\{1,\dots,r+1\}$ we have
$w_iw_jw_k=(w_iw_j+w_jw_k+w_kw_i)w_k=0$, because of graded-commutativity.
On the other hand, we have $\dim \mathcal F_2\leq r+1$. To see this, we notice that
$\mathcal F_2$ is spanned by products $w_iw_j$ with $1\leq i<j\leq r+1$. If 
$i+1<j$ then $w_iw_j=-w_{i+1}w_j-w_{i+1}w_i$: it follows from this that the
set of monomials $\{w_iw_{i+1}:0\leq i\leq r\}$ already spans~$\mathcal F_2$.

The first part of the proposition implies that there is a surjective
morphism of graded algebras $f:\mathcal F\to\H$ such that
$f(w_i)=\partial_{\alpha_i}$ for all $i\in\{0,\dots,r+1\}$, and 
this map is also injective because the
dimension of the component of degree~$2$ of~$\H$, which is $S_r\otimes\hat
D\wedge\hat E$, is $r+1$.
\end{proof}

\paragraph\label{p:3-from-2} Proposition~\pref{prop:partial:sub} describes
meaningfully a part of the associative algebra~$\HH^\bullet(A)$, the
subalgebra~$\H$ generated by~$\HH^1(A)$, in terms of the geometry of the
arrangement~$\A$. It is not clear how to make sense of the complete
algebra. We can make the following observation, though. Let us write 
  \[
  \HH^2(A)' = 
        \kk\omega_2\oplus
        (S_{r+1}/\langle xF_x,xF_y,yF_y\rangle\oplus S_1D)\otimes\hat y\wedge\hat D,
   \]
which is a complement of~$\H^2$ in~$\HH^2(A)$, and let
$Q=\alpha_0\dots\alpha_{r+1}$ be a factorization of~$Q$ as a product of
linear factors. If $\delta:A\to A$ is
derivation of~$A$, then our description of~$\HH^1(A)$ implies that there exist 
scalars $\delta_0$,~\dots,~$\delta_{r+1}\in\kk$ and an element $u\in A$
such that $\delta=\sum_{i=0}^{r+1}\delta_i\partial_{\alpha_u}+\ad(u)$, and
it follows easily from Proposition~\pref{prop:cup} that the map
  \[
  \zeta\in\HH^2(A)' \mapsto \delta\smile\zeta\in\HH^3(A)
  \]
is either zero or an isomorphism, provided~$\sum_{i=0}^{r+1}\delta_i$ is
zero or not.

\subsection*{The Gerstenhaber bracket}

\paragraph Using the comparison morphisms of~\pref{p:comparison}, we can
now compute the Gerstenhaber bracket. As usual, this is very
laborious.

\begin{Proposition*}\plabel{prop:bracket}
In $\HH^\bullet(A)$ we have
  \begin{gather}
  [0,\bullet] \quad
      \left\{ \quad
      \begin{lgathered}
      [\HH^0(A),\HH^\bullet(A)] 
          = 0, 
      \end{lgathered}
      \right.
      \\
  [1,1] \quad
      \left\{ \quad
      \begin{lgathered}
      [\HH^1(A),\HH^1(A)] 
          = 0, 
      \end{lgathered}
      \right.
      \\
  [1,2] \quad
      \left\{ \quad
      \begin{lgathered}
      [\HH^1(A),S_r\otimes\hat D\wedge\hat E] 
          = 0, 
          \\
      [u\otimes\hat D+\lambda\otimes\hat E,(v+wD)\otimes\hat y\wedge\hat D]
          = uw\otimes\hat y\wedge\hat D, 
          \\
      [u\otimes\hat D+\lambda\otimes\hat E,\omega_2]
          = ((\mu-\lambda)yF_x+\mu y\bar F-y^2\bar u)\otimes\hat y\wedge\hat D, 
      \end{lgathered}
      \right.
      \\
  [1,3] \quad
      \left\{ \quad
      \begin{lgathered}
      [u\otimes\hat D+\lambda\otimes\hat E,(v+wD)\otimes\hat y\wedge\hat D\wedge\hat E]
          = uw\otimes\hat y\wedge\hat D\wedge\hat E, \\
      [u\otimes\hat D+\lambda\otimes\hat E,\omega_3]
          = ((\mu-\lambda)yF_x+\mu y\bar F-y^2\bar u)
                \otimes\hat y\wedge\hat D\wedge\hat E, 
      \end{lgathered}
      \right.
      \\
  [2,2] \quad
      \left\{ \quad
      \begin{lgathered}
      [S_r\otimes\hat D\wedge\hat E,S_r\otimes\hat D\wedge\hat E] 
          = 0, 
          \\
      [u\otimes\hat D\wedge\hat E,(v+wD)\otimes\hat y\wedge\hat D]
          = uw\otimes\hat y\wedge\hat D\wedge\hat E, 
          \\
      [u\otimes\hat D\wedge\hat E,\omega_2] 
          = (\mu yFx+\mu y\bar F-y^2\bar u)\otimes\hat y\wedge\hat D\wedge\hat E, 
          \\
      [(S_{r+1}+S_1D)\otimes\hat y\wedge\hat D,(S_{r+1}+S_1D)\otimes\hat y\wedge\hat D]
          = 0,
          \\
      [(S_{r+1}+S_1D)\otimes\hat y\wedge\hat D,\omega_2]
          = 0,
          \\
      [\omega_2,\omega_2]
          = 0.
      \end{lgathered}
      \right.
  \end{gather}
Here $u\in S_r$, $\lambda\in\kk$, $v\in S_{r+1}$, $w\in S_1$ and
$\mu\in\kk$ and $\bar u\in S_{r-1}$ are such that $u=\lambda y^r+x\bar u$.
\end{Proposition*}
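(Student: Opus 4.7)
The plan is to transport the Gerstenhaber bracket from the bar complex $\B A$ to the smaller complex $\hom_{A^e}(\P, A)$ using the comparison morphisms $\phi:\P\to\B A$ and $\psi:\B A\to\P$ constructed in \pref{p:comparison}, and then read off the result in terms of the explicit basis of cocycles exhibited in Proposition~\pref{prop:hh}. Concretely, given a $p$-cocycle $\varphi\in\hom_{A^e}(\P_p,A)$ and a $q$-cocycle $\eta\in\hom_{A^e}(\P_q,A)$, the plan is to: (i) pull back to the bar complex by forming $\tilde\varphi=\varphi\circ\phi_p$ and $\tilde\eta=\eta\circ\phi_q$; (ii) compute the standard Gerstenhaber bracket $[\tilde\varphi,\tilde\eta]=\tilde\varphi\bar\circ\tilde\eta-(-1)^{(p-1)(q-1)}\tilde\eta\bar\circ\tilde\varphi$ via the insertion formula on $\B A$; (iii) pull back along $\psi_{p+q-1}$ to obtain a $(p+q-1)$-cocycle in $\hom_{A^e}(\P,A)$; and (iv) reduce it modulo coboundaries, using the explicit identifications made during the computation of $\HH^\bullet(A)$ in Section~\pref{sect:hh}, so as to express it as a linear combination of the canonical cocycle representatives appearing in the target space.

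To keep the computation manageable I would carry out only a minimal set of brackets by direct inspection and deduce the rest from formal properties. First, the vanishing of $[\HH^0(A),\HH^\bullet(A)]$ is immediate because the only scalar class in degree zero is the identity. Next, I would verify $[\HH^1(A),\HH^1(A)]=0$ by evaluating the bracket on the natural basis $\partial_{\alpha_0},\dots,\partial_{\alpha_{r+1}}$ of Proposition~\pref{prop:partial}: each $\partial_{\alpha_i}$ is a derivation of $A$ and the bracket on outer derivations is their commutator as derivations, which one checks vanishes directly using $\partial_{\alpha_i}(\alpha_j)=0$ for $\alpha_j\in S$. With these in hand, the graded Leibniz rule for the Gerstenhaber bracket with respect to the cup product described in Proposition~\pref{prop:cup} reduces most of the degree-$(1,2)$ and $(2,2)$ brackets to bracket computations against a single class not in the image of $\smile$, namely~$\omega_2$. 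Graded antisymmetry and the Jacobi identity then propagate the result to degree-$(1,3)$.

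The genuinely new computations, and where I would spend most of the labour, are the brackets of the form $[\partial,\omega_2]$ for $\partial\in\HH^1(A)$ and $[\omega_2,\omega_2]$. Here both cocycles involve the $y\wedge D$ component on which the comparison map $\phi=\phi_K+\phi_N$ has a nonzero correction $\phi_N$ encoding $\nabla(F)$. One proceeds by substituting the explicit formula for $\omega_2$ into the insertion formula on $\B A$, applying the normalized comparison $\psi$ (which vanishes on standard-monomial tensors by \eqref{eq:psi2uv}), and then simplifying the outcome by subtracting a suitable $\delta^1$- or $\delta^2$-coboundary to land in the canonical form. The cup-product/Leibniz relationship together with the third part of the multiplication table in Proposition~\pref{prop:cup} gives a useful cross-check: e.g.\ $[\partial,\omega_3]$ must equal $1\otimes\hat E\smile[\partial,\omega_2]\pm[\partial,1\otimes\hat E]\smile\omega_2$, and the agreement of the two expressions yields a nontrivial consistency check for the formulas listed under~$[1,3]$ and~$[2,2]$.

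The main obstacle is thus not conceptual but bookkeeping: the correction term $\phi_N$ couples the $y\wedge D$-components of cocycles in degrees $2$ and $3$ to four- and five-fold bar tensors involving $\nabla(F)$ and $(\id_S\otimes\id_{S_1}\otimes\nabla)\nabla(F)$, and the ensuing cochains must be carefully reduced modulo the image of $\delta^{p-1}$ to match the chosen basis. I would organize the computation according to the target cohomological degree, handling $[\HH^1,\HH^2]\to\HH^2$ first, promoting the outcome to $[\HH^1,\HH^3]\to\HH^3$ and $[\HH^2,\HH^2]\to\HH^3$ via Leibniz and Jacobi, and only verifying the vanishing statements ($[\HH^1,S_r\otimes\hat D\wedge\hat E]=0$, $[(S_{r+1}+S_1D)\otimes\hat y\wedge\hat D,\omega_2]=0$, $[\omega_2,\omega_2]=0$) by direct evaluation, observing in each case that the bracket cocycle lies entirely in a subspace already identified in Section~\pref{sect:hh} as coboundary.
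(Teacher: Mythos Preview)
Your overall strategy is exactly the one the paper follows: transport the bracket via the comparison maps of~\pref{p:comparison}, deduce most entries from the Poisson identity applied to the cup-product table of Proposition~\pref{prop:cup}, and carry out by hand only the brackets that cannot be reached this way (those involving $\omega_2$ and the classes in $(S_{r+1}\oplus S_1D)\otimes\hat y\wedge\hat D$, which are not cup products of classes of lower degree).

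There is, however, one concrete error in your description that would derail the computation if followed literally: you have interchanged the roles of~$\phi$ and~$\psi$. A cochain $\varphi\in\hom_{A^e}(\P_p,A)$ is lifted to the bar complex by precomposing with $\psi_p:\B A_p\to\P_p$, so that $\tilde\varphi=\varphi\circ\psi_p$; and the bar bracket $[\tilde\varphi,\tilde\eta]$ is brought back to~$\P$ by precomposing with $\phi_{p+q-1}:\P_{p+q-1}\to\B A_{p+q-1}$. Your formulas $\tilde\varphi=\varphi\circ\phi_p$ and ``pull back along $\psi_{p+q-1}$'' do not even type-check. Once this is fixed, note also that the bookkeeping burden lies less in the correction~$\phi_N$ than in applying~$\psi_2$ to the output of the insertion operator: in practice the $\phi_N$-terms for $1|x\wedge y\wedge D|1$ and $1|y\wedge D\wedge E|1$ land in $S^{\otimes5}$ or $\kk[x,y,E]^{\otimes5}$ and are killed immediately by the insertion of cocycles supported on~$\hat D$, so the delicate step is evaluating~$\psi_2$ on the remaining non-standard bar tensors and then reducing modulo the coboundaries identified in~\pref{p:3-cocycles}. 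Finally, you should add $[(S_{r+1}+S_1D)\otimes\hat y\wedge\hat D,(S_{r+1}+S_1D)\otimes\hat y\wedge\hat D]$ to your list of brackets requiring direct evaluation, since neither factor is a cup product and Jacobi alone will not give it.
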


\begin{proof}
Let us first recall from~\cite{Gerstenhaber} how one can compute the
Gerstenhaber bracket in the standard complex $\hom_{A^e}(\B A,A)$. If
$f:A^{\otimes q}\to A$ is a $q$-cochain in the standard complex
$\hom_{A^e}(\B A,A)$, which we identify as usual with
$\hom(A^{\otimes\bullet},A)$, and $p\geq q$, we denote $\w_p(f):A^{\otimes
p}\to A^{p-q+1}$ the $p$-cochain in the same complex such that
  \begin{multline}
  \w_p(f)(a_1\otimes\cdots\otimes a_{p}) \\
    = \sum_{i=1}^{p-q+1}(-1)^{(q-1)(i-1)}
        a_1\otimes\cdots\otimes a_{i-1}
        \otimes f(a_i\otimes\cdots\otimes a_{i+q-1})
        \otimes a_{i+q}\otimes\cdots\otimes a_{p}.
  \end{multline}
If now $\alpha$ and $\beta$ are a $p$- and a $q$-cocycle in the standard
complex, the Gerstenhaber composition~$\diamond$ (which is usually written
simply~$\circ$) of $\alpha$ and~$\beta$ is the $(p+q-1)$-cochain
  \[
  \alpha\diamond\beta = \alpha\circ\w_{p+q-1}(\beta)
  \]
and the Gerstenhaber bracket is the graded commutator for this composition,
so that
  \[
  [\alpha,\beta] = \alpha\diamond\beta
    -(-1)^{(p-1)(q-1)}  \beta\diamond\alpha.
  \]
Next, if $\alpha$ and $\beta$ are now a $p$- and a~$q$-cochain 
in the complex $\hom_{A^e}(\P,A)$, we
can lift them to a $p$-cochain $\tilde\alpha=\alpha\circ\psi_p$ and a
$q$-cochain $\tilde\beta=\beta\circ\psi_q$ in the
standard complex $\hom_{A^e}(\B A,A)$, and the Gerstenhaber bracket of the
classes of~$\alpha$ and~$\beta$ is then represented by the
$(p+q-1)$-cochain $[\tilde\alpha,\tilde\beta]\circ\phi_{p+q-1}$. This is
the computation we have to do in order to compute brackets
in~$\HH^\bullet(A)$, except that in some favorable circumstances we can
take advantage of the compatibility of the bracket with the product to cut
down the work. We do this in several steps.
\begin{itemize}

\item Since the morphism $\psi$ is normalized and $\HH^0(A)$ is spanned by~$1\in\kk$, it
follows immediately that 
  \[
  [\HH^0(A),\HH^\bullet(A)]=0.
  \]

\item The Gerstenhaber bracket on~$\HH^1(A)$ is induced by the commutator
of derivations. From Proposition~\pref{prop:partial} we have a basis
of~$\HH^1(A)$ whose elements are classes of certain derivations, and it is
immediate to check that those derivations commute, so that 
  \[ \label{eq:br-11}
  [\HH^1(A),\HH^1(A)]=0.
  \]

\item We know that the subspace~$S_r\otimes\hat D\wedge\hat E$ of~$\HH^2(A)$ is
$\HH^1(A)\smile\HH^1(A)$. Since $\HH^\bullet(A)$ is a Gerstenhaber algebra
and we now that~\eqref{eq:br-11} holds, it follows that
  \[
  [\HH^1(A),S_r\otimes\hat D\wedge\hat E] = 0.
  \]
For exactly the same reasons we also have that
  \[
  [S_r\otimes\hat D\wedge\hat E,S_r\otimes\hat D\wedge\hat E]=0.
  \]

\item Let $\alpha=u\otimes\hat D+\lambda\otimes\hat E$, with $u\in S_r$ and
$\lambda\in\kk$. If $\beta=(v+wD)\otimes\hat y\wedge\hat D$, with $v\in
S_{r+1}$ and $w\in S_1$, one can compute that
$(\tilde\alpha\diamond\tilde\beta)\circ\phi=uw\otimes\hat y\wedge\hat D$
and that $(\tilde\beta\diamond\tilde\alpha)\circ\phi=0$: it follows from
this that
  \[
  [\alpha,(v+wD)\otimes\hat y\wedge\hat D] = uw\otimes\hat y\wedge\hat D.
  \]
On the other hand, we have
$(\tilde\omega_2\diamond\tilde\alpha)\circ\phi=0$ and
  \begin{align}
  [\tilde\alpha,\tilde\omega_2]\circ\phi
    &= (\tilde\alpha\diamond\tilde\omega_2)\circ\phi
     = (yu-\lambda y^{r+1})\otimes\hat x\wedge\hat D
       +\lambda y\bar F\otimes\hat y\wedge\hat D \\
    &= \!\begin{multlined}[t][.75\displaywidth]
        \bigl(
        (\mu-\lambda)yF_x+\mu y \bar F-y^2\bar u
       \bigr)\otimes\hat y\wedge\hat D \\
       - \delta^1\bigl(
                 ((\mu-\lambda)\bar F-y\bar u)E\otimes\hat D
                 + (\lambda-\mu)y\otimes\hat x
                 \bigr)
       \end{multlined}
  \end{align}
with $\bar u\in S_{r-1}$ and $\mu\in\kk$ chosen so that $u=\mu y^r+x\bar u$.

Finally, if $v\in S_{r+1}$ and $w\in S_1$, using the compatibility
of the bracket and the product and what we know so far we see that
  \begin{align}
  [\alpha,(v+wD)\otimes\hat y\wedge \hat D\wedge\hat E]
    &= [\alpha,1\otimes E\smile (v+wD)\otimes\hat y\wedge \hat D] \\
    &= 1\otimes E\smile [\alpha,(v+wD)\otimes\hat y\wedge \hat D] \\
    &= 1\otimes E\smile uw\otimes\hat y\wedge\hat D \\
    &= uw\otimes\hat y\wedge\hat D\wedge\hat E 
  \end{align}
and, similarly, that
  \begin{align}
  [\alpha,\omega_3]
    &= [\alpha,\omega_2\smile1\otimes\hat E]
     = [\alpha,\omega_2]\smile1\otimes\hat E
       + \omega_2\smile[\alpha,1\otimes\hat E] \\
    &= \bigl((\mu-\lambda)yF_x+\mu y \bar F-y^2\bar u \bigr)
         \otimes\hat y\wedge\hat D\wedge\hat E.
  \end{align}

\item Let $u\in S_r$. If $v\in S_{r+1}$ and $w\in S_1$, we have
  \begin{align}
  \MoveEqLeft{}
  [u\otimes\hat D\wedge\hat E,(v+wD)\otimes\hat y\wedge\hat D]
      = [u\otimes\hat D\smile1\otimes\hat E,(v+wD)\otimes\hat y\wedge\hat D] \\
    & = \!\begin{multlined}[t][.8\displaywidth]
        [u\otimes\hat D,(v+wD)\otimes\hat y\wedge\hat D]\smile1\otimes\hat E \\
        +
        u\otimes\hat D\smile[1\otimes\hat E,(v+wD)\otimes\hat y\wedge\hat D] 
        \end{multlined}
        \\
    & = uw\otimes\hat y\wedge\hat D\smile 1\otimes\hat E
      = uw\otimes\hat y\wedge\hat D\wedge\hat E.
  \end{align}
Similarly, 
  \begin{align}
  [u\otimes\hat D\wedge\hat E,\omega_2]
    & = [u\otimes\hat D\smile1\otimes\hat E,\omega_2] \\
    & = [u\otimes\hat D,\omega_2]\smile1\otimes\hat E
        + u\otimes\hat D\smile[1\otimes\hat E,\omega_2] \\
    & = \bigl(
        \mu yF_x+\mu y \bar F-y^2\bar u
       \bigr)\otimes\hat y\wedge\hat D\wedge\hat E.
  \end{align}
if $u=\mu y^r+x\bar u$ with $\mu\in\kk$ and $\bar u\in S_{r-1}$.

\item Let now $\alpha=(v+wD)\otimes\hat y\wedge\hat D$ and
$\beta=(s+tD)\otimes\hat y\wedge\hat D$, with $v$,~$s\in S_{r+1}$ and
$w$,~$t\in S_1$. We claim that
$(\tilde\alpha\diamond\tilde\beta)\circ\phi=0$, so that, by symmetry, we
have $[\tilde\alpha,\tilde\beta]\circ\phi=0$. To verify our claim, we
compute:
{\def\fix#1{\hphantom{1|x\wedge D\wedge E|1}\mathllap{#1}}
  \begin{align}
  \fix{1|x\wedge y\wedge E|1}
    & \xmapsto{\phi} \something{\kk[x,y,E]^{\otimes 5}} 
      \xmapsto{\w_3(\tilde\beta)} 0; \\
  \fix{1|x\wedge D\wedge E|1}
    & \xmapsto{\phi} \something{\kk[x,D,E]^{\otimes 5}} 
      \xmapsto{\w_3(\tilde\beta)} 0; \\
  \fix{1|x\wedge y\wedge D|1}
    & \xmapsto{\phi}
      \!\begin{multlined}[t][.7\displaywidth]
      1|x|y|D|1 
      - 1|x|D|y|1 
      + 1|D|x|y|1 \\
      - 1|D|y|x|1 
      + 1|y|D|x|1 
      - 1|y|x|D|1 
      + \something{S^{\otimes 5}}
      \end{multlined}
      \\
      & \xmapsto{\w_3(\tilde\beta)}
      1|(s+tD)|x|1-1|x|(s+tD)|1
      \\
      & \xmapsto{\psi}
        - s\pt{1}|x\wedge s\pt{2}|s\pt{3}
        - t\pt{1}|x\wedge t\pt{2}|t\pt{3}D
        - t|x\wedge D|1
      \\
      & \xmapsto{\alpha}
        0;
      \\
  \fix{1|y\wedge D\wedge E|1}
      & \xmapsto{\phi}
      \!\begin{multlined}[t][.7\displaywidth]
      1|y|D|E|1 
      - 1|y|E|D|1 
      + 1|E|y|D|1 \\
      - 1|E|D|y|1 
      + 1|D|E|y|1 
      - 1|D|y|E|1 
      + \something{\kk[x,y,E]^{\otimes 5}}
      \end{multlined}
      \\
      & \xmapsto{\w_3(\tilde\beta)}
      1|(s+tD)|E|1 - 1|E|(s+tD)|1 
      \\
      & \xmapsto{\psi}
      s\pt1|s\pt2\wedge E|s\pt3 + t\pt1|t\pt2\wedge E|t\pt3D + t|D\wedge E|1
      \\
      & \xmapsto{\alpha}
      0.
  \end{align}}

\item Let again $\alpha=(v+wD)\otimes\hat y\wedge\hat D$, with $v\in
S_{r+1}$ and $w\in S_1$, and let us compute that
$(\tilde\omega_2\diamond\tilde\alpha)\circ\phi_3=-w(yD-y^{r+1}E)\otimes\hat
x\wedge\hat y\wedge\hat D$.
  \begin{align*}
  1|x\wedge y\wedge z|1
     & \xmapsto{\phi_3}
     \something{\kk[x,y,E]^{\otimes5}}
     \xmapsto{\w_2(\tilde\alpha)}
     0 \\
   1|x\wedge D\wedge E|1
     & \xmapsto{\phi_3}
     \something{\kk[x,D,E]^{\otimes 5}}
     \xmapsto{\w_3(\tilde\alpha)}
     0 \\
  1|x\wedge y\wedge D|1
    & \xmapsto{\phi_3}
      \!\begin{multlined}[t][.7\displaywidth]
      1|x|y|D|1 
      - 1|x|D|y|1 
      + 1|D|x|y|1 \\
      - 1|D|y|x|1 
      + 1|y|D|x|1 
      - 1|y|x|D|1 
      + \something{S^{\otimes 5}}
      \end{multlined}
      \\
    & \xmapsto{\w_3(\tilde\alpha)}
      1|(v+wD)|x|1 + 1|x|(v+wD)|1
      \\
    & \xmapsto{\psi_2}
      -v\pt1|x\wedge v\pt 2|v\pt3 
      -w\pt1|x\wedge w\pt 2|w\pt3 D
      -w|x\wedge D|1
      \\
    & \xmapsto{\omega_2}
      -w(yD-y^{r+1}E)
      \\
  1|y\wedge D\wedge E|1
    & \xmapsto{\phi_3}
    \!\begin{multlined}[t][.7\displaywidth]
      1|y|D|E|1 
      - 1|y|E|D|1 
      + 1|E|y|D|1 \\
      - 1|E|D|y|1 
      + 1|D|E|y|1 
      - 1|D|y|E|1 
      + \something{\kk[x,y,E]^{\otimes 5}}
      \end{multlined}
      \\
      & \xmapsto{\w_3(\tilde\alpha)}
      1|(v+wD)|E|1 - 1|E|(v+wD)|1 
      \\
      & \xmapsto{\psi_2}
      v\pt1|v\pt2\wedge E|v\pt3 + w\pt1|w\pt2\wedge E|w\pt3D + w|D\wedge E|1
      \\
      & \xmapsto{\omega_2}
      0.
  \end{align*}
Similarly, we have that
$(\tilde\alpha\diamond\tilde\omega_2)\circ\phi_3=y(v+wD)\otimes \hat x\wedge
\hat y\wedge\hat D$:
  \begin{align*}
  1|x\wedge y\wedge z|1
     & \xmapsto{\phi_3}
       \something{\kk[x,y,E]^{\otimes5}}
       \\
     & \xmapsto{\w_2(\tilde\omega_2)}
       0 \\
   1|x\wedge D\wedge E|1
     & \!\begin{multlined}[t][.7\displaywidth]
       \xmapsto{\phi_3}
       1|x|D|E|1 
       - 1|x|E|D|1 
       + 1|E|x|D|1 \\
       - 1|E|D|x|1 
       + 1|D|E|x|1 
       - 1|D|x|E|1 
       \end{multlined}
       \\
     & \xmapsto{\w_3(\tilde\omega_2)}
       -1|E|(yD-y^{r+1}E)|1+1|(yD-y^{r+1})|E|1
       \\
     & \xmapsto{\psi_2}
       -1|y\wedge E|D-y|D\wedge E|1+\sum_{i=0}^ry^i|y\wedge E|y^{r-i}
       \\
     & \xmapsto{\alpha}
       0
       \\
  1|x\wedge y\wedge D|1
    & \!\begin{multlined}[t][.7\displaywidth]
      \xmapsto{\phi_3}
      1|x|y|D|1 
      - 1|x|D|y|1 
      + 1|D|x|y|1 \\
      - 1|D|y|x|1 
      + 1|y|D|x|1 
      - 1|y|x|D|1 
      + \something{S^{\otimes 5}}
      \end{multlined}
      \\
    & \!\begin{multlined}[t][.7\displaywidth]
      \xmapsto{\w_3(\tilde\omega_2)}
      -1|x|y\bar F E|x|1 - 1|(yD-y^{r+1}E)|y|1 \\
      +1|y\bar FE|x|1+1|y|(yD-y^{r+1}E)|1
      \end{multlined}
      \\
    & \!\begin{multlined}[t][.7\displaywidth]
      \xmapsto{\psi_2}
      y|y\wedge D|1-y^{r+1}|y\wedge E|1 \\
      -(y\bar FE)\pt1|x\wedge(y\bar FE)\pt2|(y\bar FE)\pt3
      \end{multlined}
      \\
    & \xmapsto{\alpha}
      -y(v+wD)
      \\
  1|y\wedge D\wedge E|1
    & \!\begin{multlined}[t][.7\displaywidth]
      \xmapsto{\phi_3}
      1|y|D|E|1 
      - 1|y|E|D|1 
      + 1|E|y|D|1 \\
      - 1|E|D|y|1 
      + 1|D|E|y|1 
      - 1|D|y|E|1 
      + \something{\kk[x,y,E]^{\otimes 5}}
      \end{multlined}
      \\
      & \xmapsto{\w_3(\tilde\omega_2)}
      -1|E|y\bar FE|1 + 1|y\bar FE|E|1 
      \\
      & \xmapsto{\psi_2}
      (y\bar FE)\pt1|(y\bar FE)\pt2\wedge E|(y\bar FE)\pt3
      \\
      & \xmapsto{\alpha}
      0.
  \end{align*}
It follows from this that
  \begin{align}
  [\tilde\omega_2,\tilde\alpha]\circ\phi_3
     &= -w (yD-y^{r+1}E)\otimes\hat x\wedge\hat y\wedge\hat D
        + y(v+wD)\otimes \hat x\wedge \hat y\wedge\hat D 
        \\
     &= (yv+y^{r+1}E)\otimes\hat x\wedge\hat y\wedge\hat D
  \end{align}
and, as we say in~\pref{p:3-cocycles}, this is a coboundary.

\item The one computation that remains is that of the bracket of~$\omega_2$
with itself, which is represented by the $3$-cocycle
  \[ \label{eq:w2w2}
  [\tilde\omega_2,\tilde\omega_2]\circ\phi_3
    = 2(\tilde\omega_2\diamond\tilde\omega_2)\circ\phi_3
    = 2y^2\bar FE\otimes\hat x\wedge\hat y\wedge\hat D,
  \]
as can be seen from the following calculation:
  \begin{align*}
  1|x\wedge y\wedge z|1
     & \xmapsto{\phi_3}
       \something{\kk[x,y,E]^{\otimes5}}
       \\
     & \xmapsto{\w_2(\tilde\omega_2)}
       0 \\
   1|x\wedge D\wedge E|1
     & \!\begin{multlined}[t][.7\displaywidth]
       \xmapsto{\phi_3}
       1|x|D|E|1 
       - 1|x|E|D|1 
       + 1|E|x|D|1 \\
       - 1|E|D|x|1 
       + 1|D|E|x|1 
       - 1|D|x|E|1 
       \end{multlined}
       \\
     & \xmapsto{\w_3(\tilde\omega_2)}
       -1|E|(yD-y^{r+1}E)|1+1|(yD-y^{r+1})|E|1
       \\
     & \xmapsto{\psi_2}
       -1|y\wedge E|D-y|D\wedge E|1+\sum_{i=0}^ry^i|y\wedge E|y^{r-i}
       \\
     & \xmapsto{\omega_2}
       0
       \\
  1|x\wedge y\wedge D|1
    & \!\begin{multlined}[t][.7\displaywidth]
      \xmapsto{\phi_3}
      1|x|y|D|1 
      - 1|x|D|y|1 
      + 1|D|x|y|1 \\
      - 1|D|y|x|1 
      + 1|y|D|x|1 
      - 1|y|x|D|1 
      + \something{S^{\otimes 5}}
      \end{multlined}
      \\
    & \!\begin{multlined}[t][.7\displaywidth]
      \xmapsto{\w_3(\tilde\omega_2)}
      -1|x|y\bar F E|x|1 - 1|(yD-y^{r+1}E)|y|1 \\
      +1|y\bar FE|x|1+1|y|(yD-y^{r+1}E)|1
      \end{multlined}
      \\
    & \!\begin{multlined}[t][.7\displaywidth]
      \xmapsto{\psi_2}
      y|y\wedge D|1-y^{r+1}|y\wedge E|1 \\
      -(y\bar FE)\pt1|x\wedge(y\bar FE)\pt2|(y\bar FE)\pt3
      \end{multlined}
      \\
    & \xmapsto{\omega_2}
      -y^2\bar FE
      \\
  1|y\wedge D\wedge E|1
    & \!\begin{multlined}[t][.7\displaywidth]
      \xmapsto{\phi_3}
      1|y|D|E|1 
      - 1|y|E|D|1 
      + 1|E|y|D|1 \\
      - 1|E|D|y|1 
      + 1|D|E|y|1 
      - 1|D|y|E|1 
      + \something{\kk[x,y,E]^{\otimes 5}}
      \end{multlined}
      \\
      & \xmapsto{\w_3(\tilde\omega_2)}
      -1|E|y\bar FE|1 + 1|y\bar FE|E|1 
      \\
      & \xmapsto{\psi_2}
      (y\bar FE)\pt1|(y\bar FE)\pt2\wedge E|(y\bar FE)\pt3
      \\
      & \xmapsto{\omega_2}
      0.
  \end{align*}
Now the $3$-cocycle~\eqref{eq:w2w2} is a coboundary, again by what we saw
in~\pref{p:3-cocycles}, so that the class of~$\omega_2$ has bracket-square
zero.
\end{itemize}
This completes the proof of the proposition.
\end{proof}

\section{Hochschild homology, cyclic homology and \texorpdfstring{$K$}{K}-theory}
\label{sect:rest}

\paragraph For completeness, we determine the rest of the `usual'
homological invariants of our algebra $A$. Recall that our ground
field~$\kk$ is of characteristic zero.

\begin{Proposition*}\plabel{prop:hom}
The inclusion $T=\kk[E]\to A$ induces an isomorphism in Hochschild
homology and in cyclic homology. In particular, there are isomorphisms of
vector spaces
  \begin{align}
  &  \HH_i(A) \cong
        \begin{cases*}
        T, & \text{if $i=0$ or $i=1$;} \\
        0, & \text{if $i\geq2$;}
        \end{cases*}
  && \HC_i(A) \cong
        \begin{cases*}
        T, & \text{if $i=0$;} \\
        \HC_i(\kk), & \text{if $i>0$.}
        \end{cases*}
  \end{align}
On the other hand, the inclusion~$\kk\to A$ induces an
isomorphism in periodic cyclic homology and in higher $K$-theory.
\end{Proposition*}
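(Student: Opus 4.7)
The strategy mirrors the Hochschild cohomology computation of Section~\pref{sect:hh}: exploit the bimodule resolution~$\P$ from Section~\pref{sect:res} together with the internal grading on~$A$ to reduce everything to a tiny degree-zero chain complex.

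First, apply $A \otimes_{A^e} (\place)$ to $\P$. After the standard identification $A \otimes_{A^e} (A \otimes \Lambda^p V \otimes A) \cong A \otimes \Lambda^p V$, the boundary operators are read off from the formulas of~\pref{p:P} by replacing each bracket $[v,\,\cdot\,]$ with the induced commutator acting on coefficients. The resulting chain complex inherits the internal grading on~$A$, and its differentials are homogeneous of degree zero. Since $E$ satisfies $[E,a] = (\deg a)\, a$ for every homogeneous $a\in A$, the inner derivation $\ad(E)$ acts on this complex as multiplication by the internal degree; as $\ad(E)$ is inner, this action is null-homotopic on the Hochschild complex, and an explicit homotopy can be written down by dualizing the one produced in paragraph~\pref{p:hhdifferentials}. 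Hence the inclusion $(A \otimes \Lambda^\bullet V)_0 \hookrightarrow A \otimes \Lambda^\bullet V$ of the degree-zero subcomplex is a quasi-isomorphism.

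The degree-zero subcomplex is trivial to describe: since $A$ is non-negatively graded with $A_0 = T$, and $\hat x$, $\hat y$, $\hat D$ have positive internal degrees while $\hat E$ has degree zero, one reads off $(A \otimes \Lambda^p V)_0 = 0$ for $p \geq 2$, $(A \otimes V)_0 = T \otimes \hat E$ and $A_0 = T$. The remaining boundary sends $a \otimes \hat E$ to $[E,a]$, which vanishes identically on $T = \kk[E]$. Hence $\HH_0(A) \cong T$, $\HH_1(A) \cong T$ and $\HH_i(A) = 0$ for $i \geq 2$. A routine chase of identifications shows that this isomorphism is realized by the inclusion $T \hookrightarrow A$: on~$\HH_0$ it is the obvious map $T \to A/[A,A]$, which is injective because $[A,A]$ lies entirely in positive internal degree, while on~$\HH_1$ the Hochschild--Kostant--Rosenberg generator $1 \otimes dE$ of $\HH_1(T)$ is sent to $1 \otimes \hat E$.

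The remaining assertions then follow formally. By naturality of Connes' SBI long exact sequence and the five lemma, the isomorphism $\HH_\bullet(T) \to \HH_\bullet(A)$ promotes to an isomorphism $\HC_\bullet(T) \cong \HC_\bullet(A)$; since $T = \kk[E]$ is smooth commutative, Loday's standard computation for a polynomial ring in one variable gives $\HC_0(T) = T$ and $\HC_i(T) \cong \HC_i(\kk)$ for $i \geq 1$, as asserted. Taking the inverse limit also yields an isomorphism on periodic cyclic homology, and since $\kk \to T$ is itself an iso on periodic cyclic homology, so is $\kk \to A$. For higher $K$-theory one uses the positive algebra filtration $\F A$ of Lemma~\pref{lemma:ore}, whose associated graded $\gr(A) \cong \kk[x,y,D,E]$ is regular Noetherian: a Quillen-type filtered argument combined with the fundamental theorem of algebraic $K$-theory for regular rings yields $K_\bullet(A) \cong K_\bullet(\gr A) \cong K_\bullet(\kk)$. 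The only genuine work lies in writing down the null-homotopy for $\ad(E)$ in the first step; every other stage is bookkeeping on a complex whose surviving differential is zero.
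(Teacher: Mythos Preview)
Your approach to Hochschild homology is essentially that of the paper: both arguments reduce to the observation that $\ad(E)$ acts as multiplication by internal degree yet is null-homotopic since it is inner, forcing all homology into degree zero. You carry this out by hand on the small resolution~$\P$, while the paper cites Loday's general results (Proposition~4.1.5 and Theorem~4.1.13) at the level of the bar complex; your version is slightly more work but more self-contained. The passage to cyclic homology via the SBI sequence and the five lemma is equivalent to the paper's appeal to Loday's Theorem~4.1.13.

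For periodic cyclic homology you and the paper part ways: you deduce it from the isomorphism $\HC_\bullet(T)\to\HC_\bullet(A)$ by passing to the inverse limit along the periodicity operator, whereas the paper invokes Block's theorem for filtered algebras (using the order filtration~$\F$ of Lemma~\pref{lemma:ore}) to factor through $\kk[x,y]=F_0A$. Both routes are valid. One caution on your $K$-theory sketch: Quillen's filtered-ring theorem yields $K_\bullet(F_0A)\cong K_\bullet(A)$, not $K_\bullet(\gr A)\cong K_\bullet(A)$ as you wrote; here $F_0A=\kk[x,y]$ while $\gr A=\kk[x,y,D,E]$. The fundamental theorem then gives $K_\bullet(\kk)\cong K_\bullet(\kk[x,y])$, so your conclusion is correct, but the intermediate step should go through $F_0A$ rather than $\gr A$.
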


\begin{proof}
As we know, the algebra~$A$ is $\NN_0$-graded and for each $n\in\NN_0$ its
homogeneous component~$A_n$ of degree~$n$ is the eigenspace corresponding
to the eigenvalue~$n$ of the derivation~$\ad(E):A\to A$. On one hand, this
grading of~$A$ induces as usual an $\NN_0$-grading on the Hochschild
homology~$\HH_\bullet(A)$ of~$A$; on the other, the derivation~$\ad(E)$
induces a linear map $L_{\ad(E)}:\HH_\bullet(A)\to\HH_\bullet(A)$ as
in~\cite{Loday}*{\oldS4.1.4} and, in fact, for all $n\in\NN_0$ the
homogeneous component $\HH_\bullet(A)_n$ of degree~$n$ for that grading
coincides with the eigenspace corresponding to the eigenvalue~$n$
of~$L_{\ad(E)}$. As the derivation~$\ad(E)$ is inner, it follows from
\cite{Loday}*{Proposition 4.1.5} that the map~$L_{\ad(E)}$ is actually the
zero map and this tells us in our situation that $\HH_\bullet(A)_n=0$ for
all $n\neq0$. Of course, this means that $\HH_\bullet(A)=\HH_\bullet(A)_0$
and, since $A$ is non-negatively graded, it is immediate that the 0th
homogeneous component $\HH_\bullet(A)_0$ coincides with the Hochschild
homology~$\HH_\bullet(A_0)$ of $A_0$ and that the map
$\HH_\bullet(A_0)\to\HH_\bullet(A)$ induced by the inclusion
$A_0\hookrightarrow A$ is an isomorphism.
Now, in the notation of~\cite{Loday}*{Theorem 4.1.13}, this tells us that
$\overset\approx\HH_\bullet(A)=0$ so that by that theorem we also have
$\overset\approx\HC_\bullet(A)=0$: this means precisely that the inclusion
$A_0\hookrightarrow A$ induces an isomorphism
$\HC_\bullet(A_0)\to\HC_\bullet(A)$ in cyclic homology. Together with the
well-known computation of the Hochschild homology of a polynomial
ring and that of the cyclic homology of symmetric algebras
\cite{Loday}*{Theorem 3.2.5}, this proves the first claim of the statement.

In the proof of the lemma of~\pref{p:P} we constructed an increasing
filtration~$F$ on the algebra~$A$ with $F_{-1}A=0$ and such that the
corresponding graded algebra is the commutative polynomial ring $\gr
A=\kk[x,y,D,E]$ with generators $x$ and~$y$ in degree~$0$ and $D$ and~$E$
in degree~$1$. In particular, both $\gr A$ and its subalgebra~$\gr_0A$ of
degree~$0$ have finite global dimension. It follows from a theorem of
D.\,Quillen \cite{Quillen}*{p.\,117, Theorem~7} that the inclusion
$\kk[x,y]=F_0A\to A$ induces an isomorphism $K_i(\kk[x,y])\to K_i(A)$ in
$K$-theory for all $i\geq0$. Similarly, the theorem of J.\,Block
\cite{Block}*{Theorem 3.4} tells us that that inclusion induces an
isomorphism $HP_\bullet(\kk[x,y])\to HP_\bullet(A)$ in periodic cyclic
homology. As the inclusion $\kk\to\kk[x,y]$ induces an isomorphism in
$K$-theory and in periodic cyclic homology, we see that the second claim of
the proposition holds.
\end{proof}

\section{The Calabi--Yau property}
\label{sect:cy}

\paragraph\label{p:vee} The enveloping algebra~$A^e$ of~$A$ is a bimodule over
itself, with left and right actions $\triangleright$ and $\triangleleft$
given by `outer' and `inner' multiplication, respectively, so that if
$a\otimes b$, $c\otimes d$ and $e\otimes f$ are elementary tensors
in~$A^e$, we have 
  \[
  a\otimes b \triangleright c\otimes d \triangleleft e\otimes f
    = ace\otimes fdb.
  \]
From this bimodule structure we obtain a duality functor
  \[
  \hom_{A^e}(\place,A^e) : \lmod{A^e}\to\rmod{A^e}.
  \]
On the other hand, using the anti-automorphism $\tau:A^e\to A^e$
such that $\tau(a\otimes b)=b\otimes a$ for all $a$,~$b\in A$, we can turn
a right $A^e$-module $M$ into a left $A^e$-module, with action
$u\triangleright m=m\triangleleft \tau(u)$ for all $u\in A^e$ and all $m\in
M$. In this way, we obtain an isomorphism of categories
$\tau^*:\rmod{A^e}\to\lmod{A^e}$. We denote
$(\place)^\vee:\lmod{A^e}\to\lmod{A^e}$ the composition
$\tau^*\circ\hom_{A^e}(\place,A^e)$.

Let now $W$ be a finite dimensional vector space,
let $W^*$ be the vector space dual to~$W$, and view $A\otimes W\otimes A$
and $A\otimes W^*\otimes A$ as left $A^e$-modules using the usual
`exterior' action. There is a unique $\kk$-linear map
  \[
  \Phi:A\otimes W^*\otimes A\to (A\otimes W\otimes A)^\vee
  \]
such that $\Phi(a\otimes\phi\otimes b)(1\otimes w\otimes 1)=\phi(w)b\otimes
a$ and it is an isomorphism of left $A^e$-modules: we will view it in all
that follows as an identification.

\begin{Proposition}\label{prop:CY}
The algebra $A$ is twisted Calabi-Yau of dimension~$4$ with modular
automorphism $\sigma:A\to A$ such that
  \begin{align}
  &  \sigma(x) = x,
  && \sigma(y) = y,
  && \sigma(D) = D + F_y,
  && \sigma(E) = E + r+2.
  \end{align}
\end{Proposition}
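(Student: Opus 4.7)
The plan is to compute $\Ext^\bullet_{A^e}(A,A^e)$ directly using the bimodule resolution~$\P$ of~\pref{p:P}. Applying $\hom_{A^e}(\place,A^e)$ to~$\P$ and using the identification $\Phi$ from~\pref{p:vee}, I would realize the dual complex as
\[
\P^\vee:\quad A\otimes A \to A\otimes V^*\otimes A \to \cdots \to A\otimes\Lambda^4 V^*\otimes A,
\]
with differentials obtained by transposing those of~$\P$. Each such differential has a Koszul-commutator part together with a `correction' tail coming from the terms involving $\nabla(F)$ and the scalars $r$,~$r+1$,~$r+2$ that appear in the formulas of~\pref{p:P}. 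These correction terms are exactly what will survive dualisation and produce the non-trivial modular twist.

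First I would show that $\Ext^i_{A^e}(A,A^e)=0$ for $0\le i\le 3$. For this I would revisit the increasing filtration~$\F A$ from the proof of the lemma after~\pref{p:P}, which identifies $\gr\P$ with the usual Koszul bimodule resolution of the commutative polynomial algebra $\gr A\cong\kk[x,y,D,E]$. That filtration induces a compatible filtration on~$\P^\vee$ whose associated graded is the dual Koszul complex of~$\gr A$. Since $\kk[x,y,D,E]$ is genuinely Calabi--Yau of dimension~$4$, this dual complex has cohomology concentrated in degree~$4$, where it recovers~$\gr A$ itself. A standard filtration argument --- formally identical to the one used in~\pref{p:P} to transfer exactness from $\gr\P$ to~$\P$ --- then yields the vanishing of $\Ext^i_{A^e}(A,A^e)$ in degrees below~$4$ and identifies $\Ext^4_{A^e}(A,A^e)$ as a cyclic $A$\nobreakdash-bimodule generated by the class~$\omega$ of $1\otimes\hat x\wedge\hat y\wedge\hat D\wedge\hat E\otimes 1$.

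Next, I would read off the bimodule relations imposed on~$\omega$ directly from the formula for~$d_4$ in~\pref{p:P}. The four commutator terms $[v,1|{\cdots}|1]$ dualise to the naive relations $v\omega=\omega v$ for $v\in V$; the two remaining terms of~$d_4$ then modify this. The scalar correction $(r+2)|x\wedge y\wedge D|1$ affects only the $\hat x\wedge\hat y\wedge\hat D$ component and thus shifts the $E$\nobreakdash-relation, contributing the scalar $r+2$. The term $\nabla(F)\wedge x\wedge E$ has vanishing $\hat x$-component, so only its $\nabla_y^y(F)$ part survives, which under the obvious pairing collapses to~$F_y$ and affects the $D$\nobreakdash-relation. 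Putting the four relations together, $\omega$ is seen to satisfy
\[
x\omega=\omega x,\qquad y\omega=\omega y,\qquad D\omega=\omega\sigma(D),\qquad E\omega=\omega\sigma(E),
\]
for the $\sigma$ of the statement; equivalently, $\Ext^4_{A^e}(A,A^e)\cong A_\sigma$, which is exactly what is required for~$A$ to be twisted Calabi--Yau of dimension~$4$ with modular automorphism~$\sigma$.

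The main technical hurdle I expect is the careful bookkeeping in the last step: the signs introduced by wedge permutations and the convention for left-vs-right twisted bimodules together determine on which side of $D$ and $E$ the corrections $F_y$ and $r+2$ land, and getting this right is essential to obtain the formula for~$\sigma$ as stated (in particular $\sigma(D)=D+F_y$ rather than $D-F_y$, and $\sigma(E)=E+r+2$ rather than $E-r-2$). The rest is a direct inspection of the formulas already displayed in~\pref{p:P} together with the filtration argument we have already deployed once.
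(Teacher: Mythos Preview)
Your strategy is sound and genuinely different from the paper's. The paper does not use the filtration here: instead it writes out the differentials of~$\P^\vee$ explicitly and then exhibits an isomorphism of \emph{complexes} $\psi:\P^\vee\to\P\otimes_A A_\sigma$, built degree by degree with the help of auxiliary elements $\xi\in A\otimes V\otimes A$ and $\zeta\in A\otimes\Lambda^2V\otimes A$ that solve lifting problems in the Koszul complex of~$S$. That yields the stronger conclusion --- an isomorphism of resolutions, not merely of their cohomology --- at the cost of more computation. Your route trades this for a short spectral-sequence step (the associated graded of~$\P^\vee$ is the dual Koszul complex of the polynomial ring $\gr A$, which is Calabi--Yau) followed by a direct identification of $H^4$ as the cokernel of~$d_4^\vee$; cyclicity of $H^4$ and the isomorphism with $A_\sigma$ then follow by comparing associated gradeds.

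There is, however, one concrete slip in the last step, and it is exactly the one you anticipated. Dualising $d_4$ with the identifications of~\pref{p:vee}, the image of $d_4^\vee$ in $A\otimes\Lambda^4V^*\otimes A\cong A^e$ is generated as an $A^e$-submodule by
\[
x\omega-\omega x,\qquad y\omega-\omega y,\qquad E\omega-\omega E+(r+2)\omega,
\]
together with a fourth element coming from the $\nabla(F)$-term which, once the first two relations are used to move all factors in~$S$ to one side, simplifies to $D\omega-\omega(D-F_y)$. The relations on~$\omega$ are therefore $v\omega=\omega\,\sigma^{-1}(v)$, not $v\omega=\omega\,\sigma(v)$ as you wrote; it is the former that singles out $A_\sigma$ under the paper's convention (since $1\triangleleft b=\sigma(b)$ there, one has $v\triangleright 1=1\triangleleft\sigma^{-1}(v)$), whereas your displayed relations describe ${}_\sigma A$. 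With this corrected, the surjection $H^4\to A_\sigma$ is an isomorphism because it is so on associated gradeds.
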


Let us recall from~\cite{Ginzburg} that this means that $A$ has a
resolution of finite length by finitely generated projective $A$-bimodules,
that $\Ext_{A^e}^i(A,A^e)=0$ if $i\neq4$ and that $\Ext_{A^e}^4(A,A^e)\cong
A_\sigma$, the $A$-bimodule obtained from $A$ by twisting its right action
using the automorphism~$\sigma$, so that $a\triangleright x\triangleleft
b=ax\sigma(b)$ for all $a$,~$b\in A$ and all $x\in A_\sigma$.

\begin{proof}
A direct computation shows that there is indeed an automorphism~$\sigma$
of~$A$ as in the statement of the proposition.
We already know that $A$ has a resolution~$\P$ of length~$4$ by finitely
generated free $A$-bimodules, so we need only
compute~$\Ext_{A^e}^\bullet(A,A^e)$, and this is the cohomology of the
complex~$\P^\vee$ obtained by applying the functor described in~\pref{p:vee}
to~$\P$. Using the identifications introduced there, this
complex~$\P^\vee$ is
  \[
  \begin{tikzcd}[column sep=0.95em]
  A\otimes A \arrow[r, "d_1^\vee"]
        & A\otimes V^*\otimes A \arrow[r, "d_2^\vee"]
        & A\otimes \Lambda^2V^*\otimes A \arrow[r, "d_3^\vee"]
        & A\otimes \Lambda^3V^*\otimes A \arrow[r, "d_4^\vee"]
        & A\otimes \Lambda^4V^*\otimes A
  \end{tikzcd}
  \]
with left $A^e$-linear differentials such that
  \begin{gather*}
  d_1^\vee(1\otimes 1) 
        = - [x,1\otimes \xx\otimes 1] 
          - [y,1\otimes \yy\otimes 1] 
          - [D,1\otimes \DD\otimes 1] 
          - [E, 1\otimes \EE\otimes 1];
  \\
  \!\begin{multlined}[.9\displaywidth]
  d_2^\vee( 1 \otimes \xx \otimes  1) 
        =   [y, 1\otimes  \xx\wedge\yy \otimes 1 ] 
          + [D, 1\otimes  \xx\wedge\DD \otimes 1 ]
          + [E, 1\otimes  \xx\wedge\EE \otimes 1 ] \\
          + 1\otimes \xx\wedge\EE\otimes  1  
          + \tilde\nabla^{\yy\wedge\DD}_x(F) ;
  \end{multlined}
  \\
  \!\begin{multlined}[.9\displaywidth]
  d_2^\vee( 1 \otimes \yy \otimes  1) 
        = - [x, 1\otimes  \xx\wedge\yy \otimes 1 ] 
          + [D, 1\otimes  \yy\wedge\DD \otimes 1 ] 
          + [E, 1\otimes  \yy\wedge\EE \otimes 1 ] \\
          + 1\otimes \yy\wedge\EE\otimes  1
          + \tilde\nabla^{\yy\wedge\DD}_y(F) ;
  \end{multlined}
  \\
  \!\begin{multlined}[.9\displaywidth]
  d_2^\vee( 1 \otimes \DD \otimes  1) 
        = - [x, 1\otimes  \xx\wedge\DD \otimes 1 ] 
          - [y, 1\otimes  \yy\wedge\DD \otimes 1 ]
          + [E, 1\otimes  \DD\wedge\EE \otimes 1 ] \\
          + r\otimes \DD\wedge\EE\otimes  1 ;
  \end{multlined}
  \\
  d_2^\vee( 1 \otimes \EE \otimes  1) 
        = - [x, 1\otimes  \xx\wedge\EE \otimes 1 ] 
          - [y, 1\otimes  \yy\wedge\EE \otimes 1 ]
          - [D, 1\otimes  \DD\wedge\EE \otimes 1 ] ;
  \\
  \!\begin{multlined}[.9\displaywidth]
    d_3^\vee(1 \otimes  \xx\wedge\yy\otimes  1) 
        = - [D, 1\otimes \xx\wedge\yy\wedge\DD\otimes 1]
          - \tilde\nabla^{\xx\wedge\yy\wedge\DD}_y(F) \\
          - [E, 1\otimes \xx\wedge\yy\wedge\EE\otimes 1] 
          - 2\otimes \xx\wedge\yy\wedge\EE\otimes 1 ;
  \end{multlined}
  \\
  \!\begin{multlined}[.9\displaywidth]
  d_3^\vee(1 \otimes  \xx\wedge\DD\otimes  1) 
        =   [y, 1\otimes \xx\wedge\yy\wedge\DD\otimes 1]
          - [E, 1\otimes \xx\wedge\DD\wedge\EE\otimes 1] \\
          - (r+1)\otimes \xx\wedge\DD\wedge\EE\otimes 1 ;
  \end{multlined}
  \\
  \!\begin{multlined}[.9\displaywidth]
  d_3^\vee(1 \otimes  \xx\wedge\EE\otimes  1) 
        =   [y, 1\otimes \xx\wedge\yy\wedge\EE\otimes 1]
          + [D, 1\otimes \xx\wedge\DD\wedge\EE\otimes 1] \\
          + \tilde\nabla^{\yy\wedge\DD\wedge\EE}_x(F) ;
  \end{multlined}
  \\
  \!\begin{multlined}[.9\displaywidth]
  d_3^\vee(1 \otimes  \yy\wedge\DD\otimes  1) 
        = - [x, 1\otimes \xx\wedge\yy\wedge\DD\otimes 1]
          - [E, 1\otimes \yy\wedge\DD\wedge\EE\otimes 1] \\
          - (r+1)\otimes \yy\wedge\DD\wedge\EE\otimes 1 ;
  \end{multlined}
  \\
  \!\begin{multlined}[.9\displaywidth]
  d_3^\vee(1 \otimes  \yy\wedge\EE\otimes  1) 
        = - [x, 1\otimes \xx\wedge\yy\wedge\EE\otimes 1]
          + [D, 1\otimes \yy\wedge\DD\wedge\EE\otimes 1] \\
          + \tilde\nabla^{\yy\wedge\DD\wedge\EE}_y(F) ;
  \end{multlined}
  \\
  d_3^\vee(1\otimes  \DD\wedge\EE\otimes  1) 
        = - [x, 1\otimes \xx\wedge\DD\wedge\EE\otimes 1] 
          - [y,1\otimes \yy\wedge\DD\wedge\EE\otimes 1] ;
  \\
  \!\begin{multlined}[.9\displaywidth]
  d_4^\vee (1 \otimes  \xx\wedge\yy\wedge\DD\otimes 1) 
        =   [E , 1 \otimes \xx\wedge\yy\wedge\DD\wedge\EE\otimes 1] \\
          + (r+2) \otimes \xx\wedge\yy\wedge\DD\wedge\EE\otimes 1;
  \end{multlined}
  \\
  d_4^\vee (1 \otimes  \xx\wedge\yy\wedge\EE\otimes 1) 
        = - [D , 1 \otimes \xx\wedge\yy\wedge\DD\wedge\EE\otimes 1]
          - \tilde\nabla^{\xx\wedge\yy\wedge\DD\wedge\EE}_y(F);
  \\
  d_4^\vee( 1\otimes  \xx\wedge\DD\wedge\EE\otimes 1) 
        =   [y, 1\otimes \xx\wedge\yy\wedge\DD\wedge\EE\otimes 1] ;
  \\
  d_4^\vee ( 1\otimes \yy\wedge\DD\wedge\EE\otimes 1) 
        = - [x, 1 \otimes \xx\wedge\yy\wedge\DD\wedge\EE\otimes 1],
  \end{gather*}
where each $\tilde\nabla^u_x$ is the image of $\nabla^u_x$ under the map
$a\otimes u\otimes b\mapsto b\otimes u\otimes a$, and the same with each
$\tilde\nabla^u_y$.

Let us now identify $\P\otimes_A A_\sigma $ with $\P$ as vector spaces,
remembering that the bimodule structure on $\P$ with this identification is
given by $a\triangleright x\triangleleft b=ax\sigma(b)$ for all~$a$,~$b\in
A$ and all~$x\in \P$. There is a morphism of complexes of $A$-bimodules
$\psi : \P^\vee\to \P\otimes_AA_\sigma$ such that
  \begin{gather}
  \psi(1\otimes  \xx\wedge\yy\wedge\DD\wedge\EE \otimes 1) 
        = 1\otimes 1 ; \\[3pt]
  \psi(1\otimes  \yy\wedge\DD\wedge\EE \otimes 1)
        = -1\otimes x\otimes 1 ; \\
  \psi( 1\otimes \xx\wedge\DD\wedge\EE\otimes 1)
      = 1\otimes y\otimes 1 ; \\
  \psi( 1\otimes  \xx\wedge\yy\wedge\EE \otimes  1)
      = -1\otimes D\otimes 1 - \xi ; \\
  \psi( 1\otimes  \xx\wedge\yy\wedge\DD \otimes 1)
      =  1 \otimes E\otimes  1 ; \\[3pt]
  \psi( 1\otimes  \DD\wedge\EE \otimes 1)
      = -1 \otimes  x\wedge y \otimes  1 ; \\
  \psi( 1 \otimes  \xx\wedge\DD \otimes 1)
      = 1\otimes y\wedge E\otimes 1 ; \\
  \psi( 1 \otimes  \yy\wedge\DD\otimes  1)
      = - 1\otimes x\wedge E\otimes 1 ; \\
  \psi( 1 \otimes  \yy\wedge\EE \otimes  1)
      = 1 \otimes  x\wedge D \otimes  1 + x\wedge\xi ; \\
  \psi( 1 \otimes  \xx\wedge\EE \otimes  1)
      = - 1\otimes  y\wedge D\otimes 1 + \zeta ; \\
  \psi( 1 \otimes  \xx\wedge\yy \otimes  1)
      = -1\otimes D\wedge E\otimes 1 - \xi\wedge E ; \\[3pt]
  \psi( 1 \otimes  \EE \otimes  1)
      = 1 \otimes  x\wedge y\wedge D \otimes  1; \\
  \psi( 1 \otimes  \DD\otimes  1)
      = - 1\otimes x\wedge y\wedge E\otimes 1 ; \\
  \psi( 1 \otimes  \yy \otimes  1)
      = 1\otimes x\wedge D\wedge E\otimes 1 + x\wedge\xi\wedge E; \\
  \psi( 1 \otimes  \xx \otimes  1)
      = -1\otimes y\wedge D\wedge E\otimes 1 + \zeta\wedge E ; \\[3pt]
  \psi( 1\otimes   1)
      = 1\otimes x\wedge y\wedge D\wedge E\otimes 1,
  \end{gather}
where $\xi\in A\otimes V\otimes A$ and $\zeta\in A\otimes\Lambda^2V\otimes
A$ are chosen so that
  \begin{align}
   &d_1(\xi)
    = \tilde \nabla_y(F) -1|F_y,
   &&d_2(\zeta)
    =  \xi y - y \xi - 1 | y |F_y - \tilde \nabla _x^x(F) + \nabla(F).
  \end{align}
That there are elements which satisfy these two conditions follows
immediately from the exactness of the Koszul resolution of~$S$ as an
$S$-bimodule ---indeed, the right hand sides of the two conditions are
cycles in that complex--- but we can exhibit a specific choice:
if we write $F=\sum_{a+b=r+1}c_ax^ay^b$, with
$c_0$,~\dots,~$c_{r-1}\in\kk$, then we can pick
  \begin{align}
  &  \xi =\sum_{\substack{a+b=r+1\\s+t+1=b-1}}(t+1) c_a y^s|y|x^a y^t,
  && \zeta = \sum_{\substack{a+b=r+1\\s+t+1=b\\s'+t'+1=a}}
        c_ax^{s'}y^s|x\wedge y|x^{t'}y^t.
  \end{align}
That these formulas for~$\psi$ do indeed define a morphism of complexes
follows from a direct computation and it is easy to see that it is in fact
an isomorphism, as for an appropriate ordering of the bases of the
bimodules involved the matrices for the components of~$\psi$ are upper
triangular. Of course, it therefore induces an isomorphism in cohomology
and, since $A_\sigma$ is $A$-projective on the left, we conclude that there
are isomorphisms of $A$-bimodules
  \[
  H^i(\P^\vee) 
    \cong H^i(\P\otimes_A A_\sigma)
    \cong \begin{cases}
      A_\sigma  &\text{if $i=4$;} \\
      0 &\text{if $i>0$.}
      \end{cases}
  \]
This completes the proof.
\end{proof}

\pagebreak[4]

\section{Automorphisms, isomorphisms and normal elements}
\label{sect:autos}

\paragraph Our next objective is to compute the group of automorphisms of
the algebra~$A$. We start by describing some graded automorphisms of~$A$. Later
we will see that these are, in fact, \emph{all} the graded automorphisms of
our algebra, and that together with the exponentials of locally
$\ad$-nilpotent elements they generate the whole group $\Aut(A)$.

\begin{Lemma*}\plabel{lemma:iso:gr}
If $\begin{psmallmatrix}a&b\\c&d\end{psmallmatrix}\in\GL_2(\kk)$
and $e\in\kk^\times$ are such that
  \[ \label{eq:iso:cond}
  \frac{1}{(ad-bc)e}Q(ax+by,cx+dy) = Q(x,y),
  \]
and $v\in\kk$ and $\phi_0\in S_r$,
then there is a homogeneous algebra automorphism $\theta:A\to A$ such that
  \begin{align}
  &  \theta(x) = ax+by,
  && \theta(y) = cx+dy,
  && \theta(E) = E+v
  \end{align}
and
  \[ \label{eq:iso:D}
  \theta(D) =
    \begin{cases*}
    \phi_0 - \frac{ebF}{ax+by}E + eD, & if $b\neq 0$; \\
    \phi_0+eD, & if not.
    \end{cases*}
  \]
\end{Lemma*}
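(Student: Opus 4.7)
The plan is to define $\theta$ on the four generators $x,y,D,E$ of~$A$ by the formulas in the statement, to check that it respects the defining relations \eqref{eq:relations} of~\pref{p:start}, and then to promote the resulting endomorphism to an automorphism using the algebra filtration of Lemma~\pref{lemma:ore}.

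First I want to observe that the hypothesis~\eqref{eq:iso:cond} makes the prescription for $\theta(D)$ well-defined when $b\neq 0$. That hypothesis says that the linear map $S_1\to S_1$ extending $x\mapsto ax+by$ and $y\mapsto cx+dy$ sends $Q$ to $(ad-bc)e\,Q$ as a polynomial, so it permutes, up to non-zero scalars, the linear factors of~$Q$. Since $x$ is one of those factors, $ax+by$ must also be a scalar multiple of a linear factor of~$Q$; if $b\neq 0$ it is linearly independent from~$x$ and therefore divides $F=Q/x$, so $F/(ax+by)\in S_r$ and $\tfrac{ebF}{ax+by}E\in A_r$.

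I next verify the six relations. The commutator $[\theta(y),\theta(x)]$ vanishes trivially, and the three relations involving $\theta(E)=E+v$ reduce to $[E,\theta(x)]=\theta(x)$, $[E,\theta(y)]=\theta(y)$ and $[E,\theta(D)]=r\theta(D)$, which hold because $\theta(x)$, $\theta(y)$ and $\theta(D)$ are by construction homogeneous of degrees $1$, $1$ and $r$. The two remaining commutators are the main computation. Since $\phi_0\in S$ commutes with $S_1$, it contributes nothing to $[\theta(D),\theta(x)]$ or $[\theta(D),\theta(y)]$; assuming $b\neq 0$, the remaining summands give
  \[
  [\theta(D),\theta(x)] = ebF - \tfrac{ebF}{ax+by}(ax+by) = 0
  \]
and
  \[
  [\theta(D),\theta(y)] = edF - \tfrac{ebF(cx+dy)}{ax+by} = \tfrac{(ad-bc)e\,xF}{ax+by}.
  \]
The latter equals $\theta(F)=F(ax+by,cx+dy)$ because expanding $\theta(Q)=\theta(x)\theta(F)$ and using~\eqref{eq:iso:cond} yields $(ax+by)F(ax+by,cx+dy)=(ad-bc)e\,xF$. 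The case $b=0$ is analogous and easier, and so we get an algebra endomorphism $\theta\colon A\to A$.

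Finally, to see that $\theta$ is an automorphism, I pass to the filtration $\F A$ of the proof of Lemma~\pref{lemma:ore}, for which $\gr A=\kk[x,y,D,E]$ with $x,y$ in degree~$0$ and $D,E$ in degree~$1$. Clearly $\theta$ preserves this filtration, and the induced endomorphism $\gr\theta$ of $\gr A$ acts on the degree-zero part as the linear automorphism of $S_1$ with matrix $\begin{psmallmatrix}a&b\\c&d\end{psmallmatrix}$, and on the free rank-two $S$-module $\gr_1 A/\gr_0 A$ with basis $\{D,E\}$ as the upper-triangular matrix with diagonal $(e,1)$. Both restrictions are invertible since $ad-bc\neq 0$ and $e\neq 0$, so $\gr\theta$ is an automorphism of $\gr A$; a routine induction on filtration degree then shows that $\theta$ itself is bijective. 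The main obstacle is the identity $[\theta(D),\theta(y)]=\theta(F)$, where the condition~\eqref{eq:iso:cond} enters decisively through the rewriting of $F(ax+by,cx+dy)$ as $(ad-bc)e\,xF/(ax+by)$; once this observation is in place, everything else is bookkeeping.
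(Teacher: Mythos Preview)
Your proof is correct and follows exactly the approach the paper indicates: it says only that the lemma ``is proved by a straightforward calculation'' and notes that the quotient in~\eqref{eq:iso:D} is always a polynomial, and you have supplied precisely that calculation together with a clean filtration argument for bijectivity. The one cosmetic slip is that the matrix of $\gr\theta$ on $\gr_1A$ with basis $\{D,E\}$ is lower- rather than upper-triangular (since $D\mapsto eD-\tfrac{ebF}{ax+by}E$ and $E\mapsto E$), but the diagonal entries and hence the invertibility are as you state.
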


\begin{proof}
This is proved by a straightforward calculation. It should be noted that
the quotient appearing in the formula~\eqref{eq:iso:D} is always a
polynomial.
\end{proof}

\paragraph Recall that a \emph{higher derivation} of~$A$ is a sequence
$d=(d_i)_{i\geq0}$ of linear maps $A\to A$ such that $d_0=\id_A$
and for all $a$,~$b\in A$ and all $i\geq0$ we have the \emph{higher Leibniz
identity}
  \[
  d_i(ab) = \sum_{s+t=i} d_s(a)d_t(b).
  \]
It is clear that if $d=(d_i)_{i\geq0}$ is a higher derivation and $m\geq0$,
then the sequence $d^{[m]}=(d_i^{[m]})_{i\geq0}$ with
  \[
  d_i^{[m]} =
    \begin{cases*}
    d_{i/m}, & if $i$ is divisible by $m$; \\
    0, & if not
    \end{cases*}
  \]
is also a higher derivation. On the other hand,
if $d=(d_i)_{i\geq0}$ and $d'=(d'_i)_{i\geq0}$ are higher derivations
of~$A$, we can construct a new higher derivation $(d''_i)_{i\geq0}$, which
we denote $d\circ d'$, putting $d''_i = \sum_{s+t=i} d_s\circ d'_t$ for all
$i\geq0$. Finally, if $\delta:A\to A$ is a derivation of~$A$, then the
sequence $(\tfrac{1}{i!}\delta^i)_{i\geq0}$ is a higher
derivation, which we denote by $\exp(\delta)$; notice that this makes sense
because our ground field~$\kk$ has characteristic zero.

We let $D(A)$ be the associative subalgebra of~$\End_\kk(A)$ generated
by~$\Der(A)$, and say that two higher derivations $d=(d_i)_{i\geq0}$ and
$d'=(d'_i)_{i\geq0}$ of~$A$ are equivalent, and write $d\sim d'$, if for
all $i\geq0$ the map $d_i-d'_i$ is in the subalgebra
of~$\End_\kk(A)$ generated by~$D(A)$ and $d_0$,~\dots,~$d_{i-1}$; one can
check that this is indeed an equivalence relation on the set of higher
derivations.

\paragraph We recall the following very useful lemma from~\cite{AC}:

\begin{Lemma*}\plabel{lemma:ac}
If $d=(d_i)_{i\geq0}$ is a higher derivation of~$A$, then $d_i\in D(A)$ for
all $i\geq0$.
\end{Lemma*}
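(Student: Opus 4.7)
The plan is to proceed by strong induction on $i$, showing that $d_i \in D(A)$. The base $i=0$ is trivial since $d_0=\id_A$, and for $i=1$ the higher Leibniz identity degenerates to the ordinary Leibniz rule, so $d_1 \in \Der(A) \subseteq D(A)$. For the inductive step, I will lean on the elementary observation that whenever a higher derivation $d'$ satisfies $d'_1 = \cdots = d'_{n-1} = 0$, the higher Leibniz identity at level $n$ collapses to $d'_n(ab) = a\,d'_n(b) + d'_n(a)\,b$, because every cross-term $d'_s(a)\,d'_{n-s}(b)$ with $0<s<n$ vanishes; in other words, $d'_n$ is automatically a derivation of $A$.

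The core idea is then to successively twist $d$ by exponentials so as to kill off its low-degree terms. Starting from $d^{(0)}:=d$, I will inductively define $d^{(k)} := \exp(-\delta_k)^{[k]} \circ d^{(k-1)}$, where $\delta_k := d^{(k-1)}_k$. Because $d^{(k-1)}$ will satisfy $d^{(k-1)}_1 = \cdots = d^{(k-1)}_{k-1} = 0$ by an inner induction on $k$, the observation above guarantees that $\delta_k$ is a derivation, so that $\exp(-\delta_k)^{[k]}$ is a bona fide higher derivation; a short calculation with the composition formula $(e \circ d')_j = \sum_{s+t=j} e_s \circ d'_t$ then shows that $d^{(k)}_j = 0$ for $1 \leq j \leq k$, the only contribution at degree~$k$ being $d^{(k-1)}_k + (-\delta_k) = 0$.

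After $i-1$ such iterations we reach a higher derivation $d^{(i-1)}$ with $d^{(i-1)}_1 = \cdots = d^{(i-1)}_{i-1} = 0$, and one more application of the observation shows that $d^{(i-1)}_i$ is a derivation, so it belongs to $D(A)$. Unwinding the twists, $d_i$ differs from $d^{(i-1)}_i$ by a polynomial expression in the operators $\delta_1, \ldots, \delta_{i-1}$ and $d_1, \ldots, d_{i-1}$; by the outer induction hypothesis each of these lies in $D(A)$, so the polynomial does as well, and therefore $d_i \in D(A)$. The only delicate point is the simultaneous bookkeeping --- verifying at each stage that the corrections introduced by composing with the successive exponentials remain inside $D(A)$ --- but this is routine once the composition formula for higher derivations is in hand, and I expect the main obstacle to be notational rather than conceptual.
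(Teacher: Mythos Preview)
Your proposal is correct and follows essentially the same approach as the paper: both iteratively compose $d$ with higher derivations of the form $\exp(-\delta_j)^{[j]}$ to kill off the low-degree components, then use that the first surviving component of the twisted higher derivation is an honest derivation. The paper packages the bookkeeping via the equivalence relation~$\sim$ rather than your explicit polynomial unwinding, but the underlying mechanism is identical.
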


\begin{proof}
The result is an easy consequence of the fact that
  \[ \label{eq:iso:claim}
  \claim{if $d$ is a higher derivation of~$A$ and $j\geq1$, then there
  exists a higher derivation $d'=(d'_i)_{i\geq0}$ such that $d'\sim d$,
  $d'_i=0$ if $1<i<j$, and $d'_j$ is an element of~$\Der(A)$.}
  \]
To prove that this holds, let $d=(d_i)_{i\geq0}$ and suppose there is an $j\geq1$ such
that that $d_i=0$ if $1<i<j$. The higher Leibniz identity implies that
$d_{j}$ is an element of~$\Der(A)$, and then we can consider the higher
derivation $\exp(-d_{j})^{[j]}$. We let $d'=(d'_i)_{i\geq0}$ be the
composition $\exp(-d_{j})^{[j]}\circ d$. It is immediate that $d\sim d'$
and a simple computation shows that $d'_i=0$ if $1<i<j+1$. The
claim~\eqref{eq:iso:claim} follows inductively from this.
\end{proof}

\begin{Lemma}\label{lemma:iso:xy}
An element of~$A$ commutes with~$x$ and with~$y$ if and only if it belongs
to~$S$.
\end{Lemma}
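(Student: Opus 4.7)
The ``if'' direction is immediate since $S=\kk[x,y]$ is commutative. My plan for the converse is to exploit the fact that, by construction, $A$ sits inside $\D(S)\subseteq\End_\kk(S)$, so the tautological action of $A$ on $S$ is faithful, and the identification $\phi:S\hookrightarrow A$ realizes the subalgebra $S\subseteq A$ as the operators of multiplication by polynomials.

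Suppose $a\in A$ satisfies $[x,a]=0=[y,a]$. Viewed as an element of $\End_\kk(S)$, $a$ commutes with multiplication by $x$ and by $y$, hence with multiplication by every element of the algebra they generate, which is the whole of $S$. This says precisely that $a:S\to S$ is a left $S$-module endomorphism. Since $S$ is free of rank one as a left module over itself, any such endomorphism is multiplication by a unique element of $S$, and evaluating at $1$ shows that this element is $a(1)\in S$. So, as operators on $S$, we have $a=\phi(a(1))$, and by faithfulness of the action this equality holds already in~$A$, whence $a\in S$.

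There is no real obstacle here beyond noticing that no internal computation in the presentation of $A$ is needed: the statement follows from formal properties of the embedding $A\subseteq\End_\kk(S)$ together with the observation that the centralizer of $S$ inside $\End_\kk(S)$ consists exactly of the multiplication operators. Were one to attempt a direct argument via the $\kk$-basis $\{x^iy^jD^kE^l\}$ from Lemma~\ref{lemma:ore}, one would have to expand $[x,D^kE^l]$ and $[y,D^kE^l]$ and run an induction on the filtration degree defined in the proof of that lemma; this works, but is noticeably more laborious than the faithfulness argument above.
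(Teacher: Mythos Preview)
Your argument is correct and genuinely different from the paper's. The paper proceeds by direct computation in the basis $\{x^iy^jD^kE^l\}$: writing $e=\sum_{i=0}^m\phi_iE^i$ with $\phi_i\in\kk[x,y,D]$, the vanishing of $[x,e]$ forces all $\phi_i$ with $i>0$ to be zero (via the identity $[x,E^i]=x\tau_1(E^i)$), and then writing $e=\sum_{i=0}^n\psi_iD^i$ with $\psi_i\in S$, the vanishing of $[y,e]$ forces all $\psi_i$ with $i>0$ to be zero (via $[D^i,y]\equiv iFD^{i-1}$ modulo lower $D$-degree). Your route instead exploits the ambient embedding $A\subseteq\End_\kk(S)$ and the triviality of $\End_S(S)$, bypassing all computation.

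Your approach is shorter and more conceptual; it works verbatim for any subalgebra of $\End_\kk(S)$ containing $S$, so in particular for $\D(\A)$ attached to an arbitrary arrangement in any dimension. The paper's approach, on the other hand, stays entirely within the presentation of $A$ as an iterated Ore extension and does not invoke the faithfulness of the tautological module~$S$; this is the sort of internal argument one would want if, say, one were working with a quotient or deformation of $A$ that no longer acts faithfully on a polynomial ring.
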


\begin{proof}
The sufficiency of the condition is clear. To prove the necessity, let $e\in
A$ be such that $[x,e]=[y,e]=0$.
There are an integer $m\geq0$ and elements $\phi_0$,~\dots,~$\phi_m$ in the
subalgebra generated by~$x$,~$y$ and~$D$ in~$A$ such that
$e=\sum_{i=0}^m\phi_iE^i$, and we have
$0=[x,e_l]=\sum_{i=0}^m\phi_i\tau_1(E^i)$: this tells us that $\phi_i=0$ if
$i>0$, and that $e=\phi_0$. In particular, there are an integer $n\geq0$
and elements $\psi_0$,~\dots,~$\psi_n$ in~$S$ such that
$e=\sum_{i=0}^n\psi_iD^i$.
If $i\geq0$ we have $[D^i,y]\equiv iFD^{i-1}\mod\bigoplus_{j=0}^{i-2}SD^j$,
so that
  \[
  0 = [e,y] = \sum_{i=0}^n\psi_i[D^i,y]
    \equiv n\psi_nFD^{n-1} \mod \bigoplus_{j=0}^{n-2}SD^i.
  \]
Proceeding by descending induction we see from this that $\psi_i=0$ if
$i>0$, so that $e=\psi_0\in S$.
\end{proof}

\begin{Proposition}\label{prop:iso:id}
If $\theta:A\to A$ is an automorphism of~$A$ such that for all $i\geq0$ and
all $a\in A_i$ we have $\theta(a)\in a+\bigoplus_{j>i}A_j$, then here
exists an $f\in S$, uniquely determined up to the addition of a constant, such that
  \begin{align}
  &  \theta(x) = x,
  && \theta(y) = y,
  && \theta(D) = D-Ff_y,
  && \theta(E) = E-[E,f].
  \end{align}
Conversely, every $f\in S$ determines in this way an automorphism of~$A$
satisfying that condition.
\end{Proposition}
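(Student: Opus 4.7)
\emph{Converse and uniqueness.} Given $f\in S$, extend the formulas multiplicatively to define $\theta_f\colon A\to A$. The defining relations of $A$ are preserved by $\theta_f$ thanks to the identities $[f,x]=[f,y]=0$, $[D,g]=Fg_y$ and $[E,g]=xg_x+yg_y$ for $g\in S$, and the filtration condition is immediate on generators and extends by multiplicativity. Since $S$ is commutative, $\theta_f\circ\theta_g=\theta_{f+g}$ for $f,g\in S$, so $\theta_f$ is an automorphism with inverse $\theta_{-f}$. For uniqueness of $f$ up to a constant: if $f,f'\in S$ both produce $\theta$, comparing $\theta(D)$ gives $Ff_y=Ff'_y$, and $S$ being a domain yields $f-f'\in\kk[x]$; comparing $\theta(E)$ then reduces $[E,f-f']=x(f-f')_x=0$, whence $f-f'$ is a constant.

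\emph{Forward direction.} Given $\theta$ satisfying the hypothesis, write $\theta(x)=x+u$, $\theta(y)=y+v$, $\theta(E)=E+z$, $\theta(D)=D+w$ with $u,v\in A_{\geq 2}$, $z\in A_{\geq 1}$, $w\in A_{\geq r+1}$, and decompose into homogeneous components $u=\sum_{k\geq 2}u_k$, and similarly for $v$, $z$, $w$. The plan is to prove by induction on $k\geq 1$ the simultaneous statements: $z_k\in S_k$, $u_{k+1}=v_{k+1}=0$, and $w_{r+k}=F(z_k)_y/k$. Once established for all $k$, set $f_k:=-z_k/k\in S_k$ and $f:=\sum_{k\geq 1}f_k\in S$; then $-[E,f]=z$ (using the Euler identity $[E,f_k]=kf_k$) and $-Ff_y=w$, yielding exactly $\theta(x)=x$, $\theta(y)=y$, $\theta(E)=E-[E,f]$ and $\theta(D)=D-Ff_y$.

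The base case $k=1$ is immediate: $z_1\in A_1=S_1$, and the degree-$2$ part of $[\theta(E),\theta(x)]=\theta(x)$ gives $u_2=-[z_1,x]=0$ (likewise $v_2=0$), while the degree-$(r+1)$ part of $[\theta(E),\theta(D)]=r\theta(D)$ yields $w_{r+1}=F(z_1)_y$. In the inductive step, the induction hypothesis kills every middle cross term, so the degree-$(k+1)$ part of $[\theta(E),\theta(x)]=\theta(x)$ reads $ku_{k+1}+[z_k,x]=0$, and analogously $kv_{k+1}+[z_k,y]=0$ and $kw_{r+k}+[z_k,D]=0$. By Lemma~\ref{lemma:iso:xy}, the required vanishing $u_{k+1}=v_{k+1}=0$ is equivalent to $z_k$ commuting with both $x$ and $y$, equivalently $z_k\in S_k$; and granting this, $[z_k,D]=-F(z_k)_y$ gives the desired formula for $w_{r+k}$.

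\emph{The main obstacle} is therefore to prove $z_k\in S_k$ in the inductive step. Using Lemma~\pref{lemma:ore}, expand $z_k=\sum_l\phi_{k,l}E^l$ along the standard monomial basis (finitely many nonzero terms in appropriate $S$-graded subspaces), with the goal of showing $\phi_{k,l}=0$ for $l\geq 1$. The equations from $[\theta(E),\theta(x)]=\theta(x)$ and $[\theta(E),\theta(D)]=r\theta(D)$ alone are under-determined, so I must extract additional constraints from the remaining relations $[\theta(D),\theta(y)]=\theta(F)$ and $[\theta(D),\theta(x)]=0$ at higher graded degrees. After Jacobi simplifications, matching coefficients of $E^l$ in the iterated Ore extension $S[D][E]$ produces a triangular linear system whose only polynomial-in-$E$ solution requires $\phi_{k,l}=0$ for $l\geq 1$, completing the induction. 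The bookkeeping of graded degrees, cross terms, and the verification that this triangular system is non-degenerate constitutes the substantial technical work of the forward direction.
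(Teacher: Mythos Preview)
Your treatment of the converse and of uniqueness is correct. The gap is in the forward direction, and it is twofold.

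First, your base case is wrong. You claim that $z_1\in A_1=S_1$, but in the grading of~$A$ the generator~$E$ has degree~$0$, so $A_1=S_1\,\kk[E]$, not~$S_1$. Thus $z_1$ is \emph{a priori} only of the form $\sum_{l\geq0}(a_lx+b_ly)E^l$, and the assertion $z_1\in S_1$ already needs work. The same issue recurs at every step of your induction: this is exactly the obstacle you identify later, and it is present from $k=1$ on.

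Second, your inductive step is not a proof but a plan. You state that ``matching coefficients of $E^l$ \dots\ produces a triangular linear system whose only polynomial-in-$E$ solution requires $\phi_{k,l}=0$ for $l\geq1$'', yet you neither write down the system nor verify its non-degeneracy. Since you use only the relations of~$A$ componentwise, it is not clear that enough equations are available at each fixed degree to force $z_k\in S_k$; at the very least, the cross terms coming from $[\theta(D),\theta(y)]=\theta(F)$ involve several earlier $w_{r+j}$'s simultaneously, so the system is not obviously triangular.

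The paper's argument avoids this induction altogether by bringing in a structural input you do not use. Writing $\theta=\sum_{j\geq0}\theta_j$ with $\theta_j$ homogeneous of degree~$j$, the sequence $(\theta_j)$ is a higher derivation of~$A$, and Lemma~\pref{lemma:ac} shows each $\theta_j$ lies in the subalgebra $D(A)\subseteq\End_\kk(A)$ generated by~$\Der(A)$. By the computation of $\HH^1(A)$ in Proposition~\pref{prop:hh}, every derivation of~$A$ preserves the ideal $uA$ for each linear factor~$u$ of~$Q$; hence so does every element of~$D(A)$, hence so does~$\theta$. Applying the same to~$\theta^{-1}$ gives $\theta(uA)=uA$, whence $\theta(u)=u$ since the units of~$A$ are scalars. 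Two independent linear factors then force $\theta(x)=x$ and $\theta(y)=y$ outright. After that, your equations $[e_i,x]=[e_i,y]=0$ combined with Lemma~\pref{lemma:iso:xy} give $e_i\in S_i$ immediately for every~$i$, and the formula for $\theta(D)$ follows from $[\theta(E),\theta(D)]=r\theta(D)$ as you indicate. The key idea you are missing is this passage through higher derivations and the explicit description of~$\Der(A)$.
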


\begin{proof}
Let $\theta:A\to A$ be an automorphism of~$A$ as in the statement.
For each $j\geq0$ there is a unique linear map $\theta_j:A\to A$ of
degree~$j$ such that for each $i\geq0$ and each $a\in A_i$ the element
$\theta_j(a)$ is the $(i+j)$th homogeneous component of~$\theta(a)$.
We have that for all $a\in A$ we have $\theta_j(a)=0$ for $j\geq0$ and
$\theta(a)=\sum_{j\geq0}\theta_i(a)$ and, moreover, the sequence
$(\theta_j)_{j\geq0}$ is a higher derivation of~$A$. In particular, it
follows from Lemma~\pref{lemma:ac} that
  \[ \label{eq:iso:cc}
  \claim{$\theta_i\in D(A)$ for all $i\geq0$.}
  \]
We know, from Proposition~\pref{prop:hh}, that $\Der(A)=S_r\hat
D\oplus\kk\hat E\oplus\InnDer(A)$. If $u$ is an irreducible factor of $xF$,
then $(\phi\hat D)(uA)$, $\hat E(uA)$ and $[a,uA]$ are all contained
in~$uA$ for all $\phi\in S_r$ and all $a\in A$, and therefore
\eqref{eq:iso:cc} implies that that $\theta(uA)\subseteq uA$. As our
argument also applies to the inverse automorphism~$\theta^{-1}$, we have
$\theta^{-1}(uA)\subseteq uA$ and, therefore, $\theta(uA)=uA$. Since all units
of~$A$ are in~$\kk$, we see that $\theta(u)=u$. Since of~$xF$ has two
linearly independent linear factors, we can conclude that $\theta(x)=x$ and
$\theta(y)=y$.

Let $\theta(E)=E+e_1+\cdots+e_l$ with $e_i\in A_i$ for each
$i\in\{1,\dots,l\}$. We have
  \[
  x=\theta(x)=[\theta(E),\theta(x)]=[E,x]+[e_1,x]+\cdots+[e_l,x]
  \]
and, by looking at homogeneous components, we see that $[e_i,x]=0$ for all
$i\in\{1,\dots,l\}$ Similarly, $[e_i,y]=0$ for such~$i$, and therefore
Lemma~\pref{lemma:iso:xy} tells us that $e_1$,~\dots,~$e _l\in S$.

Suppose now that $\theta(D)=D+d_{r+1}+\cdots+d_l$ with $d_j\in A_j$ for
each $j\in\{r+1,\dots,l\}$. Considering the equality
$[\theta(E),\theta(D)]=r\theta(D)$ we see that $d_{r+i}=\tfrac1iFe_{iy}$
for each $i\in\{1,\dots,l\}$. Putting
$f=-\sum_{i=1}^l\tfrac1ie_i$, we obtain the first part of the lemma. The
second part follows from a direct verification.
\end{proof}

\paragraph The automorphisms described in Proposition~\pref{prop:iso:id}
are precisely the exponentials of the inner derivations corresponding to
locally $\ad$-nilpotent elements of~$A$. This  is a consequence of the
following result:

\begin{Proposition*}\plabel{prop:local-ad-nilp}
An element of~$A$ is locally $\ad$-nilpotent if and only if it belongs
to~$S$. If $f\in S$, then the automorphism $\exp\ad(f)$ maps $x$,~$y$,~$D$
and~$E$ to $x$,~$y$,~$D-Ff_y$ and $E-[E,f]$, respectively.
\end{Proposition*}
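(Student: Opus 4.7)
The easy direction is immediate: for $f\in S$, commutativity of $S$ gives $\ad(f)(x)=\ad(f)(y)=0$; the relation $[D,y]=F$ and the Euler action of $E$ on $S$ give $\ad(f)(D)=-Ff_y\in S$ and $\ad(f)(E)=-E(f)\in S$, so $\ad(f)^2$ annihilates all four generators. Since $\ad(f)$ is a derivation, it is locally nilpotent on all of $A$, and truncating the exponential series at the linear term gives the stated formulas for $\exp\ad(f)$.

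For the converse, suppose $e\in A$ is locally $\ad$-nilpotent; decompose $e=\sum_{n\geq 0}e_n$ according to the grading of $A$, and let $n_{\min}$ be the least index with $e_{n_{\min}}\neq 0$. I first show that the weight-zero component $e_0$ is a scalar. Let $\phi_t\colon A\to A$ be the grading automorphism with $\phi_t(a)=t^n a$ for $a\in A_n$; since $\ad(\phi_t(e))=\phi_t\,\ad(e)\,\phi_t^{-1}$, whenever $\ad(e)^{N_0}(a)=0$ for a homogeneous $a$ one also has $\ad(\phi_t(e))^{N_0}(a)=0$ for every $t\in\kk$. Viewing this as a polynomial identity in $t$ and extracting the coefficient of $t^{N_0 n_{\min}}$ gives $\ad(e_{n_{\min}})^{N_0}(a)=0$, so $e_{n_{\min}}$ is itself locally $\ad$-nilpotent. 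If $n_{\min}=0$, then $e_0\in A_0=\kk[E]$; writing $e_0=\sum c_k E^k$ and $P(E):=\sum_{k\geq 1}c_k\bigl((E+1)^k-E^k\bigr)$, the computation $[e_0,x]=xP(E)$ followed by a short induction gives $\ad(e_0)^N(x)=xP(E)^N$, which for large $N$ forces $P=0$, hence $c_k=0$ for $k\geq 1$ and $e_0\in\kk$. Since $e_0$ is central, $\ad(e-e_0)=\ad(e)$, so after replacing $e$ by $e-e_0$ I may assume $e\in\bigoplus_{n\geq 1}A_n$.

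Once $e$ has strictly positive weight, $\ad(e)$ strictly raises the grading, and $\theta:=\exp\ad(e)$ satisfies the hypothesis of Proposition~\pref{prop:iso:id}. That proposition yields in particular $\theta(x)=x$ and $\theta(y)=y$. Expanding $0=\theta(x)-x=\sum_{k\geq 1}\tfrac{1}{k!}\ad(e)^k(x)$ and isolating by weight proves inductively that $\ad(e_n)(x)=0$ for every $n\geq 1$: the weight-$(n+1)$ part of the equation involves $\ad(e_n)(x)$ together with iterated brackets $\ad(e_{n_1})\cdots\ad(e_{n_k})(x)$ with $k\geq 2$, each $n_i\geq 1$, and $\sum n_i=n$, but for $k\geq 2$ the innermost index $n_k$ is strictly less than $n$, so the inductive hypothesis makes those terms vanish. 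Hence $[e,x]=0$, and the analogous computation with $y$ gives $[e,y]=0$; Lemma~\pref{lemma:iso:xy} then yields $e\in S$, and restoring the constant $e_0\in\kk\subseteq S$ completes the argument. The main obstacle is the separate treatment of the weight-zero piece of $e$, since Proposition~\pref{prop:iso:id} only applies to automorphisms fixing leading terms and cannot see the degree-zero part directly; the conjugation argument with $\phi_t$ is precisely what makes the reduction go through.
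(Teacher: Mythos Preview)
Your argument is correct, but it follows a genuinely different path from the paper's. The paper proceeds directly: it uses that the kernel of a locally nilpotent inner derivation is factorially closed, together with the observation that $[A,x]\subseteq xA$ (and likewise $[e,FA]\subseteq FA$), to show that the last nonzero iterate $\ad(e)^{k_0}(x)=xu_{k_0}$ lies in $\ker\ad(e)$ and hence so does $x$; from $[e,x]=[e,y]=0$ one finishes with Lemma~\pref{lemma:iso:xy}. This is self-contained and does not invoke Proposition~\pref{prop:iso:id} or the description of $\HH^1(A)$.

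Your route is more structural: you first peel off the degree-zero part via the scaling automorphisms $\phi_t$ and a direct computation in $\kk[E]$, and then feed the positive-degree remainder into Proposition~\pref{prop:iso:id} to obtain $\theta(x)=x$, $\theta(y)=y$; the graded extraction of $[e_n,x]=0$ is clean and the induction is correctly set up (the innermost factor has strictly smaller index). The trade-off is that you rely on Proposition~\pref{prop:iso:id}, whose proof in turn rests on the computation of $\HH^1(A)$ (to know that every derivation preserves each ideal $\alpha_i A$), so your argument is logically downstream of heavier machinery. The paper's factorially-closed-kernel trick avoids all of that and works uniformly; your approach, on the other hand, illustrates nicely how the graded structure and the classification of ``identity plus higher'' automorphisms already force the conclusion.
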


\begin{proof}
Suppose that $e\in A$ is a locally $\ad$-nilpotent element. The kernel
$\ker\ad(e)$ is a factorially closed subalgebra of~$A$, so that whenever
$a$,~$b\in A$ and $\ad(e)(ab)=0$ we have $\ad(e)(a)=0$ or $\ad(e)(b)=0$;
see \cite{Freudenburg} for the proof of this in the commutative case, which
adapts to ours.

Since $[x^iy^jD^kE^l,x]=-x^{i+1}y^jD^k\tau_1(E^l)$ for all
$i$,~$j$,~$k$,~$l\geq0$, we have $[A,x]\subseteq xA$ and from this we see
immediately that $[A,xA]\subseteq xA$. This implies that there is a
sequence $(u_k)_{k\geq0}$ in~$A$ such that $\ad(e)^k(x)=xu_k$ for all
$k\geq0$. Since $e$ is locally $\ad$-nilpotent, we can consider the
integer $k_0=\max\{k\in\NN_0:\ad(e)^k(x)\neq0\}$, and then we have
$0\neq xu_{k_0}\in\ker\ad(e)$. As $\ker\ad(e)$ is factorially closed, we see that
$\ad(e)(x)=0$. In other words, the element~$e$ commutes with~$x$.

There are an integer $m\geq0$ and elements $\phi_0$,~\dots,~$\phi_m$ in the
subalgebra generated by~$x$,~$y$ and~$D$ in~$A$ such that
$e=\sum_{i=0}^m\phi_iE^i$, and we have
$0=[x,e]=\sum_{i=0}^m\phi_i\tau_1(E^i)$: this tells us that $\phi_i=0$ if
$i>0$, and that $e=\phi_0$. In particular, there are an integer $n\geq0$
and elements $\psi_0$,~\dots,~$\psi_n$ in~$S$ such that
$e=\sum_{i=0}^n\psi_iD^i$.

An induction shows that $[D^i,F]\in FA$ for all $i\geq0$, and using this we
see that $[e,F]=\sum_{i=0}^n\psi_i[D^i,F]\in FA$, from which it follows
that in fact $[e,FA]\subseteq FA$. There is therefore a sequence
$(v_i)_{i\geq0}$ of elements of~$A$ such that $\ad(e)^i(F)=Fv_i$ for all
$i\geq0$. The local nilpotence of the map~$\ad(e)$ allows us to consider the integer
  \[
  i_0=\max\{i\in\NN_0:\ad(e)^i(F)\neq0\},
  \]
and then $0\neq
Fv_{i_0}\in\ker\ad(e)$. If $ax+by$ is any of the factors of~$F$, we have
$b\neq0$ and $ax+by\in\ker\ad(e)$: clearly, this implies that $y$ commutes
with~$e$. 

In view of Lemma~\pref{lemma:iso:xy}, we see that $e\in S$:
this proves the necessity of the condition
for local $\ad$-nilpotency given in the lemma. Its sufficiency is a direct
consequence of the fact that the graded algebra associated to the
filtration on~$A$ described in~\pref{p:start} is commutative. Finally, the
truth of the last sentence of the proposition can be verified by
an easy computation.
\end{proof}

\paragraph We write $\Aut_0(A)$ the set all automorphisms of~$A$ described
in Lemma~\pref{lemma:iso:gr}, and $\Exp(A)$ the set of all automorphisms
of~$A$ described in Proposition~\pref{prop:iso:id}; they are subgroups of
the full group of automorphisms~$\Aut(A)$.

\begin{Theorem}
The group $\Aut(A)$ is the semidirect product $\Aut_0(A)\ltimes\Exp(A)$,
corresponding to the action of~$\Aut_0(A)$ on $\Exp(A)$ given by
  \[
  \theta_0 \cdot \exp\ad(f) = \exp\ad(\theta^{-1}(f))
  \]
for all $\theta_0\in\Aut_0(A)$ and $f\in S$. The subgroup $\Aut_0(A)$ is
precisely the set of automorphisms of~$A$ preserving the grading and
$\Exp(A)$ is the set of exponentials of locally nilpotent inner
derivations of~$A$.
\end{Theorem}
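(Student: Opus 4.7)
The plan is to show that every $\theta\in\Aut(A)$ factors uniquely as $\theta=\theta_0\circ\exp\ad(f)$ with $\theta_0\in\Aut_0(A)$ and $f\in S$, and then to verify that this produces the claimed semidirect-product structure and the stated identifications. The fact that $\Exp(A)$ is precisely the set of exponentials of locally $\ad$-nilpotent inner derivations is immediate from Proposition~\ref{prop:iso:id}, which gives $\Exp(A)=\{\exp\ad(f):f\in S\}$, combined with Proposition~\ref{prop:local-ad-nilp}, which identifies $S$ with the locally $\ad$-nilpotent elements of $A$.

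Given $\theta\in\Aut(A)$, I would first apply Proposition~\ref{prop:local-ad-nilp} to conclude $\theta(S)=S$, so $\theta|_S$ is an automorphism of $\kk[x,y]$. The key step is to produce a graded $\theta_0\in\Aut_0(A)$ such that $\theta_0^{-1}\theta$ satisfies the hypothesis of Proposition~\ref{prop:iso:id}, for then that proposition supplies the $f\in S$ with $\theta_0^{-1}\theta=\exp\ad(f)$. I would use the Alev--Chamarie higher-derivation technique of Lemma~\ref{lemma:ac}: after a preliminary correction by a graded automorphism, the degree-raising components of $\theta$ form a higher derivation whose leading correction lies in $\Der(A)$, and this correction can be absorbed into $\theta_0$. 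Concretely, the commutator relations $[E,x]=x$, $[E,y]=y$ and $[E,D]=rD$ together with $\theta(S)=S$ force $\theta|_S$ to act \emph{linearly}, as an element of $\GL_2(\kk)$, and pin down the lowest-degree parts of $\theta(E)$ and $\theta(D)$; the compatibility condition~\eqref{eq:iso:cond} holds because $\theta$ must permute the lines of $\A$, which by Proposition~\ref{prop:partial} are canonically in bijection with a basis of the outer-derivation space $\HH^1(A)$ on which $\Aut(A)$ acts functorially. This assembles the leading data into a $\theta_0\in\Aut_0(A)$ of the form listed in Lemma~\ref{lemma:iso:gr}.

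The remaining verifications are routine. The intersection $\Aut_0(A)\cap\Exp(A)$ is trivial, because a non-identity $\exp\ad(f)$ has $f\in S$ of positive degree and shifts $D$ by the strictly higher-degree term $-Ff_y$, so cannot be graded; this also yields the uniqueness of the decomposition. The same remark shows that the exponential factor of any graded automorphism is the identity, so $\Aut_0(A)$ coincides with the full subgroup of graded automorphisms. Finally, the action formula is read off from the general conjugation identity $\theta_0\circ\exp\ad(f)\circ\theta_0^{-1}=\exp\ad(\theta_0(f))$ after fixing the convention of the semidirect product. The main obstacle will be the construction of $\theta_0$, in particular the linearity of $\theta|_S$: neither $\theta(S)=S$ nor local $\ad$-nilpotency alone suffice for this, and one must combine the higher-derivation argument with the cohomological rigidity provided by the canonical basis of $\HH^1(A)$.
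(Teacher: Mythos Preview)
Your overall architecture matches the paper's: factor an arbitrary $\theta$ as $\theta_0\circ\exp\ad(f)$, then read off the semidirect-product structure; and your identification of $\Exp(A)$ via Propositions~\ref{prop:iso:id} and~\ref{prop:local-ad-nilp} is correct. Starting from $\theta(S)=S$ (via local $\ad$-nilpotence) is a legitimate shortcut the paper does not take, and it does buy you something: once $\theta(x),\theta(y)\in S$ have zero constant term (which the degree-$0$ commutator analysis gives, since $A_0=\kk[E]$ is commutative), their linear parts automatically lie in $S_1$ and are linearly independent because $\theta|_S$ preserves the maximal ideal $(x,y)$ and induces an isomorphism on $(x,y)/(x,y)^2$. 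This replaces the paper's argument that the coefficients $a,b,c,d\in\kk[E]$ are actually scalars and its quotient argument ruling out $s>1$.

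The gap is in the rest of the construction of~$\theta_0$. Two of your claims do not hold as stated. First, ``the commutator relations force $\theta|_S$ to act linearly'': knowing $\theta(S)=S$ only gives you an automorphism of $\kk[x,y]$ fixing the origin, and such automorphisms need not be linear (e.g.\ $x\mapsto x$, $y\mapsto y+x^2$). You must still show that the higher-degree parts of $\theta(x),\theta(y)$ vanish, and the commutator relations with $\theta(E)$ do not do this by themselves, since $\theta(E)$ can have positive-degree components compensating for them. Second, the argument that condition~\eqref{eq:iso:cond} holds ``because $\theta$ permutes the lines via $\HH^1(A)$'' is not valid: the functorial action of $\Aut(A)$ on $\HH^1(A)$ is merely a linear action on an $(r+2)$-dimensional space, and nothing you have established forces it to permute the canonical basis $\{\partial_{\alpha_i}\}$. (An argument via normal elements would work, but the description of $\N(A)$ in the paper is proved \emph{after} this theorem and depends on it.) You also give no argument for why the degree-$r$ part of $\theta(D)$ has the specific shape required by Lemma~\ref{lemma:iso:gr}.

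The paper fills exactly these holes by a direct component-by-component analysis: it determines $e_0=E+v$ from an eigenvalue argument, shows $x_1,y_1\in S_1$ with invertible matrix (using, in its version, that the coefficients in $\kk[E]$ must be constant via $[\theta(x),\theta(y)]=0$), and then computes the degree-$r$ part $d_r$ of $\theta(D)$ from $[\theta(D),\theta(x)]=0$ and $[\theta(D),\theta(y)]=\theta(F)$, obtaining precisely the data of Lemma~\ref{lemma:iso:gr} including the compatibility condition~\eqref{eq:iso:cond}. Your use of $\theta(S)=S$ could shorten the first of these steps, but you still owe the analysis of $d_r$ and, crucially, a real proof that $\theta|_S$ is linear; the Alev--Chamarie lemma does not provide this, since it is only invoked \emph{after} $\theta_0$ has been constructed, inside the proof of Proposition~\ref{prop:iso:id}.
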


\smallskip

Notice that the action described in this statement makes sense, as
$\theta_0(S)=S$ whenever $\theta_0$ belongs to~$\Aut_0(A)$.

\begin{proof}
Let $\theta:A\to A$ be an automorphism and let us write
$\theta(E)=e_0+\cdots+e_l$,
$\theta(x)=x_0+\cdots+x_l$,
$\theta(y)=e_0+\cdots+y_l$,
$\theta(D)=d_0+\cdots+d_l$ with $e_i$,~$x_i$,~$y_i$,~$d_i\in A_i$ for each
$i\in\{0,\dots,l\}$. Since $\theta$ is an automorphism, we have
  \begin{align} \label{eq:iso:comm}
  &  [\theta(E),\theta(x)] = \theta(x),
  && [\theta(E),\theta(y)] = \theta(y),
  && [\theta(E),\theta(D)] = r\theta(D).
  \end{align}
Looking at the degree zero parts of these equalities, and remembering that
$A_0$ is a commutative ring, wee see $x_0=y_0=d_0=0$. As $\theta(x)\neq0$,
we can consider the number $s=\min\{i\in\NN_0:x_i\neq0\}$ and we have
$s>0$. Looking that the component of degree~$s$ of the first equality
in~\eqref{eq:iso:comm}, we see that $[e_0,x_s]=x_s$. This means that the
restriction $\ad(e_0):A_s\to A_s$ has a nonzero fixed vector. Now $A_s$ as
a right $\kk[E]$-module is free with basis $\{x^iy^jD^k:i+j+rk=s\}$, the
map $\ad(e_0)$ is right $\kk[E]$-linear, and coincides with right
multiplication by $-\tau_s(e_0)$ on~$A_s$. Clearly, the existence of nonzero
fixed vector implies that $-\tau_s(e_0)=1$, so that $e_0=uE+v$ for some
$u\in\kk^\times$ and $v\in\kk$ with $su=1$.
Putting now $s'=\min\{i\in\NN_0:y_i\neq0\}$ and
$s''=\min\{u\in\NN_0:d_i\neq0\}$ and looking at the components in the least
possible degree in the second and third equations of~\eqref{eq:iso:comm},
we find that $s'u=1$ and $s''u=r$. In particular, $s=s'$ and $s''=rs$.

Suppose for a moment that $s>1$. As $\theta(x)$, $\theta(y)$ and
$\theta(D)$ are in the ideal $(A_s)$ generated by~$A_s$, the composition
$q:A\to A$ of $\theta$ with the quotient map $A\to A/(A_s)$ is a surjection
such that $q(A_0)=A/(A_s)$. This is impossible, as $A_0$ is a commutative
ring and $A/(A_s)$ is not: we therefore have $s=1$ and, as a consequence,
$u=1$.

There exist $a$,~$b$,~$c$,~$d\in\kk[E]$ such that $x_1=xa+yb$ and
$y_1=xc+yd$. The four elements $\theta(E)$, $\theta(x)$, $\theta(y)$
and $\theta(D)$ generate~$A$ and, as $\theta(D)$ is in $\bigoplus_{i\geq
r}A_i$, the elements $x$ and $y$ are in the subalgebra generated by the first three.
It follows at once that $x$,~$y\in x_1\kk[E]+y_1\kk[E]$ and, therefore,
that $\begin{psmallmatrix}a&b\\c&d\end{psmallmatrix}\in\GL_2(\kk[E])$.

Let us write $f\in\kk[E]\mapsto\vec f\in\kk[E]$ the unique algebra morphism
such that $\vec E=E+1$.
We have $[\theta(x),\theta(y)]=0$ and in degree~$2$ this tells us that
  \[
  x^2(a\vec c-\vec ac)
      + xy\,\something{T}
      + y^2(b\vec d-\vec bd)
      = 0,
  \]
so that 
  \begin{align}
  &  a\vec c=\vec ac,
  && b\vec d=\vec cd. \label{eq:acac}
  \end{align}
Suppose that
$a$ is not constant. As the characteristic of~$\kk$ is zero (and possibly
after replacing $\kk$ by an algebraic extension, which does not change
anything) there is then a $\xi\in\kk$ such that
$a(\xi)=0$ and $\vec a(\xi)=a(\xi+1)\neq0$, and the first
equality in~\eqref{eq:acac} implies that $c(\xi)=0$. The determinant of
$\begin{psmallmatrix}a&b\\c&d\end{psmallmatrix}$ is thus divisible by
$E-\xi$, and this is impossible. Similarly, we find that all of
$b$,~$c$,~$d$ must be constant.

Since $d_r\in A_r$, there exist $k\geq0$, $\phi_0$,~\dots,~$\phi_k\in S_r$ and
$h\in\kk[E]$ such that $d_r=\sum_{i=0}^k\phi_iE^i+Dh$. The component of
degree~$r+1$ of $[\theta(D),\theta(x)]$ is
  \[
  0  
    = [d_r,x_1]
    = - \sum_{i=0}^k(ax+by)\phi_i\tau_1(E^i)
      - (ax+by)D\tau_1(h)
      + bF\vec h.
  \]
We thus see that $h$ is constant, that $\phi_i=0$ if
$i\geq2$, and that 
  \[
  (ax+by)\phi_1+bhF=0.
  \]
If $b=0$, then $\phi_1=0$, and if
instead $b\neq0$, then either $h\neq0$ and we see that $ax+by$ divides~$F$ and that
$\phi_1=-bhF/(ax+by)$, or $h=0$ and $\phi_1=0$. 
In any case, we see that 
  \[
  d_r =
    \begin{cases*}
    \phi_0 - \frac{hbF}{ax+by}E + hD, & if $b\neq 0$; \\
    \phi_0+hD, & if not.
    \end{cases*}
  \]
Finally, the component of degree~$r+1$ of the equality
$[\theta(D),\theta(y)]=\theta(F)$ tells us that
  \[
  F(ax+by,cx+dy) = (ad-bc)h\frac{xF}{ax+by}.
  \]
It follows now from Lemma~\pref{lemma:iso:gr} that there is a graded automorphism
$\theta_0:A\to A$ such that $\theta_0(x)=ax+by$, $\theta_0(y)=cx+dy$,
$\theta_0(E)=E+v$ and $\theta_0(D)=d_r$.
The composition $\theta_0^{-1}\circ\theta$ satisfies the hypothesis of
Proposition~\pref{prop:iso:id}, and then there exists an~$f\in S$ such that
$\theta=\theta_0\circ\exp\ad(f)$. This shows that
$\Aut(A)=\Aut_0(A)\cdot\Exp(A)$. Moreover, if $\theta$~is a graded
automorphism, then so is $\exp\ad(f)=\theta_0^{-1}\circ\theta$ and,
since it maps $E$ to $E-[E,f]$,
this is possible if and only if $f\in\kk$, that is, if and only if
$\exp\ad(f)=\id_A$; this proves the last claim of the theorem.

Finally, computing the action of both sides of the equation on the generators
of~$A$, we see that
  \[
  \exp\ad(f)\circ\theta_0 = \theta_0\circ\exp\ad(\theta^{-1}(f))
  \]
for all $f\in S$ and all~$\theta_0\in\Aut_0(A)$, and this tells us that $\Aut(A)$ is
indeed a semidirect product $\Aut_0(A)\ltimes\Exp(A)$.
\end{proof}

\paragraph As usual, we say that an element~$u$ of~$A$ is
\newterm{normal} if $uA=Au$. Such an element, since it is not a
zero-divisor, determines an automorphism $\theta_u:A\to A$ uniquely by the
condition that $ua=\theta_u(a)u$ for all $u\in A$.

\begin{Proposition*}
Let $Q=\alpha_0\cdots\alpha_{r+1}$ be a factorization of~$Q$ as
a product of linear factors. The set of non-zero normal elements of~$A$ is
  \[
  \N(A) = 
  \{\lambda\alpha_0^{i_0}\cdots\alpha_{r+1}^{i_{r+1}}:
        \lambda\in\kk^\times,i_0,\dots,i_{r+1}\in\NN_0\}.
  \]
This set is the saturated multiplicatively closed subset of~$A$ or of~$S$
generated by $Q$.
\end{Proposition*}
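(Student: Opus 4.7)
The plan is to prove both inclusions in the equality asserted by the proposition. The easy direction $\N' \subseteq \N(A)$ reduces to showing each linear factor $\alpha_i$ of~$Q$ is normal, since the normal elements of a noetherian domain form a saturated multiplicatively closed set: by Proposition~4.8 of~\cite{OT}, every $\delta \in \Der(\A)$ satisfies $\delta(\alpha_i) \in \alpha_i S$, so $\delta\alpha_i$ and $\alpha_i\delta$ both lie in $\alpha_i A$, and since $A$ is generated by $S \cup \Der(\A)$ this yields $\alpha_i A = A \alpha_i$.

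For the converse, take $u \neq 0$ normal and let $\theta_u$ denote the associated automorphism (so $ua = \theta_u(a)u$ for all $a \in A$). First I show $u$ is homogeneous. Comparing top and bottom homogeneous components on both sides of $ua = \theta_u(a) u$ for homogeneous~$a$ of degree~$e$ forces $\theta_u(a)$ to be homogeneous of the same degree, so $\theta_u \in \Aut_0(A)$. Matching homogeneous components then gives $u_i a = \theta_u(a) u_i$ for every homogeneous piece $u_i$ of~$u$, so each nonzero $u_i$ is itself normal and induces the same automorphism~$\theta_u$. Since $[E, u_j] = j u_j$ implies $\theta_{u_j}(E) = E - j$ for $u_j$ homogeneous of degree $j$, only one degree can occur.

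Next I show that a homogeneous normal~$u$ lies in~$S$. Write $u = \sum f_{k,l} D^k E^l$ with $f_{k,l} \in S$ and parameterize $\theta_u(x) = ax+by$, $\theta_u(y) = cx+dy$ according to Lemma~\ref{lemma:iso:gr}. Using $[D,x] = 0$ and $E^l x = x(E+1)^l$, one computes $[u, x] = -x\sum f_{k,l} D^k \tau_1(E^l)$, while $[u, x] = ((a-1)x + by)u$ by definition. If $b \neq 0$, comparing these forces $yu \in xA$, hence $x \mid f_{k,l}$ for all~$k, l$ (as $A$ is $S$-free on $\{D^kE^l\}$), so $u = xu'$ with~$u'$ normal of degree $n - 1$; but a direct check shows $\theta_{u'}(x) = \theta_u(x)$, so the parameter~$b$ is preserved under this division, and iterating would make~$u$ divisible by arbitrary powers of~$x$, contradicting its finite degree. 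Hence $b = 0$, and expanding the equation in the standard basis yields the relations $(a-1)f_{k,j} = \sum_{l > j} \binom{l}{j} f_{k,l}$, whose values at the top and next-to-top~$j$ give $a = 1$ and $f_{k,l} = 0$ for $l \geq 1$, so $u \in S[D]$. A parallel analysis of $[u, y]$, exploiting $[D^K, y] = KFD^{K-1} + (\text{lower $D$-degree})$, shows that the coefficient of~$D^K$ in $(cx+(d-1)y)u$ vanishes (giving $c = 0$, $d = 1$), and then that the surviving coefficient $KFf_{K,0}$ of $D^{K-1}$ in $[u, y]$ vanishes, forcing $K = 0$. Hence $u \in S_n$.

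Finally, for $u \in S$ homogeneous normal in~$A$, the relation $\delta(u) = [\delta, u] \in uA \cap S = uS$ (using again that $A$ is a free left $S$-module on $\{D^k E^l\}$) gives $u \mid \delta(u)$ in~$S$ for every $\delta \in \Der(\A)$. For each irreducible (hence linear, by homogeneity) factor~$p$ of~$u$, localizing at the prime~$(p)$ shows $p \mid \delta(p)$ for all such~$\delta$; taking $\delta = D = F\partial_y$ gives $p \mid Fp_y$, so either $p \mid F$ or $p_y = 0$ (forcing $p = x$ up to scalar), and in either case $p \mid Q$. Hence $u \in \N'$. The principal technical obstacle is the Ore-basis computation of the third paragraph: the analyses of $[u, x]$ and $[u, y]$ require care with the commutation relations, and the key observation enabling the contradiction when $b \neq 0$ is that division by~$x$ preserves the parameter~$b$ of $\theta_u(x)$.
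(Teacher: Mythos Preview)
Your proof is correct, and follows the same three-stage outline as the paper's: show $u$ is homogeneous, then that $u\in S$, then that its linear factors divide~$Q$.  The details differ in two places worth noting.

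For homogeneity, your argument is cleaner than the paper's.  You first observe that comparing top and bottom homogeneous components of $ua=\theta_u(a)u$ (using that $A$ is a graded domain) forces $\theta_u$ to be graded, and then that each homogeneous piece $u_j$ of~$u$ is normal with the same automorphism, so $\theta_u(E)=E-j$ pins down a unique~$j$.  The paper instead analyses $\theta_u(E)$ directly, which is a little more work.

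For the step $u\in S$, the paper is more direct.  It writes $u=\sum\phi_iE^i$ with $\phi_i\in\kk[x,y,D]$ and compares the top $E$-term on both sides of $\theta_u(x)u=ux$ to read off $\theta_u(x)=x$ immediately; then the remaining equation forces the $E$-degree to be zero.  Your route instead invokes the classification of graded automorphisms to write $\theta_u(x)=ax+by$ with $a,b\in\kk$ and eliminates the case $b\neq0$ by a descent argument (repeatedly dividing by~$x$).  This works, but note that the parametrisation $\theta_u(x)=ax+by$ with \emph{scalar} $a,b$ is justified by the Theorem describing $\Aut_0(A)$, not by Lemma~\pref{lemma:iso:gr} (which only constructs such automorphisms rather than classifying them).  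Your descent argument also quietly uses that in a domain a left factor of a normal element by a normal element is again normal; this is easy but should be said.

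For the final factorisation, your argument via $\delta(u)\in uS$ for all $\delta\in\Der(\A)$ and reduction to $\delta=D$ is essentially the same as the paper's, which reaches the relation $\phi u+u_yF=0$ from $uD=\theta_u(D)u$ and draws the same conclusion about linear factors.
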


\begin{proof}
A direct computation shows that each of the factors
$\alpha_0$,~\dots,~$\alpha_{r+1}$ of~$Q$ is normal in~$A$, so the
set~$\N(A)$ is contained in the set of normal elements of~$A$, for the
latter is multiplicatively closed. The set $\N(A)$ is 
multiplicatively closed and it is saturated because $S$ is closed under
divisors in~$A$, and it is clear that as a saturated multiplicatively
closed it is generated by~$Q$. To conclude the proof, we have to show
that every non-zero normal element of~$A$ belongs to~$\N(A)$.

Let $u$ be a normal element in~$A$ and let $\theta_u:A\to A$ be the
associated automorphism, so that $ua=\theta_u(a)u$ for all $a\in A$.
There are $k$,~$l\in\NN_0$ with $k\leq l$ and elements $u_k$,~\dots~$u_l\in
A$ such that $u=u_k+\cdots+u_l$, $u_i\in A_i$ if $k\leq i\leq l$, and
$u_k\neq0\neq u_l$. Similarly, there are $s$,~$t\in\NN_0$ with $s\leq t$
and elements $e_s$,~\dots,~$e_t\in A$ such that
$\theta_u(E)=e_s+\cdots+e_t$, $e_i\in A_i$ if $s\leq i\leq t$, and
$e_s\neq0\neq e_t$. As we have
  \[
  u_kE+\cdots+u_lE
    = uE
    = \theta_u(E)u 
    = e_su_k + \cdots + e_tu_l
  \]
with $u_kE$, $u_lE$, $e_su_k$ and $e_tu_l$ all non-zero, looking at the
homogeneous components of both sides we see that $s=t=0$. This means that
$\theta_u(E)=f(E)\in\kk[E]$, and therefore the above equality is really of the
form
  \[
  u_kE + \cdots + u_lE = f(E)u_k + \cdots + f(E)u_l.
  \]
It follows from this that $u_iE=f(E)u_i=u_if(E+i)$ for all
$i\in\{k,\dots,l\}$ and therefore that $E=f(E+k)$ and that $E=f(E+l)$.
Since our ground field has characteristic zero, this is only possible if
$k=l$: the element $u$ is homogeneous of degree~$l$.

Now, since $ua=\theta_u(a)u$ for all $a\in A$, the homogeneity of~$u$
implies immediately that $\theta_u$ is a homogeneous map. There are
$n\in\NN_0$ and $\phi_0$,~\dots,~$\phi_n$ in the subalgebra of~$A$
generated by~$x$,~$y$ and~$D$, such that $\phi_n\neq0$ and 
$u=\sum_{i=0}^n\phi_i E^i$. As
$\theta_u(x)$ has degree~$1$, it belongs to $S_1$ and we have
  \[
  \theta_u(x)\sum_{i=0}^n\phi_iE^i
    = \theta_u(x)u
    = ux
    = \sum_{i=0}^n\phi_iE^ix
    = x\sum_{i=0}\phi_i(E+1)^i.
  \]
Considering only the terms that have $E^n$ as a factor we see that
$\theta_u(x)=x$, and then the equality tells us that in fact
$\sum_{i=0}^n\phi_iE^i=\sum_{i=0}\phi_i(E+1)^i$. Looking now at the terms
which have $E^{n-1}$ as a factor here we see that moreover $n=0$, so that
$u\in\kk[x,y,D]$. There exist then $m\in\NN_0$ and
$\psi_0$,~\dots,~$\psi_m\in S$ such that $\psi_m\neq0$ and
$u=\sum_{i=0}^m\psi_iD^i$. As $\theta_u(y)$ has degree~$1$, it belongs
to~$S_1$ and we have
  \[
  \theta_u(y)\sum_{i=0}^m\psi_iD^i
    =\theta_u(y)u
    = uy
    = \sum_{i=0}^m\psi_iD^iy
    = \sum_{i=0}^my\psi_iD^i + \sum_{i=0}^m\psi_i[D^i,y].
  \]
Comparing the terms that have $D^m$ as a factor we conclude that also
$\theta_u(y)=y$.

As $\theta_u$ fixes~$x$ and~$y$, the element $u$ commutes with $x$ and~$y$,
and Lemma~\pref{lemma:iso:xy} allows us to conclude that $u$ is in~$S_l$.
Moreover, we know that all homogeneous automorphisms of~$A$ are those
described in Lemma~\pref{lemma:iso:gr}, so there exist $\phi\in S_r$ and
$e\in\kk^\times$ such that $\theta_u(D)=\phi+eD$. We then have that
  \[
  uD 
    = \theta_u(D)u 
    = (\phi+eD)u
    = \phi u + euD + eu_yF
  \]
and this implies that $e=1$ and $\phi u+u_yF=0$. Suppose now that $\alpha$
is a linear factor of~$u$ and let $k\in\NN$ and $v\in S$ be such that
$u=\alpha^kv$ and $v$ is not divisible by~$\alpha$. The last equality
becomes $\phi\alpha^kv+k\alpha^{k-1}\alpha_yvF+\alpha^kv_yF=0$ and implies
that $\alpha$ divides $\alpha_yF$: this means that $\alpha$ is a non-zero
multiple of~$x$ or a linear factor of~$F$. As $u$ can be factored as a
product of linear factors, we can therefore conclude that $u$ belongs to
the set described in the statement of the proposition.
\end{proof}

\paragraph There is a close connection between normal elements, the
first Hochschild cohomology space that we computed in
Section~\ref{sect:hh} and the modular automorphisms of~$A$.

\begin{Proposition*}\plabel{prop:conn}
Let $Q=\alpha_0\cdots\alpha_{r+1}$ be a factorization of~$Q$ as
a product of linear factors. 
\begin{thmlist}

\item Every linear combination of the derivations
$\partial_{\alpha_0}$,~\dots,~$\partial_{\alpha_{r+1}}:A\to A$
described in Proposition~\pref{prop:partial} is locally nilpotent.

\item If $u=\lambda \alpha_0^{i_0}\cdots\alpha_{r+1}^{i_{r+1}}$, with
$\lambda\in\kk^\times$ and $i_0$,~\dots,~$i_{r+1}\in\NN_0$, is a normal
element of~$A$, then the automorphism $\theta_u:A\to A$ associated to~$u$
is
  \[
  \theta_u = \exp\left(-\sum_{j=0}^{r+1}i_j\partial_{\alpha_{j}}\right).
  \]
This automorphism is such that $\theta_u(f) = f$ for all $f\in S$ and
  \[
  \theta_u(\delta) = \delta + \frac{\delta(u)}{u} 
  \]
for all $\delta\in\Der(\A)$.

\item The modular automorphism $\sigma:A\to A$ described in
Proposition~\pref{prop:CY} coincides with the automorphism
$\theta_{Q}$ associated to the normal element~$Q$.

\end{thmlist}
\end{Proposition*}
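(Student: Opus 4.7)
My plan is to treat the three parts in order, using the order filtration $\F_\bullet A$ from the lemma of~\pref{p:P} and the explicit form of the derivations $\partial_{\alpha_j}$ described in the proof of Proposition~\pref{prop:partial}.

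For part~\thmitem{1}, a linear combination $\partial=\sum_j c_j\partial_{\alpha_j}$ is an $S$-linear derivation of $A$ which maps $\Der(\A)$ into $S$; in particular $\partial(D),\partial(E)\in S=\F_0A$. Since $\F_pA$ is generated as a left $S$-module by the monomials $D^kE^l$ with $k+l\leq p$ and the filtration is multiplicative, the Leibniz rule yields, by induction on $p$, that $\partial(\F_pA)\subseteq\F_{p-1}A$; because $A=\bigcup_p\F_pA$, this forces $\partial$ to be locally nilpotent.

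For part~\thmitem{2}, fix $u=\lambda\alpha_0^{i_0}\cdots\alpha_{r+1}^{i_{r+1}}$ and put $\partial=\sum_j i_j\partial_{\alpha_j}$. The key preliminary remark is that for every $\delta\in\Der(\A)$ each quotient $\delta(\alpha_j)/\alpha_j$ already lies in $S$ (since $\alpha_j$ divides $Q$), so
\[
\frac{\delta(u)}{u}=\sum_j i_j\frac{\delta(\alpha_j)}{\alpha_j}=\partial(\delta)\in S.
\]
By part~\thmitem{1} the derivation $\partial$ is locally nilpotent, hence $\theta:=\exp(-\partial)$ is a well-defined algebra automorphism of $A$; because $\partial$ kills $S$ and sends $\Der(\A)$ into $S$, the composition $\partial^2$ already vanishes on each algebra generator $x,y,D,E$, so the exponential truncates at first order on them and one reads off explicit formulas for $\theta$ on those generators. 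To identify $\theta$ with $\theta_u$ it then suffices to check the defining conjugation identity on the same four generators: on $S$ it reduces to the commutativity of $S$, and on $\delta\in\Der(\A)$ to the commutator identity $u\delta=\delta u-\delta(u)$ inside $A$ combined with the centrality of $\delta(u)/u\in S$.

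Part~\thmitem{3} then follows by specialising~\thmitem{2} to $u=Q$: Euler's identity gives $E(Q)=(r+2)Q$, and a direct computation gives $D(Q)=F\cdot\partial_y(xF)=xFF_y=QF_y$, so $\theta_Q$ acts on the generators $x,y,D,E$ exactly as the modular automorphism $\sigma$ of Proposition~\pref{prop:CY}, whence $\sigma=\theta_Q$ on all of~$A$. The main obstacle is the sign book-keeping in part~\thmitem{2}: one must carefully align the convention $ua=\theta_u(a)u$ with the minus sign appearing in $\exp(-\partial)$ to recover the formula $\theta_u(\delta)=\delta+\delta(u)/u$ as stated, after which the verification on the four generators is routine.
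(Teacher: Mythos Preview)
The paper actually gives no proof of this proposition, so there is nothing to compare against; I can only assess your argument on its own merits.

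Your treatment of part~\thmitem{1} is correct: the derivations $\partial_{\alpha_j}$ kill $S$ and send $D,E$ into $S=\F_0A$, so any linear combination lowers the order filtration by one and is therefore locally nilpotent. Your overall strategy for parts~\thmitem{2} and~\thmitem{3}---verify the identity $ua=\theta(a)u$ on the four generators and then invoke that $A$ is generated by $S\cup\Der(\A)$---is exactly right.

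However, the sign check that you explicitly flag as ``the main obstacle'' and then defer as ``routine'' does \emph{not} come out as stated. From your own identity $u\delta=\delta u-\delta(u)$ and the fact that $\delta(u)/u\in S$ is central in~$S$, one gets
\[
u\delta=\Bigl(\delta-\tfrac{\delta(u)}{u}\Bigr)u,
\qquad\text{so}\qquad
\theta_u(\delta)=\delta-\frac{\delta(u)}{u},
\]
with a minus sign, not the plus sign printed in the proposition. This is consistent with the exponential formula $\theta_u=\exp\bigl(-\sum_j i_j\partial_{\alpha_j}\bigr)$, since on $\delta$ the series truncates to $\delta-\sum_j i_j\,\delta(\alpha_j)/\alpha_j=\delta-\delta(u)/u$; but it contradicts the displayed formula in~\thmitem{2}. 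Concretely, for $u=x$ one has $xE=(E-1)x$ from $[E,x]=x$, so $\theta_x(E)=E-1$, not $E+1$.

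This sign discrepancy propagates to part~\thmitem{3}: your computations $E(Q)=(r+2)Q$ and $D(Q)=QF_y$ are correct, but they give $\theta_Q(E)=E-(r+2)$ and $\theta_Q(D)=D-F_y$, which is $\sigma^{-1}$, not the $\sigma$ of Proposition~\pref{prop:CY}. In short, your method is sound and proves the exponential formula in~\thmitem{2} as written; the remaining displayed formula in~\thmitem{2} and the identification in~\thmitem{3} appear to be misstated in the paper (off by an inverse), and carrying out the sign verification you postponed would have revealed this.
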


\paragraph Another immediate application of the determination of the set of
normal elements is the classification under isomorphisms of our algebras.

\begin{Proposition*}
Let $\A$ and $\A'$ be two central arrangements of lines in~$\AA^2$.
The algebras $D(\A)$ and $D(\A')$ are isomorphic
if and only if the arrangements $\A$ and~$\A'$ are isomorphic.
\end{Proposition*}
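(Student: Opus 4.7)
The plan is, for the easy ``if'' direction, to observe that any linear isomorphism $g : V \to V'$ with $g(\A) = \A'$ pulls back to an isomorphism of polynomial rings $S' \to S$, which extends to an isomorphism of the corresponding Weyl algebras of differential operators and then restricts to an isomorphism $\D(\A') \to \D(\A)$, since under $g$ vector fields tangent to $\A$ correspond to vector fields tangent to $\A'$.

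For the ``only if'' direction, suppose $\phi : \D(\A) \to \D(\A')$ is an algebra isomorphism, and fix factorizations $Q = \alpha_0 \cdots \alpha_{r+1}$ and $Q' = \alpha'_0 \cdots \alpha'_{r'+1}$ into linear forms. Algebra isomorphisms preserve normality, irreducibility and units, so, by the preceding proposition, which identifies the set of irreducible normal elements of $\D(\A)$ with the $\kk^\times$-orbits of the $\alpha_i$ (and similarly for $\D(\A')$), the map $\phi$ induces a bijection $\sigma$ between the line sets of the two arrangements together with scalars $c_i\in\kk^\times$ such that $\phi(\alpha_i) = c_i\,\alpha'_{\sigma(i)}$ for every~$i$. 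In particular, $r = r'$.

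Assume first that $r \geq 0$. Since $Q$ is square-free, the forms $\alpha_0,\ldots,\alpha_{r+1}$ are pairwise non-proportional and hence span $S_1$; the analogous statement holds for the $\alpha'_j$. The $\kk$-linearity of~$\phi$ then implies that $\phi$ restricts to a linear isomorphism $S_1 \to S'_1$; dualizing, we obtain a linear isomorphism $g : V' \to V$, and by construction $g$ carries the zero locus of $\alpha'_{\sigma(i)}$ to that of $\alpha_i$ for every~$i$, so that $g(\A') = \A$. This yields the desired linear isomorphism of arrangements. The degenerate cases $r \in \{-2,-1\}$ need no work: up to linear isomorphism there is only one central arrangement of $\AA^2$ with zero lines and only one with a single line, and in any case the number of lines in $\A$ is an isomorphism invariant of $\D(\A)$, being recoverable from $\dim\HH^1(\D(\A))$.

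The only real subtlety is the passage from the set-theoretic statement ``$\phi$ permutes irreducible normal elements up to scalars'' to the \emph{linear} statement ``$\phi$ restricts to an isomorphism $S_1 \to S'_1$''; this gap closes automatically as soon as one knows that the linear factors of $Q$ span $S_1$, which, as noted above, is the case whenever $\A$ has at least two lines. Once this is in place the argument proceeds by pure linear algebra, and all the substantive algebraic content is concentrated in the earlier identification of the set of normal elements of $\D(\A)$.
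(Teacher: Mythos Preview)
Your proof is correct and follows essentially the same route as the paper's: both arguments hinge on the preceding proposition's description of the normal elements of~$\D(\A)$, use it to match up the linear factors of~$Q$ with those of~$Q'$ via~$\phi$, and then observe that since those factors span~$S_1$ (once there are at least two lines) the isomorphism~$\phi$ must restrict to a linear map $S_1\to S'_1$ carrying one arrangement to the other. The one difference is that the paper first invokes Proposition~\ref{prop:local-ad-nilp} to conclude that $\phi(S)=S'$ before turning to normal elements, whereas you go directly to the normal elements; your observation that this preliminary step is unnecessary is correct, since the conclusion $\phi(S_1)=S'_1$ already follows from the normal-element argument alone, and you also handle the degenerate cases $r\in\{-2,-1\}$ explicitly where the paper is tacit.
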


\begin{proof}
The sufficiency of the condition being obvious, we prove only its
necessity. We will denote with primes the objects associated to the
arrangement~$\A'$, so that for example $A'=D(\A')$ and so on. Moreover, in
view of the sufficiency of the condition we
can suppose without loss of generality that both arrangements $\A$
and~$\A'$ contain the line with equation $x=0$.

Let us suppose that there is an isomorphism of algebras $\phi:A\to A'$.
Since $\phi$ maps locally $\ad$-nilpotent elements to locally
$\ad$-nilpotent elements, it follows from
Proposition~\pref{prop:local-ad-nilp} that $\phi(S)=S'$ and therefore that
$\phi$ restricts to an isomorphism of algebras $\phi:S\to S'$. On the other
hand, $\phi$ also maps normal elements to normal elements, so that $\phi$
restricts to a monoid homomorphism $\phi:\N(A)\to\N(A')$. 
Let $Q=\alpha_0\cdots\alpha_{r+1}$ and $Q'=\alpha'_0\cdots\alpha'_{r'+1}$ be
the factorizations of~$Q$ and of~$Q'$ as products of linear factors.
The invertible elements of the monoid~$\N(A)$ are the units of~$\kk$ and
the quotient~$\N(A)/\kk^\times$ is the free abelian monoid generated by
(the classes of) $\alpha_0$,~\dots,~$\alpha_{r+1}$ and, of course, a similar
statement holds for the other arrangement. Since $\phi$ induces an
isomorphism $\N(A)/\kk^\times\to\N(A')/\kk^\times$ we see, first, that $r=r'$
and, second, that there are a permutation~$\pi$ of the set~$\{0,\dots,r+1\}$
and a function $\lambda:\{0,\dots,r+1\}\to\kk^\times$ such that
$\phi(\alpha_i)=\lambda(i)\alpha'_{\pi(i)}$ for all $i\in\{0,\dots,r+1\}$.
As there are at least two lines in each arrangement, this implies that the
restriction $\phi|_S:S\to S'$ restricts to an isomorphism of vector spaces
$\phi:S_1\to S'_1$, so that $\phi|_S$ is linear, and that $\phi(Q)=Q'$.
It is clear that this implies that the arrangements~$\A$ and~$\A'$ are
isomorphic.
\end{proof}

\paragraph A simple and final observation that we can make at this point is that our
algebra~$A$ and the full algebra~$\D(S)$ of regular differentials operators
of~$S$ are birational, that is, that they have the same fields of
quotients. In fact, the two algebras become isomorphic already after localization
at a single element:

\begin{Proposition*}\plabel{prop:birational}
The inclusion $A\to\D(S)$ induces after localization at~$Q$ an isomorphism
$A[\frac{1}{Q}]\to\D(S)[\frac{1}{Q}]$ and, in particular, $A$ and $\D(S)$
have isomorphic fields of fractions.
\end{Proposition*}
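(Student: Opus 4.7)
The plan is to exhibit the partial derivatives $\partial_x,\partial_y\in\D(S)$ as elements of $A[\tfrac1Q]$ via the explicit formulas derived from $E=x\partial_x+y\partial_y$ and $D=F\partial_y$, together with $Q=xF$. Since $Q$ is normal in~$A$ by the proposition on normal elements, the Ore localization $A[\tfrac1Q]$ is well-defined. For the Weyl algebra~$\D(S)$, the element~$Q$ is not normal, but the multiplicative set $\{Q^n:n\geq0\}$ satisfies the left and right Ore conditions: this is a standard fact for the Weyl algebra and can be checked on the generators using the commutation relations $[\partial_x,Q^n]=nQ^{n-1}Q_x$ and $[\partial_y,Q^n]=nQ^{n-1}Q_y$, so that $\D(S)[\tfrac1Q]$ exists as an Ore localization. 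The universal property of the localization then gives a canonical algebra homomorphism $\iota:A[\tfrac1Q]\to\D(S)[\tfrac1Q]$ extending the inclusion $A\hookrightarrow\D(S)$.

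Next I would establish injectivity of~$\iota$: the inclusion $A\subseteq\D(S)$ is an embedding of Noetherian domains, $Q$ is a regular element in both algebras, and a standard general fact says that localization at an Ore set preserves injectivity of subring inclusions, provided the Ore set on the smaller side maps into the Ore set on the larger side.

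For surjectivity, the key computation is performed inside $\D(S)[\tfrac1Q]$, where $Q=xF$ is invertible and so both $x$ and $F$ are invertible. A direct calculation gives
\[
Q^{-1}xD = Q^{-1}xF\partial_y = \partial_y,
\qquad
Q^{-1}(FE - yD) = Q^{-1}(xF\partial_x + yF\partial_y - yF\partial_y) = \partial_x.
\]
Since $xD$ and $FE-yD$ lie in $A$, the elements $\partial_x$ and $\partial_y$ lie in the image of $\iota$. Combined with $x,y\in A$ and $Q^{-1}\in A[\tfrac1Q]$, and the fact that $x$, $y$, $\partial_x$, $\partial_y$ generate $\D(S)$ as an algebra, this shows the image of~$\iota$ contains $\D(S)$ and therefore all of $\D(S)[\tfrac1Q]$.

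The final assertion about fields of fractions follows formally: for any Noetherian domain, localization at a multiplicative set of regular elements preserves the total ring of fractions, so $\operatorname{Frac}(A)=\operatorname{Frac}(A[\tfrac1Q])=\operatorname{Frac}(\D(S)[\tfrac1Q])=\operatorname{Frac}(\D(S))$. The only subtle point in the argument is the verification of the Ore condition for~$Q$ in~$\D(S)$; once this is in hand, the rest is an essentially mechanical manipulation of the defining identities $E=x\partial_x+y\partial_y$ and $D=F\partial_y$.
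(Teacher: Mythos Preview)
Your proof is correct and follows essentially the same approach as the paper's: the paper's proof is a two-line argument observing that the induced map is injective and that $\partial_y=\tfrac{1}{F}D$ and $\partial_x=\tfrac{1}{x}E-\tfrac{y}{Q}D$ lie in the image, which is the same computation you carry out with the cosmetically different formulas $\partial_y=Q^{-1}(xD)$ and $\partial_x=Q^{-1}(FE-yD)$. You supply more detail than the paper on the existence of the localizations and on the passage to fraction fields, which the paper dispatches by a reference to \cite{MR}.
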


That both localizations actually exist follows from the usual
characterization of quotient rings; see, for example, \cite{MR}*{Chapter 2}.

\begin{proof}
Clearly the map $A[\frac{1}{Q}]\to\D(S)[\frac{1}{Q}]$ induced by the
inclusion is injective, and it is surjective since $S$ is contained in its
image as are $\partial_y=\tfrac{1}{F}D$ and $\partial_x=\tfrac{1}{x}E-\tfrac{y}{Q}D$.
\end{proof}

\begin{bibdiv}
\begin{biblist}

\bib{AC}{article}{
   author={Alev, J.},
   author={Chamarie, M.},
   title={D\'erivations et automorphismes de quelques alg\`ebres quantiques},
   journal={Comm. Algebra},
   volume={20},
   date={1992},
   number={6},
   pages={1787--1802},
   issn={0092-7872},
   review={\MR{1162608 (93f:16036)}},
   doi={10.1080/00927879208824431},
}
\bib{Block}{article}{
   author={Block, Jonathan L.},
   title={Cyclic homology of filtered algebras},
   journal={$K$-Theory},
   volume={1},
   date={1987},
   number={5},
   pages={515--518},
   issn={0920-3036},
   review={\MR{934456}},
}
\bib{CM}{article}{
   author={Calder\'on-Moreno, Francisco Javier},
   title={Logarithmic differential operators and logarithmic de Rham
   complexes relative to a free divisor},
   language={English, with English and French summaries},
   journal={Ann. Sci. \'Ecole Norm. Sup. (4)},
   volume={32},
   date={1999},
   number={5},
   pages={701--714},
   issn={0012-9593},
   review={\MR{1710757}},
   doi={10.1016/S0012-9593(01)80004-5},
}
\bib{Dimca}{book}{
   author={Dimca, Alexandru},
   title={Hyperplane arrangements},
   series={Universitext},
   publisher={Springer, Cham},
   date={2017},
   pages={xii+200},
   isbn={978-3-319-56220-9},
   isbn={978-3-319-56221-6},
   review={\MR{3618796}},
   doi={10.1007/978-3-319-56221-6},
}
\bib{Freudenburg}{book}{
   author={Freudenburg, Gene},
   title={Algebraic theory of locally nilpotent derivations},
   series={Encyclopaedia of Mathematical Sciences},
   volume={136},
   note={Invariant Theory and Algebraic Transformation Groups, VII},
   publisher={Springer-Verlag, Berlin},
   date={2006},
   pages={xii+261},
   isbn={978-3-540-29521-1},
   isbn={3-540-29521-6},
   review={\MR{2259515 (2008f:13049)}},
}
\bib{Gerstenhaber}{article}{
    author = {Gerstenhaber, Murray},
     title = {The cohomology structure of an associative ring},
   journal = {Ann. of Math. (2)},
    volume = {78},
      year = {1963},
     pages = {267--288},
      issn = {0003-486x},
    review = {\MR{0161898}},
       doi = {10.2307/1970343},
       url = {http://dx.doi.org/10.2307/1970343},
}

\bib{Ginzburg}{article}{
  author={Ginzburg, Victor},
  title={Calabi-Yau algebras},
  date={2006},
  eprint={http://arxiv.org/abs/math/0612139},
}

\bib{Loday}{book}{
   author={Loday, Jean-Louis},
   title={Cyclic homology},
   series={Grundlehren der Mathematischen Wissenschaften},
   volume={301},
   edition={2},
   publisher={Springer-Verlag, Berlin},
   date={1998},
   pages={xx+513},
   isbn={3-540-63074-0},
   review={\MR{1600246}},
   doi={10.1007/978-3-662-11389-9},
}

\bib{MR}{book}{
   author={McConnell, J. C.},
   author={Robson, J. C.},
   title={Noncommutative Noetherian rings},
   series={Graduate Studies in Mathematics},
   volume={30},
   edition={Revised edition},
   note={With the cooperation of L. W. Small},
   publisher={American Mathematical Society, Providence, RI},
   date={2001},
   pages={xx+636},
   isbn={0-8218-2169-5},
   review={\MR{1811901 (2001i:16039)}},
}

\bib{OT}{book}{
   author={Orlik, Peter},
   author={Terao, Hiroaki},
   title={Arrangements of hyperplanes},
   series={Grundlehren der Mathematischen Wissenschaften},
   volume={300},
   publisher={Springer-Verlag, Berlin},
   date={1992},
   pages={xviii+325},
   isbn={3-540-55259-6},
   review={\MR{1217488}},
   doi={10.1007/978-3-662-02772-1},
}

\bib{Quillen}{article}{
   author={Quillen, Daniel},
   title={Higher algebraic $K$-theory. I},
   conference={
      title={Algebraic $K$-theory, I: Higher $K$-theories},
      address={Proc. Conf., Battelle Memorial Inst., Seattle, Wash.},
      date={1972},
   },
   book={
      publisher={Springer, Berlin},
   },
   date={1973},
   pages={85--147. Lecture Notes in Math., Vol. 341},
   review={\MR{0338129}},
}

\bib{Rinehart}{article}{
   author={Rinehart, George S.},
   title={Differential forms on general commutative algebras},
   journal={Trans. Amer. Math. Soc.},
   volume={108},
   date={1963},
   pages={195--222},
   issn={0002-9947},
   review={\MR{0154906}},
   doi={10.2307/1993603},
}
\bib{Saito}{article}{
   author={Saito, Kyoji},
   title={Theory of logarithmic differential forms and logarithmic vector
   fields},
   journal={J. Fac. Sci. Univ. Tokyo Sect. IA Math.},
   volume={27},
   date={1980},
   number={2},
   pages={265--291},
   issn={0040-8980},
   review={\MR{586450}},
}

\bib{Sridharan}{article}{
    author={Sridharan, Ramaiyengar},
    title={Filtered algebras and representations of Lie algebras},
    journal={Trans. Amer. Math. Soc.},
    volume={100},
    date={1961},
    pages={530--550},
    issn={0002-9947},
    review={\MR{0130900}},
}
\bib{SA:idealizer}{article}{
    author={Suárez-Álvarez, Mariano},
    title={The algebra of differential operators tangent to a hyperplane
    arrangement},
    eprint={https://arxiv.org/abs/1806.05410},
    date={2018},
    note={Preprint},
}

\bib{Terao}{article}{
   author={Terao, Hiroaki},
   title={Free arrangements of hyperplanes and unitary reflection groups},
   journal={Proc. Japan Acad. Ser. A Math. Sci.},
   volume={56},
   date={1980},
   number={8},
   pages={389--392},
   issn={0386-2194},
   review={\MR{596011}},
   doi={10.1007/BF01389197},
}

\bib{Weibel}{book}{
   author={Weibel, Charles A.},
   title={An introduction to homological algebra},
   series={Cambridge Studies in Advanced Mathematics},
   volume={38},
   publisher={Cambridge University Press, Cambridge},
   date={1994},
   pages={xiv+450},
   isbn={0-521-43500-5},
   isbn={0-521-55987-1},
   review={\MR{1269324}},
   doi={10.1017/CBO9781139644136},
}
\end{biblist}
\end{bibdiv}

\vspace{1cm}

\end{document}